\numberwithin{equation}{section}
\newtheorem{maintheorem}{Theorem}
\newtheorem{lemma}{Lemma}[section]
\newtheorem{proposition}[lemma]{Proposition}
\newtheorem{corollary}[lemma]{Corollary}
\newtheorem{defn}[lemma]{Definition}
\newtheorem*{defnx}{Definition}
\newtheorem{rem}[lemma]{Remark}
\newtheorem{theorem}[lemma]{Theorem}
\newtheorem{exm}[lemma]{Example}
\newenvironment{definition*}{\begin{defnx}\em}{\end{defnx}} 
\newenvironment{example}{\begin{exm}\em}{\end{exm}}
\newenvironment{remark}{\begin{rem}\em}{\end{rem}}
\newcommand\sections{\mathit\Gamma}
\newcommand\psections{\sections^{\mathrm{pr}}}
\newcommand\bisections{\sections^{\mathrm{bi}}}
\newcommand\spp{\varsigma}
\newcommand\tspp{\tau}
\newcommand\lcc{\operatorname{{\mathcal L}^{\vee}}}
\newcommand\lc{\operatorname{{\mathcal L}}}
\newcommand\V{\bigvee}
\newcommand\cf{cf.}
\newcommand\ipi{{\operatorname{\mathcal I}}}
\newcommand\st{\colon}
\newcommand\opens{\operatorname{\mathcal O}}
\newcommand\SL{\mathsf{SL}}
\newcommand\Sh{\mathsf{Sh}}
\newcommand\Mod{\mathsf{Mod}}
\newcommand\then{\mathop{\&}}
\newcommand\ident{\operatorname{id}}
\begin{document}

\title{Morita equivalence of pseudogroups}

\author{M. V. Lawson}
\address{Mark V. Lawson, 
Department of Mathematics
and the
Maxwell Institute for Mathematical Sciences, 
Heriot-Watt University,
Riccarton,
Edinburgh~EH14~4AS, UNITED KINGDOM}
\email{m.v.lawson@hw.ac.uk}

\author{P. Resende}
\address{Pedro Resende, 
Centre for Mathematical Analysis, Geometry, and Dynamical Systems,
Departamento de Matem\'atica,
Instituto Superior T\'ecnico,
Universidade de Lisboa, 
Av. Rovisco Pais,
1049-001 Lisboa, PORTUGAL}
\email{pedro.m.a.resende@tecnico.ulisboa.pt}

\thanks{The authors would like to thank the anonymous referee for their sterling work in reading the first draft of this paper.
The second author's research was funded by FCT/Portugal through project UIDB/04459/2020.}

\begin{abstract} We take advantage of the correspondence between pseudogroups and inverse quantal frames, and of the recent description of Morita equivalence for inverse quantal frames
in terms of biprincipal bisheaves, to define Morita equivalence for pseudogroups and to investigate its applications.
In particular, two pseudogroups are Morita equivalent if and only if their corresponding localic \'etale groupoids are.
We explore the clear analogies between our definition of Morita equivalence for pseudogroups and the
usual notion of strong Morita equivalence for $C^{\ast}$-algebras and these lead to a number of concrete results. 
\end{abstract}

\keywords{Inverse semigroups,  pseudogroups,  Morita equivalence, quantales, inverse quantal frames, \'etale groupoids}

\subjclass[2020]{18F75, 20M18, 22A22}

\maketitle

\tableofcontents

\section{Introduction}

A pseudogroup is a complete inverse semigroup in which multiplication distributes over compatible joins.
This is an abstract structure, whereas pseudogroups of transformations are concrete exemplars:
for example, the pseudogroup of all partial homeomorphisms between the open subsets of a topological space.
The path to the development of an {\em abstract} theory of pseudogroups has been a long and circuitous one;
see \cite{Lawson1998}, for further background.

The goal of this paper is to describe the Morita theory of pseudogroups.
Notice that there is a pre-existing theory of Morita equivalence for inverse semigroups \cite{Steinberg, FLS};
the theory we shall develop for pseudogroups is similar to but different from this one. In fact, both theories go
back to the Morita theory described by Rieffel for C*-algebras in  \cite{Rieffel1974, Rieffel1982},
with ours being a much closer analogue of the latter. 
Concretely, we shall rely on the fact that there are  two distinct philosophies in handling pseudogroups:
they can be regarded as a class of inverse semigroups, as we have described above,
or they can be connected to quantales --- specifically, inverse quantal frames  \cite{Resende2007}.
These two philosophies are related via a dictionary \cite[Th.~1.6]{Resende2007} in the form of
two functors that define an equivalence of categories: from the pseudogroup $S$ one can construct the inverse quantal frame
$\lcc(S)$, and from the inverse quantal frame $Q$ one can construct the pseudogroup
of partial units $Q_\ipi$. The goal of this paper is to describe the Morita
theory of pseudogroups within the inverse semigroup philosophy, for which we shall take advantage of the sheaf theory~\cite{Resende2012} of inverse quantal frames and their recently developed \cite{QR} 
Morita theory based on Hilsum--Skandalis maps.
We prove three main theorems, the first of which translates between the two philosophies:

\begin{maintheorem}\label{theoremA}
Let $S$ and $T$ be pseudgroups. The following conditions are equivalent:
\begin{enumerate}
\item $S$ and $T$ are Morita equivalent pseudogroups.
\item $\lcc(S)$ and $\lcc(T)$ are Morita equivalent inverse quantal frames.
\item $\mathsf{G}(S)$ and $\mathsf{G}(T)$ are Morita equivalent localic \'etale groupoids.
\end{enumerate}
\end{maintheorem}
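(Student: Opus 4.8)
The plan is to establish the three-way equivalence by proving $(1)\Leftrightarrow(2)$ and $(2)\Leftrightarrow(3)$ separately, with the dictionary between pseudogroups and inverse quantal frames as the backbone throughout. The conceptual engine is the equivalence of categories of \cite[Th.~1.6]{Resende2007}, given by the mutually quasi-inverse functors $S\mapsto\lcc(S)$ and $Q\mapsto Q_\ipi$. Since Morita equivalence on each of the three sides is witnessed by an appropriate biprincipal structure — a biprincipal bisheaf over the inverse quantal frames, its pseudogroup-side analogue, and a principal bibundle for the groupoids — the whole argument reduces to transporting this witnessing datum back and forth across the two correspondences and checking that the defining conditions survive the translation.

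For $(1)\Leftrightarrow(2)$, I would argue that $\lcc$ and $(-)_\ipi$ lift from an equivalence of ordinary categories to an equivalence of the associated bicategories whose $1$-cells are the (bi)modules defining Morita contexts. First I would show that $\lcc$ sends a biaction over the pair $(S,T)$ to a bisheaf over $(\lcc(S),\lcc(T))$, via the same join-completion that produces $\lcc(S)$ from $S$; conversely, restricting a bisheaf to its partial units recovers a pseudogroup biaction, and these two passages are mutually inverse up to canonical isomorphism because $\lcc$ is an equivalence. The substantive step is then to verify that the biprincipal conditions correspond: left and right principality, together with the non-degeneracy requirements, must be shown to hold for the completed bisheaf exactly when they hold for the original pseudogroup biaction. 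This is where most of the work lies, since principality is phrased in terms of joins that live in the quantale but not in the pseudogroup, so one must track carefully how the biaction interacts with the completion.

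For $(2)\Leftrightarrow(3)$, I would invoke the correspondence between inverse quantal frames and localic \'etale groupoids from \cite{Resende2007,Resende2012}, under which $\lcc(S)$ corresponds to the localic \'etale groupoid $\mathsf{G}(S)$, and then appeal to the Morita theory of \cite{QR}: biprincipal bisheaves between inverse quantal frames are precisely the algebraic shadow of the principal bibundles (invertible Hilsum--Skandalis maps) that witness Morita equivalence of groupoids. Concretely, the data of a biprincipal bisheaf over $(\lcc(S),\lcc(T))$ translate into a biprincipal bibundle $\mathsf{G}(S)\leftarrow X\rightarrow\mathsf{G}(T)$, and conversely, so that Morita equivalence transfers in both directions essentially by unwinding definitions already available in \cite{QR}.

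I expect the main obstacle to be the $(1)\Leftrightarrow(2)$ direction, and within it the preservation of the biprincipal conditions under join-completion. The delicate point is that a pseudogroup is not closed under the joins needed to state principality, so the argument cannot be a purely formal transport of structure; one has to prove that completing a pseudogroup biaction produces exactly the bisheaves arising in \cite{QR}, and that no biprincipal bisheaf over $(\lcc(S),\lcc(T))$ is lost upon restriction to partial units. Once this compatibility is secured, the three-way equivalence follows by concatenating the two biconditionals.
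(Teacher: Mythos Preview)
Your route diverges from the paper's and, as written, contains a circularity.

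Recall the paper's \emph{definitions}: $S$ and $T$ are Morita equivalent pseudogroups when the categories $S\text{-}\Mod$ and $T\text{-}\Mod$ are equivalent; inverse quantal frames $Q$ and $R$ are Morita equivalent when $Q\text{-}\Sh$ and $R\text{-}\Sh$ are equivalent; \'etale groupoids are Morita equivalent when their classifying toposes are. The paper proves $(1)\Leftrightarrow(2)$ by establishing, for each pseudogroup $S$, an explicit equivalence of categories $S\text{-}\Mod \simeq \lcc(S)\text{-}\Sh$ (Theorem~\ref{them:one}), via an adjoint pair consisting of a local-sections functor $\Theta$ and a join-completion functor $\lcc(-)$ whose unit and counit are shown to be isomorphisms. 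Once this is in hand, $(1)\Leftrightarrow(2)$ is immediate from the definitions. For $(2)\Leftrightarrow(3)$ the paper simply invokes the isomorphism $\opens(G)\text{-}\Sh\cong\mathsf{B}G$ from \cite{Resende2012} (Corollary~\ref{cor:morita}); no bibundles enter.

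Your proposal instead transports \emph{witnesses}: you plan to show that $(S,T)$-equivalence bimodules correspond, under $\lcc$ and $(-)_\ipi$, to biprincipal $\lcc(S)$-$\lcc(T)$-bisheaves. This correspondence is correct and is indeed carried out in the paper --- but as Lemmas~\ref{lem:biprincipal1} and~\ref{lem:biprincipal2}, in the service of Theorem~\ref{theoremB}, not Theorem~\ref{theoremA}. The gap is that you never connect ``existence of an $(S,T)$-equivalence bimodule'' to the actual defining condition ``$S\text{-}\Mod\simeq T\text{-}\Mod$''. In the paper that connection is precisely Theorem~\ref{theoremB}, and its proof \emph{uses} Theorem~\ref{theoremA} together with Theorem~\ref{thm:QRbiprincipal} on the quantale side. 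So your argument for $(1)\Leftrightarrow(2)$ tacitly assumes what is, in this development, a consequence of the very statement you are proving. To rescue your strategy you would need an independent proof that an equivalence bimodule induces an equivalence $S\text{-}\Mod\simeq T\text{-}\Mod$ and conversely --- say by a tensoring construction as in ring theory --- but you have not outlined one, and this is not the paper's route.
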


The second is analogous to a similar result in ring theory:

\begin{maintheorem}\label{theoremB}
Let $S$ and $T$ be pseudogroups.
Then $S$ and $T$ are Morita equivalent if and only if there is an $(S,T)$-equivalence bimodule.
\end{maintheorem}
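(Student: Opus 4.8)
The plan is to deduce Theorem~\ref{theoremB} from Theorem~\ref{theoremA} by upgrading the object-level dictionary of \cite[Th.~1.6]{Resende2007} to a correspondence between \emph{bimodules} over pseudogroups and \emph{bisheaves} over the associated inverse quantal frames, and then matching the relevant equivalence conditions on the two sides. The backbone is the implication $(1)\Leftrightarrow(2)$ of Theorem~\ref{theoremA} together with the characterisation of Morita equivalence of inverse quantal frames by biprincipal bisheaves from \cite{QR}: it then suffices to show that $(S,T)$-equivalence bimodules are exactly the pseudogroup-level shadows of such bisheaves.

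First I would pin down the notion of $(S,T)$-equivalence bimodule as the inverse-semigroup analogue of a Rieffel imprimitivity bimodule \cite{Rieffel1982}: a join-complete object $M$ carrying commuting left $S$- and right $T$-actions by partial maps that preserve compatible joins, equipped with an $S$-valued pairing $\langle-,-\rangle_S$ and a $T$-valued pairing $\langle-,-\rangle_T$ satisfying the linking axiom $\langle x,y\rangle_S\cdot z = x\cdot\langle y,z\rangle_T$. The \emph{equivalence} requirement is fullness of both pairings, i.e.\ their images join-generate $S$ and $T$ respectively, together with the associated non-degeneracy condition. These are precisely the conditions that, at the quantale level, turn $M$ into an imprimitivity bimodule.

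The heart of the argument is the correspondence itself. From an $(S,T)$-equivalence bimodule $M$ one forms its join-completion $\lcc(M)$ and checks that it is a $(\lcc(S),\lcc(T))$-bimodule in the sup-lattice sense, the two pairings supplying the structure maps; passing through the sheaf theory of \cite{Resende2012} this is a bisheaf carrying commuting $\mathsf{G}(S)$- and $\mathsf{G}(T)$-actions. Conversely, the partial units of a bisheaf recover a bimodule, exactly as $Q\mapsto Q_\ipi$ recovers the pseudogroup from the inverse quantal frame. The key point, and where I expect the main obstacle to lie, is showing that fullness of the two pairings translates precisely into \emph{biprincipality} (local freeness and transitivity) of the two groupoid actions, so that equivalence bimodules correspond to biprincipal bisheaves; this requires converting join-theoretic generation statements into statements about local sections and their supports, and verifying that the join-completion and partial-units constructions are mutually inverse on the nose for the structures in question.

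With this correspondence established the theorem follows formally. If $S$ and $T$ are Morita equivalent then, by Theorem~\ref{theoremA}, $\lcc(S)$ and $\lcc(T)$ are Morita equivalent inverse quantal frames, so by \cite{QR} there is a biprincipal $(\lcc(S),\lcc(T))$-bisheaf, and its partial units form an $(S,T)$-equivalence bimodule. Conversely, an $(S,T)$-equivalence bimodule yields a biprincipal bisheaf via $\lcc(-)$, whence $\lcc(S)$ and $\lcc(T)$ are Morita equivalent and Theorem~\ref{theoremA} returns the Morita equivalence of $S$ and $T$. As a sanity check echoing the ring-theoretic case, I would also note that tensoring with such a bimodule directly implements the equivalence of the relevant module categories, giving a dictionary-free proof of the backward implication.
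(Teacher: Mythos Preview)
Your plan is correct and matches the paper's approach almost exactly: both directions go through Theorem~\ref{theoremA} and the characterisation of Morita equivalence of inverse quantal frames by biprincipal bisheaves from \cite{QR}, with the two bridging lemmas being (i) that $\lcc(-)$ sends an $(S,T)$-equivalence bimodule to a biprincipal $\lcc(S)$-$\lcc(T)$-bisheaf, and (ii) that the local bisections of a biprincipal bisheaf form an equivalence bimodule. One terminological point: what you call ``partial units of a bisheaf'' are what the paper calls \emph{local bisections} $\bisections_\Xi$ (elements that are local sections for both sheaf structures); the phrase ``partial units'' is reserved for elements of a quantale, and a bisheaf is not itself a quantale, so you should keep these distinct. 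Also, the paper does not need the constructions to be mutually inverse --- the two one-way lemmas suffice --- and the tensoring sanity check you mention is not carried out there.
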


The third is in the spirit of the characterization of Morita equivalence in inverse semigroup theory \cite{FLS}:

\begin{maintheorem}\label{theoremC}
Let $S$ and $T$ be pseudogroups.
Then there is an $(S,T)$-equivalence bimodule
if and only if  $S$ and $T$ have a joint sup-enlargement.
\end{maintheorem}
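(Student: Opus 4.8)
The plan is to prove Theorem C by establishing the two directions of the equivalence, translating everything into the language of inverse quantal frames via the dictionary of \cite{Resende2007} so as to reuse the Morita theory of \cite{QR}. By Theorem~B, having an $(S,T)$-equivalence bimodule is equivalent to $S$ and $T$ being Morita equivalent, and by Theorem~A this is in turn equivalent to the inverse quantal frames $\lcc(S)$ and $\lcc(T)$ being Morita equivalent, i.e.\ to the existence of a biprincipal bisheaf connecting them. So the real content is to show that such a bisheaf-theoretic gluing of $\lcc(S)$ and $\lcc(T)$ corresponds precisely to a pseudogroup $U$ containing isomorphic copies of $S$ and $T$ as suitably ``ample'' or ``full'' sub-pseudogroups — this $U$ being what we should define as a \emph{joint sup-enlargement}. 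The strategy is therefore to carry out the construction in both directions and check that they are mutually inverse up to the appropriate notion of equivalence.

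First I would fix the definition of joint sup-enlargement: a pseudogroup $U$ together with embeddings of $S$ and $T$ as sub-pseudogroups whose idempotent parts generate $U$ in the join-completion sense, with $S$ and $T$ each being ``wide'' enough that the restriction/induction of elements of $U$ recovers all of $S$ and of $T$. The analogy to keep in mind is the $C^{\ast}$-linking algebra: an $(S,T)$-equivalence bimodule should assemble, together with $S$ and $T$ themselves, into a single larger pseudogroup $U$ playing the role of the linking algebra, in which $S$ and $T$ sit as the two ``corner'' pseudogroups cut out by complementary idempotents (or complementary bi-ideals) whose join is the top. I would make this precise by taking the bimodule $M$ from Theorem~B and forming $U$ as a set-theoretic union $S \sqcup M \sqcup M^{\ast} \sqcup T$ with multiplication defined by the bimodule actions and the inner products, then verifying that this is a complete inverse semigroup in which multiplication distributes over compatible joins.

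Concretely the steps are: (i) starting from an $(S,T)$-equivalence bimodule, construct the candidate linking pseudogroup $U$ and check the pseudogroup axioms (associativity follows from the bimodule axioms and associativity of the two inner products; completeness and the infinite distributivity law are the points requiring care, since joins in $M$ must interact correctly with the actions); (ii) verify that $S$ and $T$ embed into $U$ and that $U$ is a joint sup-enlargement in the precise sense defined, which amounts to checking that the two ``corners'' generate $U$ and that $S$, $T$ are recovered as the appropriate corners; (iii) conversely, given a joint sup-enlargement $U$ of $S$ and $T$, extract a bimodule by taking the ``off-diagonal corner'' of $U$ — the elements of $U$ whose domain idempotent lies under the $T$-corner and whose range idempotent lies under the $S$-corner — and equip it with actions and inner products induced by multiplication in $U$, then check it satisfies the $(S,T)$-equivalence bimodule axioms; (iv) confirm these two constructions are mutually inverse.

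The hard part will be step~(i), and specifically checking the \emph{completeness} and \emph{infinite distributivity} conditions of the pseudogroup structure on $U$, since these are exactly the axioms that distinguish pseudogroups from ordinary inverse semigroups and they are where the abstract inverse-semigroup picture is least transparent. My expectation is that the cleanest way to discharge this obstacle is \emph{not} to verify the distributive laws by hand on $U$, but to translate: apply $\lcc(-)$ and show that $\lcc(U)$ is exactly the linking inverse quantal frame associated to the biprincipal bisheaf furnished by \cite{QR}, so that the completeness and distributivity of $U$ follow from the frame and quantale axioms on the $\lcc$ side, where joins and multiplication are governed by sup-lattice and tensor structure. In other words, the most delicate pseudogroup-theoretic bookkeeping should be offloaded onto the quantale dictionary, with Theorems~A and~B reducing the claim to a statement about biprincipal bisheaves that the machinery of \cite{QR} already controls; the remaining work is then the essentially formal but lengthy verification that the linking construction on the pseudogroup side and the linking construction on the quantale side correspond under $\lcc(-)$.
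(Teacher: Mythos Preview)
Your overall architecture matches the paper's: the direction from enlargement to bimodule is exactly the off-diagonal corner $X=eUf$ with $\langle x,y\rangle=xy^{-1}$ and $[x,y]=x^{-1}y$, and the direction from bimodule to enlargement is the linking construction $U=\left(\begin{smallmatrix}S&X\\\overline X&T\end{smallmatrix}\right)$ with matrix multiplication built from the actions and inner products (the paper imposes ``generalized rook conditions'' on the entries to make this an inverse monoid). So your steps (ii)--(iv) are on target, and step (i) identifies the genuine difficulty correctly.

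However, your plan for discharging step (i) has a circularity. You propose to ``apply $\lcc(-)$ and show that $\lcc(U)$ is exactly the linking inverse quantal frame,'' and conclude that the completeness and distributivity of $U$ follow. But $\lcc$ is only defined on pseudogroups: you cannot form $\lcc(U)$ until you already know $U$ has compatible joins and infinite distributivity, which is precisely what you are trying to establish. A viable quantale-side route would be to \emph{first} build a linking inverse quantal frame $Q$ from the biprincipal bisheaf (independently of $U$), then take $U:=Q_\ipi$ and verify it is a joint sup-enlargement; but \cite{QR} does not hand you that construction ready-made, so you would still owe a nontrivial argument. The paper sidesteps this entirely by verifying everything on $U$ by hand: well-definedness of the matrix product (the joins exist because the rook conditions force orthogonality), associativity (eight terms per entry, matched using Proposition~\ref{lem:biaction-action}), the inverse-monoid structure, closure under compatible joins, and distributivity --- all checked directly from the bimodule axioms, with no appeal to the quantale dictionary. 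It is laborious but elementary, and it avoids the bootstrapping problem in your outline. You should also pin down the definition of sup-enlargement precisely (the paper uses $S=eTe$ with $S=STS$ and $T=(TST)^{\vee}$), since your informal description does not yet commit to the conditions you will actually verify.
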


\begin{remark}
In this paper, $\mathsf{G}(S)$ will denote the {\em localic} \'etale groupoid associated with the pseudogroup $S$.
In the case where $S$ is spatial, this is the same as the groupoid of completely prime filters studied in \cite{LL}.
\end{remark}

We shall explore the theory begun here for the case of coherent pseudogroups in a follow-up paper.

\section{Background}

For standard results on inverse semigroups, we refer the reader to \cite{Lawson1998}.
We recall some important definitions here.
If $S$ is any semigroup and $I$ is a subset, we say that $I$ is a {\em (semigroup) ideal} if $SI, IS \subseteq S$.
The set of idempotents of $S$ is denoted by $\mathsf{E}(S)$.
Recall that any two idempotents in an inverse semigroup commute;
we shall use this fact without comment many times in the sequel.
In an inverse semigroup $S$, the idempotent $s^{-1}s$ is denoted by $\mathbf{d}(s)$ and the idempotent $ss^{-1}$ by $\mathbf{r}(s)$.
We write $e \stackrel{a}{\rightarrow} f$ if $e = \mathbf{d}(a)$ and $f = \mathbf{r}(a)$;
in this case, we also write $e \, \mathscr{D} \, f$.
A product of the form $ab$ where $\mathbf{d}(a) = \mathbf{r}(b)$ is called a {\em reduced product}.
If $ab$ is any product then $ab = a'b'$ where $a'b'$ is a reduced product;
simply put $a' = a\mathbf{r}(b)$ and $b' = \mathbf{d}(a)b$.
The natural partial order on an inverse semigroup is denoted by $\leq$.
We define $s \sim_{l} t$ if $s^{-1}t$ is an idempotent and say that $s$ and $t$ are {\em left compatible};
this is equivalent to $\mathbf{r}(s)t = \mathbf{r}(t)s$.
We define $s \sim_{r} t$ if $st^{-1}$ is an idempotent and say that $s$ and $t$ are {\em right compatible};
this is equivalent to $t\mathbf{d}(s) = s\mathbf{d}(t)$.\footnote{Beware! We have changed parity compared to the definitions in \cite{Lawson1998}.}
We write $s \sim t$ and say they are {\em compatible} if they are both left and right compatible.
A subset is compatible if it is empty or, being non-empty, each pair of elements in it is compatible.
We now come to the main class of semigroups studied in this paper.

\begin{definition*}
An inverse semigroup $S$ is called a {\em pseudogroup} if every compatible subset (recall that the join of the empty set is $0$) 
has a join and multiplication distributes over all such joins.
This means that if $A$ is any compatible subset of $S$ then $\bigvee_{a\in A} a$ exists and for any $s \in S$ we have that
$$s \bigl( \bigvee_{a\in A} a  \bigr) = \bigvee_{a\in A} sa
\quad\text{ and }\quad
\bigl( \bigvee_{a\in A} a  \bigr)s = \bigvee_{a\in A} as.$$
You can easily check that if $a \sim b$ then $sa \sim sb$ and
$as \sim bs$.
Thus the joins on the right-hand sides of the above two equations are actually defined.
\end{definition*}

If $S$ is a pseudogroup then $\mathsf{E}(S)$ is a frame;
that is, a complete infinitely distributive lattice.
For more information on frames, see \cite{J,PP}.
We say that $S$ is {\em spatial} if $\mathsf{E}(S)$ is spatial.
Observe that pseudogroups automatically have all binary meets \cite{Resende2006}.
The identity of the pseudogroup $S$ will be denoted by $e_{S}$.
It is the top element of $\mathsf{E}(S)$.
We use the word {\em sup} and the notation $\bigvee$ for arbitrary joins.
If $X \subseteq S$, where $S$ is a pseudogroup, 
then we denote by $X^{\V}$ the set of all joins of compatible subsets of $X$.

Let $(P,\leq)$ be a partially ordered set or, simply, poset.
If $X \subseteq P$ define
$$X^{\uparrow} = \{y \in P \st x \leq y \text{ for some } x \in X\}$$
and
$$X^{\downarrow} = \{y \in P \st y \leq x \text{ for some } x \in X\}.$$
If $X = \{x\}$ we write $x^{\uparrow}$ and $x^{\downarrow}$ instead of $\{x\}^{\uparrow}$ and $\{x\}^{\downarrow}$, respectively.
If $X = X^{\downarrow}$ we say that $X$ is an {\em order-ideal} or {\em downward closed}.
An order-preserving map between posets is said to be {\em monotone}.

\section{Pseudogroup modules}\label{sec:pseudogroupmodules}

Often, two structures $\mathcal{A}$ and $\mathcal{B}$ are said to be Morita equivalent if the category of left modules of $\mathcal{A}$
is equivalent to the category of left modules of $\mathcal{B}$. A good account of the Morita theory of rings can be found in \cite{AF}.
The goal of this section is to define what we mean by a left module of a pseudogroup in order that we may define when two pseudogroups are Morita equivalent.

\subsection{Supported actions}

The definitions of semigroup action and monoid action are well-known, and here we shall work with left actions.
The definitions for right actions are made in the obvious way.

Let $S$ be an inverse semigroup.
Recall that a left {\em $S$-action} is defined by a semigroup action $S \times X \rightarrow X$ on a set $X$.
We shall need the concept of a `supported' left action of $S$.\footnote{This is exactly what Steinberg calls an \'etale action \cite{Steinberg}.}
A left $S$-action on a set $X$ is said to be {\em supported}, with support $p \colon X \rightarrow \mathsf{E}(S)$,
if the following two conditions hold:
\begin{enumerate}

\item $p(x)x = x$ for all $x \in X$.

\item $p(sx) = sp(x)s^{-1}$ for all $x \in X$ and $s \in S$.

\end{enumerate}
Note that if $s\in E(S)$ then we have $p(sx)=sp(x)$, so $p$ is $E(S)$-equivariant.

Let $(X,p)$ and $(Y,q)$ be two supported left $S$-actions.
A {\em homomorphism} $\theta$ from $(X,p)$ to $(Y,q)$ is a function
$\theta \colon X \rightarrow Y$ that satisfies two conditions:
\begin{enumerate} 

\item $\theta (sx) = s \theta (x)$ for all $s \in S$ and $x \in X$.

\item $q(\theta (x)) = p(x)$.

\end{enumerate}
We will use standard results about supported actions proved in \cite{Steinberg}.
In particular, we may define an order on $X$ by $x \leq y$ if and only if $x = ey$ for some $e \in \mathsf{E}(S)$.
It is routine to check that, in fact, $x \leq y$ if and only if $x = p(x)y$, and that if $S$ is a monoid the action is necessarily unital.
This order is compatible with the action so that if $x \leq y$ then $sx \leq sy$ and $p \colon X \rightarrow \mathsf{E}(S)$ 
is order-preserving \cite[Prop.~3.2]{Steinberg}.

Let $(X,p)$ be a supported action of the pseudogroup $S$.
In what follows, notation originally defined for pseudogroups is also defined for their actions.
We say that $x, y \in X$ are {\em left compatible}, denoted by $x \sim_{l} y$, if $p(x)y = p(y)x$.
Observe that if $x$ and $y$ have an upper bound they are left compatible \cite[Lemma~3.4]{Steinberg}.

\begin{remark}
If we work with right actions then we have the analogous definition of `right compatible.'
\end{remark}

\begin{lemma}\label{lem:day} Let $X$ be a supported left $S$-action.
\begin{enumerate}
\item If  $x \sim_{l} y$ then $sx \sim_{l} sy$ for any $s \in S$.
\item If $s \sim_{l} t$ then $sx \sim_{l} tx$ for any $x \in X$.
\end{enumerate}
\end{lemma}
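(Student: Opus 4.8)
The plan is to prove both parts by direct computation, in each case reducing the desired compatibility to the given one by rewriting supports via axiom~(2) of a supported action, $p(sx) = s\,p(x)\,s^{-1}$, and exploiting that idempotents commute. No use of joins or of the pseudogroup distributivity law is needed, so the statement really holds for any supported action of an inverse semigroup; I would flag this at the end.

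For part~(1), I would unwind $sx \sim_{l} sy$ to the equation $p(sx)(sy) = p(sy)(sx)$ in $X$. Substituting $p(sx) = s\,p(x)\,s^{-1}$ and using the action axiom $a(b x) = (ab)x$ to contract the semigroup factors, the left-hand side becomes $\bigl(s\,p(x)\,s^{-1}s\bigr)y = \bigl(s\,p(x)\,\mathbf{d}(s)\bigr)y$. The key simplification is that $p(x)$ and $\mathbf{d}(s) = s^{-1}s$ are commuting idempotents and $s\,\mathbf{d}(s) = ss^{-1}s = s$, so $s\,p(x)\,\mathbf{d}(s) = s\,\mathbf{d}(s)\,p(x) = s\,p(x)$, whence the left-hand side equals $s\,(p(x)y)$. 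Symmetrically the right-hand side equals $s\,(p(y)x)$. Since the hypothesis $x \sim_{l} y$ says precisely that $p(x)y = p(y)x$, applying $s$ to both sides gives the claim.

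For part~(2) I would again unwind $sx \sim_{l} tx$ to $p(sx)(tx) = p(tx)(sx)$, substitute the supports, and contract, reducing the goal to the purely semigroup-level identity $s\,p(x)\,s^{-1}t = t\,p(x)\,t^{-1}s$; once this is established, both sides act on $x$ and the two expressions coincide. The hypothesis $s \sim_{l} t$ means $s^{-1}t$ is idempotent, and since idempotents are self-inverse this forces $s^{-1}t = (s^{-1}t)^{-1} = t^{-1}s$; I would set $e$ equal to this common element. Rewriting and commuting the idempotents $p(x)$ and $e$, the left side becomes $s\,e\,p(x) = (ss^{-1}t)\,p(x) = \mathbf{r}(s)\,t\,p(x)$ and the right side becomes $\mathbf{r}(t)\,s\,p(x)$. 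The equivalent form of left compatibility, $\mathbf{r}(s)t = \mathbf{r}(t)s$, then closes the argument.

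The computations are routine once the right rewriting is chosen, and I do not expect a genuine obstacle; the only point needing care is the bookkeeping of when a product of semigroup elements is being contracted through the action axiom $a(bx)=(ab)x$ versus when idempotents are being commuted. Concretely, one must keep in mind that $p(x)$, $\mathbf{d}(s)$ and $e = s^{-1}t$ all lie in $\mathsf{E}(S)$ and so commute freely, and that the identities $s\,\mathbf{d}(s) = s$ and $ss^{-1} = \mathbf{r}(s)$ are exactly what let the supports be absorbed. This is the step most likely to trip one up if the parity conventions for $\mathbf{d}$, $\mathbf{r}$ and for left compatibility are not tracked consistently.
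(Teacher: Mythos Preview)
Your proposal is correct and follows essentially the same approach as the paper: both arguments rewrite the supports via $p(sx)=s\,p(x)\,s^{-1}$, commute the relevant idempotents, and in part~(2) invoke that $s^{-1}t$ is an idempotent together with the equivalent form $\mathbf{r}(s)t=\mathbf{r}(t)s$ of left compatibility. The only cosmetic difference is that you isolate the semigroup identity $s\,p(x)\,s^{-1}t = t\,p(x)\,t^{-1}s$ before acting on $x$, whereas the paper carries the element $x$ through a single chain of equalities (and incidentally uses $p(x)x=x$ along the way, which your organization avoids).
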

\begin{proof} (1) We calculate $p(sy)sx$.
This equals
$$sp(y)s^{-1}sx = sp(y)x = sp(x)y = s(s^{-1}s)p(x)y = sp(x)s^{-1} sy = p(sx)sy,$$
using the fact that $p(x)y = p(y)x$.
Thus $sx$ and $sy$ are left compatible.

(2) We calculate $p(sx)tx$.
This equals $sp(x)s^{-1}tx = ss^{-1}tp(x)x = ss^{-1}tp(x)x = tt^{-1}sp(x)x = tp(x)t^{-1}sx = p(tx)sx$,
where we use the fact that since $s \sim_{l} t$ we have $ss^{-1}t = tt^{-1}s$ and $s^{-1}t$ is an idempotent.
It follows that $sx$ and $tx$ are left compatible.
\end{proof}

\subsection{Modules and Morita equivalence}\label{sec:modules}

Now, let $S$ be a pseudogroup.
We say that $X$ is a {\em left $S$-module} if the following conditions hold:
\begin{enumerate}
\item There is a supported left action of $S$ on the set $X$ with support $p \colon X \rightarrow \mathsf{E}(S)$.
\item Every left compatible subset of $X$ has a join; in particular, the set $X$ has a smallest element, also denoted by $0$,
which is the join of the empty set.
\item The action distributes over left compatible joins. (This makes sense by part (1) of Lemma~\ref{lem:day}).
Thus $s \left( \V_{i \in I} x_{i} \right) = \V_{i \in I} sx_{i}$.
\item If $\V_{i \in I} s_{i}$ exists then $\left( \V_{i \in I} s_{i} \right)x = \V_{i \in I} s_{i}x$.
(This makes sense by part (2) of  Lemma~\ref{lem:day}).
\end{enumerate}

Let us prove some important properties of left $S$-modules.

\begin{lemma}\label{lem:groucho} Let $S$ be a pseudogroup and let $X$ be a left $S$-module.
If $\V_{i \in I} x_{i}$ exists
then $p\left(\V_{i \in I} x_{i} \right) = \V_{i \in I} p(x_{i})$.
\end{lemma}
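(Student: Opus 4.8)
The plan is to establish the two inequalities $\V_{i\in I} p(x_i) \leq p\bigl(\V_{i\in I} x_i\bigr)$ and $p\bigl(\V_{i\in I} x_i\bigr) \leq \V_{i\in I} p(x_i)$ separately, after first checking that the right-hand join exists. Write $x = \V_{i\in I} x_i$. Since any two idempotents of an inverse semigroup commute, they are automatically left (and right) compatible, so $\{p(x_i)\}_{i\in I}$ is a compatible subset of $S$ and $e := \V_{i\in I} p(x_i)$ exists; equivalently, this is just a join taken in the frame $\mathsf{E}(S)$.

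For the first inequality, each $x_i \leq x$, and since $p \colon X \rightarrow \mathsf{E}(S)$ is order-preserving this gives $p(x_i) \leq p(x)$ for every $i$; taking the join yields $e \leq p(x)$.

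The crux is the reverse inequality, and the key step is to show that $ex = x$. Using the order characterization $x_i = p(x_i)x$ (valid because $x_i \leq x$) together with module axiom (4) applied to the join $e = \V_{i} p(x_i)$, I would compute
$$ex = \left(\V_{i} p(x_i)\right) x = \V_{i} p(x_i)x = \V_{i} x_i = x.$$
Once $ex = x$ is in hand, I apply the support axiom $p(sx) = sp(x)s^{-1}$ with $s = e$: since $e$ is idempotent we have $e^{-1} = e$, so $p(x) = p(ex) = e\,p(x)\,e = e\,p(x)$, whence $p(x) \leq e$. Combining the two inequalities gives $p(x) = e = \V_{i} p(x_i)$, as required.

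The main obstacle is the bookkeeping around axiom (4): one must verify that its hypothesis is genuinely met, i.e.\ that $\V_{i} p(x_i)$ exists so that the distribution law applies, and one must recall the characterization $x_i = p(x_i)x$ of the order so that $p(x_i)x$ collapses back to $x_i$. Everything else is routine once the identity $ex = x$ has been secured.
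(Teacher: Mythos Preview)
Your proof is correct and follows essentially the same approach as the paper: both establish $ex=x$ and then use the support axiom to deduce $p(x)\le e$. The only cosmetic difference is that the paper works with an arbitrary upper bound $e$ of $\{p(x_i)\}$ and distributes via axiom~(3) ($e\bigl(\V_i x_i\bigr)=\V_i ex_i$), whereas you take $e=\V_i p(x_i)$ directly and distribute via axiom~(4); both routes yield $ex=x$ immediately.
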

\begin{proof} Put $x = \V_{i \in I} x_{i}$.
Then $x_{i} \leq x$ for all $i \in I$ and so $\V_{i \in I} p(x_{i}) \leq p(x)$.
Suppose that $p(x_{i}) \leq e$ for all $i \in I$ where $e \in \mathsf{E}(S)$.
Then $ex = \V_{i \in I}ex_{i} = \V_{i \in I} x_{i} = x$.
Thus $p(ex) = p(x)$ and so $p(x) \leq e$.
It follows that $p(x) = \V_{i \in I} p(x_{i})$, as claimed.
\end{proof}

\begin{lemma}\label{lem:harpo} Let $S$ be a pseudogroup and let $X$ be a left $S$-module.
The poset $X$ has all binary meets.
\end{lemma}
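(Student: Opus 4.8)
The plan is to realise the meet of two elements $x,y\in X$ as the join of \emph{all} their common lower bounds, and then to verify that this join is itself a lower bound. Concretely, set
$$L = \{z \in X \st z \leq x \text{ and } z \leq y\},$$
and define the candidate meet to be $m := \V L$. The two things that need checking are that this join exists at all, and that $m \leq x$ and $m \leq y$; greatestness will then be immediate, since by construction every common lower bound lies in $L$ and hence below $m$. This mirrors the standard argument that a pseudogroup itself has binary meets (\cite{Resende2006}, invoked earlier in the excerpt).

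First I would establish existence of $m$. Any two elements $z_1,z_2\in L$ share the common upper bound $x$, and we have already observed (following \cite[Lemma~3.4]{Steinberg}) that elements of a supported action with a common upper bound are left compatible. Hence $L$ is a left compatible subset of $X$, and module axiom~(2) guarantees that $m = \V L$ exists. (Note $L$ is nonempty, as $0\in L$, though this is not even needed since the empty join is $0$.)

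The hard part will be showing that $m$ is a common lower bound, i.e.\ that the join of the $z\in L$ does not escape above $x$ or $y$. Here I would use the distributivity axiom~(4) together with the frame structure of $\mathsf{E}(S)$. For each $z\in L$ we may write $z = p(z)x$ and $z = p(z)y$, using the characterisation $z\leq x \iff z = p(z)x$. Put $e := \V_{z\in L} p(z)$; this join exists because $\mathsf{E}(S)$ is a frame (any family of idempotents is compatible), and it exists \emph{as a join in $S$}, which is exactly what is needed to apply axiom~(4). That axiom then gives
$$m = \V_{z\in L} z = \V_{z\in L} p(z)x = \Bigl(\V_{z\in L} p(z)\Bigr)x = ex,$$
and symmetrically $m = ey$. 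Since $e\in\mathsf{E}(S)$, both $ex\leq x$ and $ey\leq y$ hold immediately from the definition of the order on $X$. Thus $m$ is a common lower bound lying above every element of $L$, so $m = x\wedge y$. The whole argument rests on the interplay of the two distributivity axioms~(3)--(4) with the fact that $\mathsf{E}(S)$ is a frame; Lemma~\ref{lem:groucho} could alternatively be used to track $p(m)$, but the direct computation above is cleaner.
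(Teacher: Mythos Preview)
Your proof is correct and follows the same approach as the paper: define the candidate meet as the join of the set $x^{\downarrow}\cap y^{\downarrow}$ of common lower bounds. The paper's proof is considerably terser, simply asserting that the resulting join is the meet ``by construction'', whereas you supply the missing verification that $m\le x$ and $m\le y$ via axiom~(4) and the frame structure of $\mathsf{E}(S)$; this extra care is warranted, since it is not automatic that a join of lower bounds remains a lower bound.
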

\begin{proof} Let $x,y \in X$.
The set $x^{\downarrow} \cap y^{\downarrow}$ contains the smallest element of $X$ and is a set of left compatible elements.
Put $z$ equal to the join of the set  $x^{\downarrow} \cap y^{\downarrow}$.
Then $z = x \wedge y$, by construction.
\end{proof}

\begin{lemma}\label{lem:zeppo} Let $S$ be a pseudogroup and let $X$ be a left $S$-module.
Suppose that $\V_{i \in I} y_{i}$ exists and that $x \in X$.
Then $\V_{i \in I} x \wedge y_{i}$ exists and 
$$x \wedge \bigl( \V_{i \in I} y_{i} \bigr) = \V_{i \in I} x \wedge y_{i}.$$
\end{lemma}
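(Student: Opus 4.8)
The plan is to establish the stated identity by proving the two inequalities $\V_{i\in I}(x\wedge y_i)\leq x\wedge\bigl(\V_{i\in I}y_i\bigr)$ and $x\wedge\bigl(\V_{i\in I}y_i\bigr)\leq\V_{i\in I}(x\wedge y_i)$ separately, having first checked that the join on the right-hand side exists at all. For existence, I would observe that each $x\wedge y_i$ lies below $x$, so the family $\{x\wedge y_i\}_{i\in I}$ has the common upper bound $x$; by the observation that any two elements with an upper bound are left compatible, this family is left compatible, and hence its join exists by axiom~(2) in the definition of a left $S$-module.

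The first inequality is immediate: for each $i$ we have $x\wedge y_i\leq x$ and $x\wedge y_i\leq y_i\leq\V_{i\in I}y_i$, so $x\wedge\bigl(\V_{i\in I}y_i\bigr)$ is an upper bound of $\{x\wedge y_i\}_{i\in I}$, whence $\V_{i\in I}(x\wedge y_i)\leq x\wedge\bigl(\V_{i\in I}y_i\bigr)$. The substance of the argument is the reverse inequality, and here the key idea is to exploit the support map. I would write $w=x\wedge\bigl(\V_{i\in I}y_i\bigr)$ and $e=p(w)$. Since $w\leq\V_{i\in I}y_i$, the description of the order gives $w=e\bigl(\V_{i\in I}y_i\bigr)$, and distributing the action over this join via axiom~(3) yields $w=\V_{i\in I}ey_i$. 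It then suffices to show that each $ey_i$ is a lower bound of $x$ and $y_i$, for then $ey_i\leq x\wedge y_i$ and therefore $w=\V_{i\in I}ey_i\leq\V_{i\in I}(x\wedge y_i)$. On the one hand $ey_i\leq y_i$ because $e$ is idempotent; on the other hand, since the action is order-preserving and $y_i\leq\V_{i\in I}y_i$, we get $ey_i\leq e\bigl(\V_{i\in I}y_i\bigr)=w\leq x$. Combining the two inequalities gives the claimed equality.

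The hard part is the reverse inequality, and the one point demanding genuine care is the legitimacy of the distribution step $e\bigl(\V_{i\in I}y_i\bigr)=\V_{i\in I}ey_i$. This rests on the family $\{y_i\}_{i\in I}$ being left compatible, which holds because its join is assumed to exist, together with part~(1) of Lemma~\ref{lem:day}, which guarantees that $\{ey_i\}_{i\in I}$ is again left compatible so that axiom~(3) applies and the join $\V_{i\in I}ey_i$ is defined. Everything else reduces to routine use of the order characterisation $a\leq b\iff a=p(a)b$ and the fact that the support of an idempotent acting on an element produces a smaller element.
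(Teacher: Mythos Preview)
Your proof is correct, and it is actually cleaner than the paper's argument. Both arguments begin the same way: the family $\{x\wedge y_i\}$ is bounded above by $x$, hence left compatible, so its join exists; and the inequality $\V_{i\in I}(x\wedge y_i)\leq x\wedge\bigl(\V_{i\in I}y_i\bigr)$ is immediate. The difference lies in the reverse inequality. The paper first develops an explicit formula for binary meets, namely $u\wedge v=fu=fv$ where $f=\V\{g\le p(u)p(v): gu=gv\}$, and then computes with the idempotents $g$ (attached to $x\wedge y$ with $y=\V_i y_i$) and $g_i$ (attached to $x\wedge y_i$), proving the identity $g_i=p(y_i)g$ and unwinding to obtain $x\wedge y=\V_i g_iy_i=\V_i x\wedge y_i$.

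You bypass this machinery entirely by observing that $w=x\wedge\bigl(\V_i y_i\bigr)$ satisfies $w=p(w)\bigl(\V_i y_i\bigr)$ simply because $w\le\V_i y_i$, distributing the action of the single idempotent $e=p(w)$ over the join, and then bounding each term $ey_i$ above by both $y_i$ and $x$ via monotonicity of the action. The only point requiring care, which you correctly flag, is that distributivity applies because the existence of $\V_i y_i$ forces the $y_i$ to have a common upper bound and hence to be left compatible. Your route avoids the explicit meet formula altogether; the paper's route has the side benefit of recording that formula, but for the purposes of this lemma your argument is shorter and more transparent.
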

\begin{proof} We first obtain a more useful description of binary meets.
Let $u,v \in X$.
Define $f = \V \{ g \leq p(u)p(v) \st gu = gv \}$.
By construction $fu = fv \leq u,v$.
Suppose that $z \leq u,v$.
Then $z = p(z)u = p(z)v$.
Thus $p(z) \leq f$ and so $z \leq fu = fv$.
It follows that $u \wedge v = fu = fv$.

Observe that $x \wedge y_{i}$ are all bounded above by $x$ and so are left compatible.
It follows that 
$x \wedge \left( \V_{i \in I} y_{i} \right)$
and
$\V_{i \in I} x \wedge y_{i}$
both actually exist.
For each $j \in I$, we have that
$x \wedge y_{j} \leq x \wedge \left( \V_{i \in I} y_{i} \right)$.
Thus
$\V_{i \in I} x \wedge y_{i} \leq x \wedge \left( \V_{i \in I} y_{i} \right)$.
It remains to prove the reverse inequality.
Put $y = \V_{i \in I} y_{i}$.
Define $g = \V \{g' \leq p(x)p(y) \st g'x = g'y\}$.
Thus $x \wedge y = gx = gy$.
Define $g_{i} = \V \{ g_{i}' \leq p(x)p(y_{i}) \st g_{i}'x = g_{i}'y_{i} \}$.
Then $x \wedge y_{i} = g_{i}x = g_{i}y_{i}$.

It is easy to show that $p(y_{i})g \leq g_{i}$.
To show that $g_{i} \leq p(y_{i})g$ observe that $g_{i} \leq p(y_{i})$.
It follows that $g_{i}x = g_{i}y$.
We have therefore shown that $g_{i} = p(y_{i})g$.

We now calculate:
$$x \wedge y = gy = g\bigl( \V_{i \in I} y_{i} \bigr) = \V_{i \in I} gp(y_{i})y_{i} = \V_{i \in I} g_{i}y_{i} = \V_{i \in I} x \wedge y_{i}.$$ 
\end{proof}

\begin{remark}
Let $S$ be a pseudogroup and
let $X$ be a left $S$-module.
Then the action $S \times X \rightarrow X$ is in fact a {\em monoid} action.
Observe that $1 = \bigvee_{e \in \mathsf{E}(S)} e$.
Then $1x = (\bigvee_{e \in \mathsf{E}(S)}e)x = \bigvee_{e \in \mathsf{E}(S)} ex$.
But $p(x)x = x$ occurs and otherwise $ex \leq x$.
It therefore follows that $1x = x$, as required.
\end{remark}

A {\em homomorphism} of left $S$-modules is simply a homomorphism of the underlying supported left $S$-actions that preserves
all left compatible joins.
If $S$ is a pseudogroup, we denote the category of left $S$-modules and their homomorphisms by $S$-$\Mod$.
We now come to the key definition of this paper.

\begin{definition*}
Let $S$ and $T$ be pseudogroups.
We say that $S$ and $T$ are {\em Morita equivalent} 
if their categories $S$-$\Mod$ and $T$-$\Mod$ are equivalent.
\end{definition*}

\subsection{Pointed actions}

Let $S$ be an inverse semigroup with zero.
We say that a left $S$-action on a set $X$ is {\em pointed} if there
is a minimum element $0 \in X$ such that $s0= 0$ for all $s \in S$, and $0x = 0$ for all $x \in X$.

\begin{lemma}
Let $S$ be a pseudogroup. Every left $S$-module $X$ is a pointed left $S$-action.
\end{lemma}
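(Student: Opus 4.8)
The plan is to recognize both requirements in the definition of a pointed action as instances of the module distributivity axioms applied to empty joins, after correctly identifying the relevant zeros. First I would note that the minimum element $0 \in X$ demanded by the definition is exactly the smallest element supplied by part~(2) of the definition of a left $S$-module, namely the join of the empty compatible subset of $X$; this is the least element for the order on $X$, so it is a genuine minimum. The one point needing care is the overloading of the symbol~$0$: in the condition $0x = 0$ the leftmost $0$ is the zero of $S$ (the join of the empty subset of $S$), while every other $0$ denotes the minimum of $X$.

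Next I would check that $s0 = 0$ for all $s \in S$. Writing the minimum of $X$ as the empty join $\V_{i \in \emptyset} x_{i}$ and applying part~(3) of the module definition (distributivity of the action over left compatible joins) with empty index set gives $s0 = s\bigl(\V_{i \in \emptyset} x_{i}\bigr) = \V_{i \in \emptyset} sx_{i} = 0$, both empty joins being the minimum of $X$.

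Then I would check that $0x = 0$ for all $x \in X$. Since the zero of $S$ is $\V_{i \in \emptyset} s_{i}$, part~(4) of the module definition applies and yields $0x = \bigl(\V_{i \in \emptyset} s_{i}\bigr)x = \V_{i \in \emptyset} s_{i}x = 0$, where the final $0$ is once more the empty join in $X$, that is, its minimum.

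I do not expect any real obstacle: the content is essentially bookkeeping with empty joins, and the only thing to watch is keeping straight which $0$ lives in $S$ and which lives in $X$, so that the empty-join conventions of the two structures are invoked in the right places.
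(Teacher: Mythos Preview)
Your proposal is correct and follows essentially the same approach as the paper's proof: both identify the minimum of $X$ as the empty join from condition~(2), derive $s0=0$ from the distributivity condition~(3) applied to the empty join, and derive $0x=0$ from condition~(4) applied to the empty join in $S$. Your version is slightly more explicit about the empty-join bookkeeping and the overloading of the symbol~$0$, but the argument is the same.
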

\begin{proof} We have already observed in the definition that $X$ has a smallest element denoted by $0$.
By the distributivity condition (3) in the same definition, $s0 = 0$ for all $s \in S$.
And, by condition (4), $0x = 0$ for all $x \in X$.
It follows that left $S$-modules are always pointed.
\end{proof}

Recall, that \cite{Schein} showed that every inverse semigroup could be completed to a pseudogroup
by considering the set of all compatible order-ideals of the inverse semigroup.
The following is the analogue of Schein's result for actions. 

\begin{lemma}\label{lem:scampton} Let $S$ be a pseudogroup and let $S \times X \rightarrow X$
be a pointed supported left action with support  $p \colon X \rightarrow \mathsf{E}(S)$.
Denote by $\mathsf{L}(X)$ the set of all left compatible order-ideals of $X$.
\begin{enumerate}
\item $\mathsf{L}(X)$ can be made into a left $S$-module.
\item There is a homomorphism of $\iota \colon X \rightarrow \mathsf{L}(X)$, given by $\iota (x) = x^{\downarrow}$, of supported left $S$-actions.
\item $\iota \colon X \rightarrow \mathsf{L}(X)$ is universal.
\end{enumerate}
\end{lemma}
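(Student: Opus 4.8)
The plan is to equip $\mathsf{L}(X)$ with the structure inherited from $X$, verify the four module axioms, check the homomorphism property of $\iota$, and finally establish the universal property. The whole argument is the module-theoretic analogue of Schein's completion, but with one genuinely new feature that I flag below. First I would fix the data: for $A \in \mathsf{L}(X)$ set $P(A) = \V_{a \in A} p(a)$, which exists because $\mathsf{E}(S)$ is a frame, and let $s \in S$ act by $s \cdot A = (sA)^{\downarrow}$, where $sA = \{sa \st a \in A\}$. By Lemma~\ref{lem:day}(1) the set $sA$ is left compatible, and a short computation shows that the downward closure of a left compatible set is again left compatible (if $u \leq sa$, $v \leq sb$ and $sa \sim_{l} sb$, multiply $p(sa)(sb) = p(sb)(sa)$ by the commuting idempotent $p(u)p(v) \leq p(sa)p(sb)$ to kill the outer supports), so $s \cdot A \in \mathsf{L}(X)$. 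I would then check associativity of the action using that the action on $X$ is order preserving, that $P(A) \cdot A = A$ (immediately, since $p(a) \leq P(A)$ forces $P(A)a = a$ for every $a \in A$), and that $P(s \cdot A) = sP(A)s^{-1}$ (distributing multiplication in $S$ over the join defining $P$). This makes $(\mathsf{L}(X), P)$ a supported action, with inclusion as its intrinsic order (a routine check).

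The subtle point is axiom (4), and it dictates how joins in $\mathsf{L}(X)$ must be formed. If the join of a left compatible family $\{A_i\}$ were simply $\bigcup_i A_i$, then axiom (4) would fail: taking $S$ to be the symmetric inverse monoid on two points, $X = S$ with the left regular action and support $\mathbf{r}$, and $A = e_S^{\downarrow}$ the ideal of idempotents, one finds with $s_1 = \sigma_1$, $s_2 = \sigma_2$ the two partial transpositions that $(\V_i s_i) \cdot A = (s_1 \vee s_2) \cdot A$ strictly contains $\bigcup_i (s_i \cdot A)$, the missing element being exactly the image $(\V_i s_i)e_S$ of the join formed inside the complete pseudogroup $S$. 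The remedy --- the one new ingredient compared with Schein's setting, where the underlying inverse semigroup carries no joins at all --- is to form the join of $\{A_i\}$ as the smallest member of $\mathsf{L}(X)$ containing $\bigcup_i A_i$ that is closed under those compatible joins which already exist in the poset $X$; equivalently, I would take $\mathsf{L}(X)$ to consist of the left compatible order-ideals closed in this sense (principal ideals $x^{\downarrow}$ are automatically closed, since an upper bound of a subset of $x^{\downarrow}$ may be taken to be $x$). With this reading, axiom (2) holds by construction, axiom (3) reduces to $(\bigcup_i sA_i)^{\downarrow} = \bigcup_i (sA_i)^{\downarrow}$ together with compatibility of the closure with the action, and axiom (4) is precisely the statement that the closure reabsorbs the joins formed in $S$. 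Verifying that the closure of a left compatible set is left compatible, and that it commutes with the action, is the main technical obstacle.

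Parts (2) and (3) are then comparatively formal. For (2), the map $\iota(x) = x^{\downarrow}$ satisfies $P(\iota(x)) = \V_{y \leq x} p(y) = p(x)$ and $\iota(sx) = (sx)^{\downarrow} = (s\,x^{\downarrow})^{\downarrow} = s \cdot \iota(x)$, the middle equality because any $u \leq sx$ can be written $s(s^{-1}u)$ with $s^{-1}u \leq x$; hence $\iota$ is a homomorphism of supported actions. For (3), given a left $S$-module $M$ and a homomorphism of supported actions $\phi \st X \to M$, I would set $\bar\phi(A) = \V_{a \in A} \phi(a)$. This join exists in $M$ because $\phi$ carries left compatible families to left compatible families: applying $\phi$, which is $\mathsf{E}(S)$-equivariant and support preserving, to $p(a)b = p(b)a$ gives $\phi(a) \sim_{l} \phi(b)$ in $M$. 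One then checks that $\bar\phi$ is a module homomorphism --- preservation of the action uses distributivity in $M$, and preservation of joins uses that $\phi$ sends the newly closed-in joins of $X$ to joins already present in $M$ --- and that $\bar\phi \circ \iota = \phi$. Uniqueness follows because $A = \V_{a \in A} \iota(a)$ holds in $\mathsf{L}(X)$, forcing $\psi(A) = \V_{a \in A} \phi(a)$ for any module homomorphism $\psi$ with $\psi \circ \iota = \phi$. In short, the crux of the lemma is the correct definition of joins in $\mathsf{L}(X)$ and the verification of axiom (4); everything else is bookkeeping.
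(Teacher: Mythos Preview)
Your diagnosis of axiom~(4) is correct and in fact exposes a genuine gap in the paper's own argument. The paper defines the action on $\mathsf{L}(X)$ by $s\cdot A = sA$ (proving directly that $sA$ is already a left compatible order-ideal), sets $\widehat p(A)=\V_{a\in A}p(a)$, takes the join of a left compatible family $\{A_i\}$ to be the plain union $\bigcup_i A_i$, and then simply asserts that ``it is now routine to check that $\mathsf{L}(X)$ is a left $S$-module.'' Your counterexample with $X=S=I_{2}$, $A=e_S^{\downarrow}$ and $s_1,s_2$ the two rank-one non-idempotent partial bijections is valid: one has $(s_1\vee s_2)A=\tau^{\downarrow}$ strictly containing $s_1 A\cup s_2 A=\{0,f_{1,2},f_{2,1}\}$, so axiom~(4) fails for the union join exactly as you claim. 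Parts~(2) and~(3) of the paper's proof are argued just as in your sketch.

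Your proposed repair is the right idea but is formulated too narrowly. Closing under ``compatible joins which already exist in the poset $X$'' rescues your particular counterexample only because there $X=S$ is itself a pseudogroup and $(\V_i s_i)a$ genuinely is the poset join of the $s_i a$; for an arbitrary pointed supported action there is no reason the poset $X$ should contain a join of $\{s_i a\}$, nor that any such join should equal $(\V_i s_i)a$. The saturation condition that matches axiom~(4) has to be phrased in terms of the $S$-action itself --- for instance, requiring that whenever $\{s_i\}\subset S$ is compatible and $s_i x\in A$ for all $i$ then $(\V_i s_i)x\in A$ --- and one must then also take the action to be the closure of $sA$ and verify that these choices interact correctly. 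You rightly flag this interaction as ``the main technical obstacle'' without resolving it; the paper does not address it at all.
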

\begin{proof} (1) We first of all define an action $S \times \mathsf{L}(X) \rightarrow \mathsf{L}(X)$.
Let $A \subseteq X$ be a left compatible order-ideal.
We prove that $sA$ is also a left compatible order-ideal.
Let $x \leq sy$ where $y \in A$.
Then $s^{-1}x \leq s^{-1}s y \leq y$.
It follows that $s^{-1}x \in A$.
Thus $ss^{-1}x \in sA$.
But $ss^{-1}x = x$.
It follows that $sA$ is an order-ideal.
It remains to show that $sA$ is a left compatible subset.
But this follows by part (1) of Lemma~\ref{lem:day}.
We therefore have an action $S \times \mathsf{L}(X) \rightarrow \mathsf{L}(X)$.

Define $\widehat{p} \colon \mathsf{L}(X) \rightarrow \mathsf{E}(S)$ by
$\widehat{p}(A) = \V_{x \in A} p(x)$.
It can be easily checked that this really is a support.

Let $A,B \in \mathsf{L}(X)$.
Suppose that $\widehat{p}(A)B = \widehat{p}(B)A$. 
Let $x \in A$ and $y \in B$.
We prove $x$ and $y$ are left compatible.
Put $e = \widehat{p}(A) = \V_{a \in A} p(a)$ and $f = \widehat{p}(B) = \V_{b \in B} p(b)$.
By assumption, $ey = fx_{1}$ and $fx = ey_{1}$, where $x_{1} \in A$ and $y_{1} \in B$.
Observe that since $y \in B$ we have that $p(y) \leq f$.
Thus
$p(y)x = p(y)p(x)x =  p(x)p(y)fx = p(x)p(y)ey_{1}$.
Now $p(y)y_{1} = p(y_{1})y$.
Thus $p(y)x = ep(x) p(y)y_{1} = e p(x) p(y_{1})y = ep(y_{1})p(x)y \leq p(x)y$.
By symmetry, we have that $p(x)y \leq p(y)x$ and so $x \sim _{l} y$, as required.
It follows that if $\mathscr{A} = \{A_{i} \st i \in I \}$ is any left compatible subset of $\mathsf{L}(X)$,
then $\bigcup \mathscr{A} \in \mathsf{L}(X)$.
It is now routine to check that $\mathsf{L}(X)$ is a left $S$-module.

(2) This is a straightforward verification.

(3) Let $(M,q)$ be  a left $S$-module and let $\alpha \colon X \rightarrow M$ be
a homomorphism. Then there is a unique homomorphism $\beta \colon \mathsf{L}(X) \rightarrow M$ of left $S$-modules
such that $\beta \iota = \alpha$.
Define $\beta (A) = \bigvee_{a \in A} \alpha (a)$.
This definition makes sense, since if $a \sim_{l} b$ then $\alpha (a) \sim_{l} \alpha (b)$.
It is routine to check that $\beta$ has all the required properties.
\end{proof}

\begin{example}\label{ex:hornchurch}
Let $S$ be a pseudogroup and $I$ a left ideal.
There is an action $S \times I \rightarrow I$ by left multiplication.
It is also a supported action when we define $p \colon I \rightarrow \mathsf{E}(S)$ by $p(x) = xx^{-1}$. 
Using Lemma~\ref{lem:scampton}, we may construct a left $S$-module $\mathsf{L}(I)$.
In particular, in this way, there is a left $S$-module $\mathsf{L}(S)$ which makes up for the fact that
the left action of $S$ on itself does not form a left $S$-module.
\end{example}

\section{Characterizations of Morita equivalence}

In this section, we connect the definition of Morita equivalence for pseudogroups described in section~\ref{sec:pseudogroupmodules}
with the Morita theory for inverse quantal frames and \'etale groupoids.

\subsection{Background on quantales and modules}\label{sec:qumod}

Let us recall basic facts and definitions. General references on quantales are~\cite{Qbook,Rosenthal}. For monoids and actions in monoidal categories see~\cite[Ch.~VII]{MacLane}.

First, following~\cite[Ch.~I]{JT}, by a \emph{sup-lattice} will be meant a complete lattice, and a \emph{homomorphism} of sup-lattices $f:L\to M$ is a map that preserves arbitrary joins:
\[
f\bigl(\V_i x_i\bigr) = \V_i f(x_i).
\]
The top element of a sup-lattice $L$ is denoted by $1_L$, or simply $1$, and the bottom element by $0_L$, or simply $0$. The resulting category of sup-lattices $\SL$ is monoidal closed with respect to the tensor product of sup-lattices.

The \emph{category of unital involutive quantales} is the category of involutive monoids in $\SL$. In other words, a unital involutive quantale $Q$ is a sup-lattice equipped with an involutive monoid structure whose multiplication and involution distribute over arbitrary joins,
\[
a \bigl( \bigvee_{i} b_i  \bigr) = \bigvee_i ab_i
\quad\text{ and }\quad
\bigl(\V_i a_i\bigr)^*=\V_i a_i^*,
\]
(multiplication distributes on both sides due to the involution) and a homomorphism of unital quantales is a sup-lattice homomorphism which is also a homomorphism of involutive monoids.
The multiplicative unit of a unital quantale $Q$ is denoted by $e_Q$, or simply $e$. The frames are exactly the unital involutive quantales with idempotent multiplication such that $e=1$ (then, necessarily, $ab=a\wedge b$). Similarly to any commutative quantale, the involution on a frame can be taken to be trivial: $a^*=a$. If a frame carries an independent structure of unital involutive quantale it is referred to as a \emph{quantal frame}.

The \emph{category of left modules} of a unital involutive quantale $Q$ is the category of left $Q$-actions in $\SL$ (the involution plays no role in the definition). The action of an element $a\in Q$ on an element $x$ of a left $Q$-module $X$ is denoted by $ax$.

All these structures can be presented by generators and relations. An exposition of this for sup-lattices and quantales in a form which is convenient for the purposes of this paper can be found in~\cite[\S II.1--2]{gamap}. Let us recall this. The free sup-lattice generated by a set $X$ is the powerset $\mathcal P(X)$, and a presentation of a sup-lattice is a pair $(X,R)$ where $X$ is the set of generators and $R$ is a subset of $\mathcal P(X)\times\mathcal P(X)$; each pair $(S,T)\in R$ is to be thought of as an equation $\V S=\V T$. Concretely, a construction of the sup-lattice presented by $(X, R)$ is given by the following subset of $\mathcal P(X)$:
\[
\langle X\vert R\rangle=\bigl\{Y\subset X : (S\subset Y\iff T\subset Y)\textrm{ for all }(S,T)\in R\bigr\}.
\]
This is closed under arbitrary intersections in $\mathcal P(X)$, so it defines a closure operator, and the injection of generators $\eta:X\to\mathcal \langle X\vert R\rangle$ assigns to each $x\in X$ the closure of $\{x\}$. Then $\eta_X$ is universal among the maps $f:X\to L$ to sup-lattices $L$ that \emph{respect the relations} in $R$, i.e., such that $\V f(S)=\V f(T)$ for all $(S,T)\in R$.

Unital involutive quantales can be presented in a similar way. Letting $M$ be an involutive monoid, the free sup-lattice $\mathcal P(M)$ is a unital involutive quantale under pointwise multiplication and involution, and the singleton assignment $M\to\mathcal P(M)$ is universal among the homomorphisms of involutive monoids $f:M\to Q$ to unital involutive quantales $Q$. Then, if $(M,R)$ is a sup-lattice presentation and $R$ is \emph{stable} --- i.e.,
$(mS,mT)\in R$ and $(S^*,T^*)\in R$ for all $(S,T)\in R$ and $m\in M$ ---,
the presented sup-lattice $\langle M\vert R\rangle$ is a unital involutive quantale. The multiplication of two sets $U,V\in \langle M\vert R\rangle$ is the closure of the pointwise product of $U$ and $V$, and the involution is pointwise. This quantale has the expected universal property, namely every homomorphism of involutive monoids $f:M\to Q$ to a unital involutive quantale $Q$ such that $\V f(S)=\V f(T)$ for all $(S,T)\in R$ extends uniquely to a homomorphism of unital involutive quantales $f^\sharp:\langle M\vert R\rangle\to Q$.

Related to this, let $A$ be a meet-semilattice and consider the set of relations
\[
R=\bigl\{(\{a,b\},\{b\})\in\mathcal P(A)\times\mathcal P(A) : a\le b\bigr\}.
\]
Then $\langle A\vert R\rangle$ is the frame $\lc(A)$ whose elements are the downwards closed sets of $A$, and it is universal among homomorphisms of meet semi-lattices $f:A\to F$ to frames $F$. The injection of generators is the principal ideal mapping $a\mapsto a^\downarrow$. If we add further relations that are stable for $\wedge$ we obtain again a frame, and a corresponding universal property. See~\cite[\S II.2.11]{J}.

The above facts are easily adapted to presentations of quantale modules. Let us describe this.

\begin{definition*}
Let $M$ be an involutive monoid acting on the left on a set $X$, and let
\[
R_M\subset\mathcal P(M)\times\mathcal P(M)\quad\textrm{ and }\quad R_X\subset\mathcal P(X)\times\mathcal P(X).
\]
We say that the pair $(R_M,R_X)$ is \emph{jointly stable} if the following conditions hold:
\begin{enumerate}
\item $m R_M\subset R_M$ and $R_M^*= R_M$ for all $m\in M$\\
(i.e., $R_M$ is stable as described above);
\item $m R_X\subset R_X$ for all $m\in M$;
\item $R_M x\subset R_X$ for all $x\in X$.
\end{enumerate}
\end{definition*}

\begin{proposition}\label{prop:presentations}
Let $M$ be an involutive monoid acting on the left on a set $X$, and let $(R_M,R_X)$ be a jointly stable pair as in the above definition.
\begin{enumerate}
\item The sup-lattice $\langle X\vert R_X\rangle$ is a left $\langle M\vert R_M\rangle$-module whose action uniquely extends the action of $M$ on $X$.
\item Given any left $\langle M\vert R_M\rangle$-module $\Xi$ and any map $f:X\to \Xi$ that respects the relations in $R_X$ and is $M$-equivariant (i.e., $f(mx) = \eta_M(m) f(x)$ for all $m\in M$ and $x\in X$), there is a unique homomorphism of left $\langle M\vert R_M\rangle$-modules $f^\sharp:\langle X\vert R_X\rangle\to\Xi$ such that the following diagram commutes:
\[
\xymatrix{
X\ar[drr]_f\ar[rr]^{\eta_X}&&\lcc(X)\ar[d]^{f^\sharp}\\
&&\Xi.
}
\]
Explicitly, $f^\sharp$ is defined for all $J\in\langle X\vert R_X\rangle$ by
\[
f^\sharp(J) = \V_{x\in J} f(x).
\]
\end{enumerate}
\end{proposition}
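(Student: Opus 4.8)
The plan is to mirror the quantale presentation argument recalled just above, working throughout with the closure operators $c_M\colon\mathcal P(M)\to\langle M\vert R_M\rangle$ and $c_X\colon\mathcal P(X)\to\langle X\vert R_X\rangle$ attached to the two presentations; both are surjective, join-preserving, and satisfy the standard identity $c\bigl(\bigcup_i A_i\bigr)=c\bigl(\bigcup_i c(A_i)\bigr)$. The starting point is that $\mathcal P(X)$ is a genuine left $\mathcal P(M)$-module under the pointwise action $U\cdot V=\{mx\st m\in U,\ x\in V\}$, the module axioms being inherited directly from the monoid action $M\times X\to X$ (associativity and unitality from $e_Mx=x$, join-preservation because the pointwise product distributes over unions in each variable). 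Note that condition~(1) of joint stability is exactly what makes $\langle M\vert R_M\rangle$ a unital involutive quantale, so that the phrase ``left $\langle M\vert R_M\rangle$-module'' is meaningful.

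For Part~(1) I would define the candidate action by $a\cdot J=c_X(aJ)$, where $aJ$ is the pointwise product in $\mathcal P(X)$, and then show that it descends from the pointwise action. The technical heart --- and the step I expect to be the main obstacle --- is the compatibility lemma
\[
c_X\bigl(c_M(U)\,c_X(V)\bigr)=c_X(UV)\qquad(U\in\mathcal P(M),\ V\in\mathcal P(X)),
\]
which says that $a\cdot J$ depends only on the closure classes of its inputs. I would prove it in two halves. To get $c_X\bigl(U\,c_X(V)\bigr)=c_X(UV)$ it suffices, for fixed $m\in M$, to establish the nontrivial inclusion $m\,c_X(V)\subseteq c_X(mV)$; this follows by checking that $Z=\{x\in X\st mx\in c_X(mV)\}$ is closed and contains $V$, the key point being that condition~(2) ($mR_X\subseteq R_X$) sends each $(S,T)\in R_X$ to $(mS,mT)\in R_X$, so closedness of $c_X(mV)$ transports to $Z$. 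Taking unions over $m\in U$ and applying the closure identity gives the first half. Symmetrically, $c_X\bigl(c_M(U)V\bigr)=c_X(UV)$ reduces, for fixed $x\in X$, to the inclusion $c_M(U)x\subseteq c_X(UV)$, i.e.\ to showing that $W=\{m\in M\st mx\in c_X(UV)\}$ is closed in the $\langle M\vert R_M\rangle$-sense; here it is condition~(3) ($R_Mx\subseteq R_X$) that converts $(S,T)\in R_M$ into $(Sx,Tx)\in R_X$ and lets closedness of $c_X(UV)$ pass to $W$. Combining the two halves yields the lemma.

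With the lemma in hand the rest is formal transport along the surjections $c_M$ and $c_X$: the action $a\cdot J=c_X(aJ)$ is well defined on closure classes and equal to $c_X$ of the pointwise action; it preserves joins in each variable (because $c_X$, $c_M$, and the pointwise product do); and it is associative and unital, these reducing via the lemma to the corresponding axioms for the pointwise action on $\mathcal P(X)$ (for instance $e\cdot J=c_X(J)=J$, using $e_Mx=x$ and that $J$ is closed). On generators one computes $\eta_M(m)\cdot\eta_X(x)=c_X(\{mx\})=\eta_X(mx)$, so the action extends that of $M$ on $X$. Uniqueness is immediate, since any join-preserving action extending the $M$-action is forced on the join-generators $\eta_X(x)$ and $\eta_M(m)$, hence everywhere.

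For Part~(2), since $f\colon X\to\Xi$ respects the relations in $R_X$, the sup-lattice universal property of $\langle X\vert R_X\rangle$ already supplies a unique sup-lattice homomorphism $f^\sharp$ with $f^\sharp\eta_X=f$, necessarily given by $f^\sharp(J)=\V_{x\in J}f(x)$; this also secures uniqueness, as any module homomorphism is in particular such a sup-lattice map. It then remains only to verify $f^\sharp(a\cdot J)=a\,f^\sharp(J)$. Because both sides preserve joins in $a$ and in $J$, and the generators $\eta_M(m)$, $\eta_X(x)$ join-generate $\langle M\vert R_M\rangle$ and $\langle X\vert R_X\rangle$, it suffices to check the identity on generators, where $f^\sharp\bigl(\eta_M(m)\cdot\eta_X(x)\bigr)=f^\sharp\bigl(\eta_X(mx)\bigr)=f(mx)=\eta_M(m)f(x)=\eta_M(m)\,f^\sharp\bigl(\eta_X(x)\bigr)$, using the extension formula from Part~(1) and the $M$-equivariance of $f$. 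This completes the plan.
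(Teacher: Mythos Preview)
Your proposal is correct and follows essentially the same approach as the paper's proof in the appendix. Both arguments start from the pointwise $\mathcal P(M)$-action on $\mathcal P(X)$, establish that the closure operators $c_M$ and $c_X$ (the paper's $j$ and $k$) are compatible with the action in the sense that $c_M(U)\,c_X(V)\subseteq c_X(UV)$, and then push the module structure down along the surjections; the universal property in Part~(2) is handled identically in both. The only stylistic difference is that the paper phrases the key closedness verifications via the residuations $A\backslash Z$ and $Z/Y$, whereas you define the preimage sets $Z=\{x:mx\in c_X(mV)\}$ and $W=\{m:mx\in c_X(UV)\}$ directly --- but these are literally the same sets, and the use of joint stability conditions~(2) and~(3) is identical.
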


A proof of this proposition is presented in the appendix.

\subsection{Background on inverse quantal frames}

Recall that an inverse quantal frame \cite{Resende2007} is a unital involutive quantale that is isomorphic to the quantale $\opens(G)$ of a localic \'etale groupoid $G$. There is a concrete construction of an \'etale groupoid $\mathsf{G}(Q)$ from an inverse quantal frame $Q$, and there are isomorphisms $G\cong\mathsf{G}(\opens(G))$ and $Q\cong\opens(\mathsf{G}(G))$.

Using the simpler characterization of \cite{Resende2018}, an inverse quantal frame is the same as a unital involutive quantal frame $Q$ that satisfies the following two conditions:
\begin{enumerate}
\item $aa^*a\le a\Longrightarrow a=aa^*a$ for all $a\in Q$ (i.e., $Q$ is \emph{stably Gelfand});
\item $1=\V Q_\ipi$, where $Q_\ipi=\{s\in Q\st ss^*\le e,\ s^*s\le e\}$
is the subset of \emph{partial units}, which is necessarily a pseudogroup.
\end{enumerate}
We borrow notation and terminology from \cite{Resende2012} and denote the \emph{base locale} of an arbitrary inverse quantal frame $Q$ by $Q_0 = \{a\in Q\st a\le e\}$. 
This coincides with the locale of idempotents $\mathsf{E}(Q_\ipi)$.

If $S$ is a pseudogroup, then $\lcc(S)$ is its associated inverse quantal frame,  which consists of all order-ideals of $S$ that are closed under joins of compatible sets. 
We have $\lcc(S)_\ipi\cong S$ and $\lcc(Q_\ipi)\cong Q$ for all pseudogroups $S$ and inverse quantal frames $Q$. 
Both $(-)_\ipi$ and $\lcc$ extend to functors and show that the categories of inverse quantal frames and of pseudogroups are equivalent.

If $S$ is a pseudogroup, we shall write $\mathsf{G}(S)$ for the localic \'etale groupoid $\mathsf{G}(\lcc(S))$, and call it the \emph{groupoid of $S$}.
If $S$ is spatial then $\mathsf{G}(S)$ can be regarded as a sober topological \'etale groupoid. Even if $S$ is not spatial, we can always obtain a sober topological \'etale groupoid from $S$ by taking the locale spectrum of $\mathsf{G}(S)$. This construction yields the groupoid of completely prime filters of $S$ \cite{MR,LL}.

\begin{remark}\label{rem:lccS}
The inverse quantal frame $\lcc(S)$ of a pseudogroup $S$ coincides with the sup-lattice presented as
$\langle S\vert R_S\rangle$, where $R_S$ contains the relations $(C,\{\V C\})$ for all compatible sets $C\subset S$. The injection of generators $\eta_S:S\to\lcc(S)$ is the principal ideal mapping $s\mapsto s^{\downarrow}$. The distributivity properties of $S$ imply that $R_S$ is stable both with respect to the involutive monoid structure and the meets of $S$, and thus $\lcc(S)$ has several universal properties, as explained in section~\ref{sec:qumod}: as a sup-lattice; as a unital involutive quantale; and as a frame. This kind of reasoning will be useful below when dealing with pseudogroup modules.
\end{remark}

\subsection{Sheaves and Morita equivalence for inverse quantal frames}\label{sec:shmoritaiqf}

An \emph{equivariant sheaf} on a localic \'etale groupoid $G$, or a \emph{$G$-sheaf}, is a local homeomorphism $p \colon X \to G_0$, where $G_0$ is the locale of objects of the groupoid, 
together with a left action of $G$ on $X$ satisfying properties such as associativity, the details of which need not concern us here. 
The category of such sheaves and equivariant maps between them is a Grothendieck topos known as the \emph{classifying topos of $G$}, usually written $\mathsf{B}G$.
Regarding this as the ``category of left modules of $G$,'' we formulate one of several equivalent definitions of Morita equivalence for \'etale groupoids:

\begin{definition*}
Two \'etale groupoids $G$ and $H$ are \emph{Morita equivalent} if $\mathsf{B}G$ and $\mathsf{B}H$ are equivalent categories.
\end{definition*}

\begin{remark}\label{rem:HSmaps}
For non-\'etale groupoids in general, Morita equivalence cannot be defined via classifying toposes.
For instance, Morita equivalence of Lie groupoids is the isomorphism relation in the category whose morphisms are the isomorphism classes of principal bi-bundles~\cite{MoerMrcun}, also known as Hilsum--Skandalis maps, and the classifying topos is only a Morita invariant. But the analogous definition for localic groupoids coincides with equivalence of classifying toposes~\cite{Bunge} when restricted to the class of \'etale-complete groupoids, which contains the \'etale groupoids.
\end{remark}

In this paper it will suffice to work with the quantale-theoretic characterizations of $\mathsf{B}G$ given in \cite{Resende2012}. There it is shown that the sheaves on an \'etale groupoid $G$ can be precisely described in terms of modules on the inverse quantal frame $\opens(G)$, and that $\mathsf{B}G$  can be represented as a category of $\opens(G)$-modules. 
There are in fact two equivalent main definitions of such modules, the first of which we shall recall below and then use in section~\ref{sec:equivcat}. The second definition, in terms of Hilbert modules, will only be recalled and used in section~\ref{sec:morebackground}.

The first definition is scattered across several places in \cite{Resende2012}, and here we present it in a way which is convenient for the purposes of this paper:

\begin{definition*}\cite{Resende2012}
Let $Q$ be an inverse quantal frame. By a \emph{$Q$-sheaf} is meant a left $Q$-module $X$ which is also a locale, equipped with a (necessarily unique) monotone map $\spp:X\to Q_0$, called the \emph{support}, such that
\begin{enumerate}
\item $\spp(bx)=b\spp(x)$ for all $b\in Q_0$ and $x\in X$ ($Q_0$-equivariance),
\item $\spp(x)x=x$ for all $x\in X$ (support condition),
\item $\V\sections_X=1_X$ (cover condition),
\end{enumerate}
where $\sections_X$ is the set of \emph{local sections}:
\[
\sections_X=\bigl\{\gamma\in X\st \spp(x\wedge \gamma)\gamma\le x \text{ for all }x\in X\bigr\}.
\]
Given $Q$-sheaves $X$ and $Y$, a \emph{sheaf homomorphism} $\phi:X\to Y$ is a homomorphism of left $Q$-modules that commutes with the supports and sends local sections to local sections:
\[
\spp_Y\circ\phi=\spp_X\quad\textrm{and}\quad
\phi(\sections_X)\subset\sections_Y.
\]
The $Q$-sheaves together with the sheaf homomorphisms form the \emph{category of $Q$-sheaves}, which we denote by $Q$-$\Sh$.
\end{definition*}

From the definition of local section it immediately follows that the set $\sections_X$ is downwards closed in $X$, and that for all $\gamma,\gamma'\in\sections_X$ we have (\cf\ \cite[Lemma~2.35]{Resende2012})
\begin{equation}\label{localsectionprop}
\gamma\le\gamma'\iff\gamma=\spp(\gamma)\gamma'.
\end{equation}
We also remark that the support of a $Q$-sheaf is unique, and that it preserves arbitrary joins, because the conditions that it satisfies make it left adjoint to the mapping $(-)1_X:Q_0\to X$. Hence, being a $Q$-sheaf is a property of a left $Q$-module, rather than extra structure. In addition, a property that will be useful is the following~\cite[Th. 3.25]{Resende2012}: for all $s\in Q_\ipi$ and $x\in X$ we have
\begin{equation}\label{conjugation}
\spp(sx)=s\spp(x)s^{-1},
\end{equation}
and thus also
\begin{equation}\label{conjugation2}
s^{-1}\spp(sx)=\spp(x)s^{-1}.
\end{equation}

\begin{theorem}[{\cite[Th.~3.33]{Resende2012}}]\label{th:QShvsBG}
Let $G$ be an \'etale groupoid. The category of $\opens(G)$-sheaves $\opens(G)$-$Sh$ is isomorphic to the classifying topos $\mathsf{B}G$.
\end{theorem}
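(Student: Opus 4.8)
The plan is to produce an isomorphism of categories by constructing explicit functors in both directions,
\[
\Phi\colon \mathsf{B}G\longrightarrow \opens(G)\text{-}\Sh
\qquad\text{and}\qquad
\Psi\colon \opens(G)\text{-}\Sh\longrightarrow \mathsf{B}G,
\]
and then checking that they are mutually inverse. Throughout write $Q=\opens(G)$. The whole argument leans on the hypothesis that $G$ is \'etale: then the sheaf projections $p\colon E\to G_0$ and the action maps are local homeomorphisms, so in localic terms each such $p$ has a left adjoint $p_!$ to $p^*$ taking genuinely open values (the direct-image, or ``support'', functor), and it is $p_!$ that will realize the support $\spp$.

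First I would define $\Phi$ on objects. To a $G$-sheaf with total space $E$ and projection $p\colon E\to G_0$ I assign the frame $X=\opens(E)$, viewed as a left $Q$-module via the groupoid action: since $G$ is \'etale the action map is open, and transporting opens of $G$ along it gives an operation $a\cdot U$ distributing over joins. The support is $\spp=p_!$. I would then verify the three axioms directly from the local-homeomorphism calculus: $Q_0$-equivariance of $\spp$ is Frobenius reciprocity $p_!(p^*(b)\wedge x)=b\wedge p_!(x)$ for $b\in Q_0$; the support condition $\spp(x)x=x$ is the unit $x\le p^*p_!(x)$ of the adjunction; and the cover condition $\V\sections_X=1_X$ holds because the opens over which $p$ is a homeomorphism onto an open of $G_0$ cover $E$, these opens being exactly the local sections in the sense of the definition, as one confirms using \eqref{localsectionprop}. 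On arrows, $\Phi$ sends an equivariant map to its action on opens, which preserves supports and local sections automatically.

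In the reverse direction I would define $\Psi$ by reconstructing the \'etale space from the local sections. Given a $Q$-sheaf $X$, the support $\spp$ preserves joins (it is a left adjoint, as already noted in the text), so it presents a localic map $p\colon E\to G_0$ with $\opens(E)=X$ and $p_!=\spp$; the cover condition together with \eqref{localsectionprop} forces $p$ to be a local homeomorphism, the elements of $\sections_X$ being its local sections. The left $Q$-action restricted to the partial units $Q_\ipi$ supplies the translations that assemble into a genuine $G$-action on $E$, and here the conjugation identities \eqref{conjugation} and \eqref{conjugation2} are precisely the equivariance conditions demanded of a $G$-sheaf. Morphisms transport in the evident way.

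Finally I would show $\Phi$ and $\Psi$ are mutually inverse on the nose: $\Phi\Psi(X)=\opens(E)=X$ by construction of $E$, while $\Psi\Phi(E,p)$ recovers $(E,p)$ because an \'etale map over $G_0$ is determined by its poset of sections. The step I expect to be the main obstacle is the reconstruction inside $\Psi$, namely proving that every \emph{abstract} $Q$-sheaf --- a $Q$-module that merely happens to be a locale and to carry a support obeying the three axioms --- actually arises from a geometric equivariant sheaf, and is not a member of some strictly larger class. This is exactly where the inverse quantal frame hypothesis $1=\V Q_\ipi$ is indispensable: it guarantees that there are enough partial units for the local sections to cover $E$ and for the reconstructed translations to satisfy the groupoid associativity and unit laws. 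Once the object-level bijection is in place, functoriality and the fact that the correspondence is an isomorphism rather than a mere equivalence follow from the uniqueness of the support and the canonical, choice-free nature of every construction involved.
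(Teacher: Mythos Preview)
The paper does not prove this theorem; it is quoted verbatim from \cite[Th.~3.33]{Resende2012} and used as a black box, so there is no in-paper argument to compare your sketch against.

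That said, your outline is broadly in the spirit of the cited reference: one direction takes a $G$-sheaf $(E,p)$ to the frame $\opens(E)$ with the $Q$-action induced by the open action map and with support $\spp=p_!$; the other reconstructs the local homeomorphism from the support and the local sections. You have correctly identified the reconstruction step $\Psi$ as the substantive one. A few cautions, though. First, your claim that $\Phi\Psi$ and $\Psi\Phi$ are identities ``on the nose'' is optimistic: in the localic setting one typically obtains a canonical isomorphism, and the statement as an \emph{isomorphism} of categories (rather than an equivalence) requires bookkeeping that your sketch elides. Second, passing from the $Q_\ipi$-action on local sections to a genuine localic $G$-action on the reconstructed $E$, and verifying the groupoid associativity and unit laws, is considerably more delicate than ``the conjugation identities \eqref{conjugation} and \eqref{conjugation2} are precisely the equivariance conditions''; in \cite{Resende2012} this occupies a chain of lemmas. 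Third, your verification of the support condition $\spp(x)x=x$ from the adjunction unit $x\le p^*p_!(x)$ gives only an inequality in one direction; the equality needs the $Q_0$-module structure, not just the adjunction. None of these are fatal to the strategy, but they are real gaps relative to a complete proof.
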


\begin{corollary}\label{cor:morita}
Let $Q$ and $R$ be inverse quantal frames. The following conditions are equivalent:
\begin{enumerate}
\item The categories $Q$-$\Sh$ and $R$-$\Sh$ are equivalent;
\item The \'etale groupoids $\mathsf{G}(Q)$ and $\mathsf{G}(R)$ are Morita equivalent.
\end{enumerate}
\end{corollary}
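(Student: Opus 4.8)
The plan is to reduce both conditions to statements about classifying toposes by means of Theorem~\ref{th:QShvsBG}. The starting observation is that for any inverse quantal frame $Q$ the associated \'etale groupoid $\mathsf{G}(Q)$ recovers $Q$, in the sense that there is an isomorphism $Q\cong\opens(\mathsf{G}(Q))$ of inverse quantal frames, as recalled at the start of this subsection.

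First I would check that an isomorphism $Q\cong R$ of inverse quantal frames induces an isomorphism $Q$-$\Sh\cong R$-$\Sh$ of the associated categories of sheaves. This is a routine transport of structure: every defining ingredient of a $Q$-sheaf---the underlying left $Q$-module, the support $\spp\colon X\to Q_0$, the set of local sections $\sections_X$, and the cover condition $\V\sections_X=1_X$---is phrased purely in terms of the quantale operations, which are preserved by the isomorphism, and likewise for the sheaf homomorphisms. Applying this to the isomorphism $Q\cong\opens(\mathsf{G}(Q))$ yields $Q$-$\Sh\cong\opens(\mathsf{G}(Q))$-$\Sh$, and Theorem~\ref{th:QShvsBG}, applied with $G=\mathsf{G}(Q)$ (which is genuinely an \'etale groupoid, so that the theorem applies), identifies the right-hand side with the classifying topos $\mathsf{B}\mathsf{G}(Q)$. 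Hence $Q$-$\Sh\cong\mathsf{B}\mathsf{G}(Q)$, and symmetrically $R$-$\Sh\cong\mathsf{B}\mathsf{G}(R)$.

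With these identifications in hand the equivalence is immediate. Since equivalence of categories is preserved and reflected by composition with isomorphisms of categories, $Q$-$\Sh$ and $R$-$\Sh$ are equivalent if and only if $\mathsf{B}\mathsf{G}(Q)$ and $\mathsf{B}\mathsf{G}(R)$ are equivalent, and by the definition of Morita equivalence for \'etale groupoids recalled above this is exactly the assertion that $\mathsf{G}(Q)$ and $\mathsf{G}(R)$ are Morita equivalent. I expect no serious obstacle here: essentially all the content is carried by Theorem~\ref{th:QShvsBG}, and the only points requiring care are the bookkeeping of the transport-of-structure isomorphism and the verification that $\mathsf{G}(Q)$ lies in the class of \'etale groupoids to which Theorem~\ref{th:QShvsBG} applies.
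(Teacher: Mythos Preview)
Your proposal is correct and follows exactly the intended route: the paper gives no explicit proof of this corollary, treating it as immediate from Theorem~\ref{th:QShvsBG}, the isomorphism $Q\cong\opens(\mathsf{G}(Q))$, and the definition of Morita equivalence for \'etale groupoids. Your write-up simply spells out the transport-of-structure step that the paper leaves implicit.
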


Accordingly, we adopt the following definition (\cf\ \cite[\S 5.6]{QR}):

\begin{definition*}
Two inverse quantal frames are \emph{Morita equivalent} if their categories of sheaves are equivalent.
\end{definition*}

\subsection{Morita equivalence for pseudogroups}\label{sec:equivcat}

The purpose of this section is to prove that the category of modules of an arbitrary pseudogroup $S$ is equivalent to the category of sheaves of its inverse quantal frame $\lcc(S)$. As a consequence, two pseudogroups are Morita equivalent if and only if their inverse quantal frames are, and this proves Theorem~\ref{theoremA} of the Introduction.

The first step will be to provide a construction of pseudogroup modules from sheaves on inverse quantal frames:

\begin{lemma}\label{lem:Theta}
Let $Q$ be an inverse quantal frame, and $X$ a $Q$-sheaf. Then $\sections_X$ is a $Q_\ipi$-module whose action is the restriction of the action of $Q$ to partial units and local sections, and whose support is the restriction of the support of $X$ to local sections. Moreover, any homomorphism of $Q$-sheaves $\phi:X\to Y$ restricts to a homomorphism of $Q_\ipi$-modules $\phi\vert_{\sections_X}:\sections_X\to\sections_Y$, thus yielding a functor
\[
\Theta:Q\textrm{-}\Sh\to Q_\ipi\textrm{-}\Mod.
\]
\end{lemma}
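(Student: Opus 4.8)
The plan is to verify directly that $\sections_X$, equipped with the restricted action of $Q_\ipi$ and the restricted support $p=\spp|_{\sections_X}$ (valued in $Q_0=\mathsf{E}(Q_\ipi)$), satisfies the four defining conditions of a left $Q_\ipi$-module, and then that $X\mapsto\sections_X$, $\phi\mapsto\phi|_{\sections_X}$ is functorial. The argument rests on two closure properties of the local sections: stability under the action of partial units, and stability under joins of left compatible families computed in the ambient locale $X$. Granting these, every remaining axiom is obtained by transporting the sheaf conditions and the conjugation formula~\eqref{conjugation} into the language of supported actions.

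First I would establish the key stability property: for $s\in Q_\ipi$ and $\gamma\in\sections_X$, the element $s\gamma$ is again a local section. Fix $x\in X$; the goal is $\spp(x\wedge s\gamma)(s\gamma)\le x$. Acting by $s$ on the local-section inequality for $\gamma$ at $s^{-1}x$, namely $\spp(s^{-1}x\wedge\gamma)\gamma\le s^{-1}x$, gives $s\,\spp(s^{-1}x\wedge\gamma)\gamma\le ss^{-1}x\le x$, where the final step uses $ss^{-1}\le e$ and unitality of the action. On the other hand, monotonicity of the action yields $s^{-1}(x\wedge s\gamma)\le s^{-1}x\wedge\gamma$, and since $\spp(x\wedge s\gamma)\le\spp(s\gamma)=s\spp(\gamma)s^{-1}\le ss^{-1}$ the element $ss^{-1}$ fixes $x\wedge s\gamma$; hence $x\wedge s\gamma=s\bigl(s^{-1}(x\wedge s\gamma)\bigr)\le s(s^{-1}x\wedge\gamma)$, and applying the monotone map $\spp$ together with~\eqref{conjugation} gives $\spp(x\wedge s\gamma)\le s\,\spp(s^{-1}x\wedge\gamma)s^{-1}$. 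Combining these, and using $s^{-1}s\gamma\le\gamma$, I obtain $\spp(x\wedge s\gamma)(s\gamma)\le s\,\spp(s^{-1}x\wedge\gamma)s^{-1}s\gamma\le s\,\spp(s^{-1}x\wedge\gamma)\gamma\le x$, as required. This is the step I expect to be the main obstacle, since it is the only place where the partial-unit structure and~\eqref{conjugation} must be orchestrated with care.

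With stability under partial units in hand, the supported-action axioms are immediate: $p(\gamma)\gamma=\spp(\gamma)\gamma=\gamma$ is the support condition, while $p(s\gamma)=s\,p(\gamma)\,s^{-1}$ is precisely~\eqref{conjugation}. For the order and join conditions I would show that $\sections_X$ is closed under left compatible joins formed in $X$. Given a left compatible family $\{\gamma_i\}\subseteq\sections_X$, i.e.\ $\spp(\gamma_i)\gamma_j=\spp(\gamma_j)\gamma_i$, set $\gamma=\V_i\gamma_i$ and $b_i=\spp(x\wedge\gamma_i)$. Frame distributivity in the locale $X$, join-preservation of $\spp$, and distributivity of the action over joins give $\spp(x\wedge\gamma)\gamma=\V_{i,j}b_i\gamma_j$; the cross terms are tamed by compatibility, since $b_i\le\spp(\gamma_i)$ yields $b_i\gamma_j=b_i\spp(\gamma_i)\gamma_j=b_i\spp(\gamma_j)\gamma_i\le b_i\gamma_i\le x$, so $\gamma\in\sections_X$. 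By~\eqref{localsectionprop} the module order on $\sections_X$ agrees with the order inherited from $X$, so this $X$-join is the required module join and $0_X$ is its least element. The two distributivity conditions of the module definition then follow from join-preservation of the $Q$-action, invoking Lemma~\ref{lem:day} to guarantee that the families $\{s\gamma_i\}$ and $\{s_i\gamma\}$ remain left compatible, together with the fact that compatible joins in $Q_\ipi$ are inherited from $Q$.

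Finally, functoriality is routine. A sheaf homomorphism $\phi\colon X\to Y$ satisfies $\phi(\sections_X)\subseteq\sections_Y$ by definition, so $\phi|_{\sections_X}$ is well defined; it is $Q_\ipi$-equivariant because $\phi$ is a $Q$-module homomorphism, it commutes with the supports because $\spp_Y\circ\phi=\spp_X$, and it preserves left compatible joins because $\phi$ preserves all joins and those joins are computed in $X$ and $Y$. Thus $\phi|_{\sections_X}$ is a homomorphism of $Q_\ipi$-modules, and since restriction preserves identities and composition, $\Theta$ is a functor.
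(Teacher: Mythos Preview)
Your proof is correct and follows the same overall architecture as the paper's: establish that $\sections_X$ is stable under the action of partial units, check the supported-action axioms via the support condition and \eqref{conjugation}, prove closure under left compatible joins, and then observe that functoriality is immediate from the definition of sheaf homomorphism.

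The two arguments diverge only in the execution of the two closure steps, and in an interesting way. For stability under partial units, the paper's route is shorter: from $s^{-1}(x\wedge s\gamma)\le\gamma$ one invokes \eqref{localsectionprop} directly to get $\spp\bigl(s^{-1}(x\wedge s\gamma)\bigr)\gamma=s^{-1}(x\wedge s\gamma)$, then applies \eqref{conjugation2} to conclude $\spp(x\wedge s\gamma)s\gamma=ss^{-1}(x\wedge s\gamma)\le x$ in one line, whereas you bound $\spp(x\wedge s\gamma)$ by $s\,\spp(s^{-1}x\wedge\gamma)s^{-1}$ via monotonicity and conjugation before combining. Conversely, for closure under left compatible joins your computation is the more direct one: from $b_i=\spp(x\wedge\gamma_i)\le\spp(\gamma_i)$ you obtain $b_i\gamma_j=b_i\spp(\gamma_i)\gamma_j=b_i\spp(\gamma_j)\gamma_i\le b_i\gamma_i\le x$ immediately, whereas the paper first proves the auxiliary identity $b(x\wedge y)=x\wedge by$ for $b\in Q_0$ and then rewrites $\spp(x\wedge y)z$ as $\spp(x\wedge w)w$ with $w=\spp(z)y=\spp(y)z$. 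Both trade-offs are minor; either proof could adopt the other's shortcut.
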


\begin{proof}
The action of $Q_\ipi$ on $\sections_X$ is well defined due to \cite[Lemma 4.52]{Resende2012}. In order to make the paper more self-contained let us provide an explicit (and slightly shorter) proof of this fact here. Let $s\in Q_\ipi$ and $\gamma\in\sections_X$. We need to show that $s\gamma$ is a local section, i.e., that the inequality
\[
\spp(x\wedge s\gamma)s\gamma\le x
\]
is satisfied for all $x\in X$. Indeed, multiplying by $s^{-1}$ on the left in the inequality $x\wedge s\gamma\le s\gamma$ we obtain
\[
s^{-1}(x\wedge s\gamma)\le\gamma,
\]
so due to \eqref{localsectionprop} we have
\[
\spp\bigl(s^{-1}(x\wedge s\gamma)\bigr)\gamma=s^{-1}(x\wedge s\gamma),
\]
and, using \eqref{conjugation2},
\[
\spp(x\wedge s\gamma)s\gamma = s\spp\bigl(s^{-1}(x\wedge s\gamma)\bigr)\gamma
=ss^{-1}(x\wedge s\gamma)\le x.
\]
So there is a well defined left $Q_\ipi$-action on $\sections_X$, which moreover is supported, with support $p:\sections_X\to E(Q_\ipi)$ equal to $\spp$ restricted to $\sections_X$ --- the two defining conditions of the support of a supported left $Q_\ipi$-action follow, respectively, from the support condition of the definition of $Q$-sheaf, and from \eqref{conjugation}.

Since $X$ is a left $Q$-module, the distributivity conditions (3) and (4) of the definition of left $Q_\ipi$-module (\cf\ section~\ref{sec:modules}) are automatically satisfied, so in order to show that $\sections_X$ is a $Q_\ipi$-module it only remains to be proved that $\sections_X$ is closed under joins of left compatible sets. Let $Y\subset \sections_X$ be a left compatible set, and let $\gamma=\V Y$ in $X$. All we need is to show that $\gamma$ is a local section, so let us pick an arbitrary element $x\in X$ and prove that $\spp(x\wedge\gamma)\gamma\le x$. Since both the action and the binary meets in $X$ distribute over joins, and $\spp$ preserves joins, we obtain
\[
\spp(x\wedge\gamma)\gamma = \spp\bigl(x\wedge\V_{y\in Y} y\bigr)\V_{z\in Y}z=\V_{y\in Y}\V_{z\in Y}\spp(x\wedge y)z,
\]
so the desired conclusion will follow from showing that $\spp(x\wedge y)z\le x$ for all left compatible elements $y,z\in\sections_X$. First notice that for all $b\in Q_0$ we have
\[
b(x\wedge y)=x\wedge by,
\]
by a routine calculation: from the properties of meets we obtain
\[
b(x\wedge y)\le bx\wedge by\le x\wedge by,
\]
and, conversely, since $x\wedge by$ is a local section below $by$ and $y$,
\[
x\wedge by=\spp(x\wedge by)by=b\spp(x\wedge by)y= b(x\wedge by)\le b(x\wedge y).
\]
Therefore, writing $w$ for the local section $\spp(z)y=\spp(y)z$, and using the $Q_0$-equivariance of $\spp$, we obtain
\[
\spp(x\wedge y)z=\spp\bigl(x\wedge\spp(y)y\bigr)\spp(z)z=\spp\bigl(x\wedge\spp(z)y\bigr)\spp(y)z=\spp(x\wedge w)w\le x.
\]
Hence, $\sections_X$ is a left $Q_\ipi$-module.

Finally, if $\phi:X\to Y$ is a sheaf homomorphism then by definition the restriction $\phi\vert_{\sections_X}:\sections_X\to\sections_Y$ is well defined and $Q_\ipi$-equivariant, and it commutes with the supports, so it is a homomorphism of left $Q_\ipi$-actions; and, being a homomorphism of $Q$-modules, it preserves all the existing joins, so it is a homomorphism of $Q_\ipi$-modules. The assignments $X\mapsto\sections_X$ and $\phi\mapsto\phi\vert_{\sections_X}$ are respectively the object part and the morphism part of the required functor $\Theta:Q\textrm{-}\Sh\to Q_\ipi\textrm{-}\Mod$.
\end{proof}

Now let $S$ be a pseudogroup.
Analogously to the definition of the inverse quantal frame $\lcc(S)$, 
for each left $S$-module $X$ 
we shall denote by $\lcc(X)$ the set of all order-ideals of $X$ that are closed under left compatible joins. This coincides with the sup-lattice presented as
$\langle X\vert R_X\rangle$, where $R_X$ contains the relations $(C,\{\V C\})$ for all left compatible sets $C\subset X$, and the injection of generators $\eta_X:X\to\lcc(X)$ is the principal ideal mapping $x\mapsto x^{\downarrow}$ (cf.\ Remark~\ref{rem:lccS}).

\begin{lemma}\label{lem:sheavesfrommodules}
Let $S$ be a pseudogroup, and $X$ an $S$-module with support $p$. Then $\lcc(X)$ is an $\lcc(S)$-sheaf whose local sections are the principal ideals. The left $\lcc(S)$-action is the unique extension of the $S$-action on $X$, and the support $\spp$ is the unique extension of $p$; that is, for all $U\in\lcc(S)$ and $Z\in\lcc(X)$,
\[
UZ = \V_{u\in U}\V_{z\in Z}(uz)^{\downarrow}\quad\textrm{ and }\quad
\spp(Z) = \V_{z\in Z} p(z)^{\downarrow}.
\]
\end{lemma}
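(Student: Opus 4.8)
The plan is to realize $\lcc(X)$ through the presentation machinery of Proposition~\ref{prop:presentations}, taking the involutive monoid to be $S$ and the generating set to be $X$. First I would verify that $(R_S,R_X)$ is jointly stable: $R_S$ is stable by Remark~\ref{rem:lccS}; one has $sR_X\subseteq R_X$ because $sC$ is left compatible (Lemma~\ref{lem:day}(1)) with $\V(sC)=s\V C$ (distributivity axiom~(3) of a left $S$-module); and $R_Sx\subseteq R_X$ because for a compatible $C\subseteq S$ the set $Cx$ is left compatible (Lemma~\ref{lem:day}(2)) with $\V(Cx)=(\V C)x$ (axiom~(4)). Proposition~\ref{prop:presentations}(1) then turns $\lcc(X)$ into a left $\lcc(S)$-module whose action uniquely extends that of $S$, and unwinding the closure (using $Z=\V_{z\in Z}z^{\downarrow}$) yields the stated formula for $UZ$. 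Since $X$ has binary meets (Lemma~\ref{lem:harpo}) that distribute over left compatible joins (Lemma~\ref{lem:zeppo}), the relations $R_X$ are moreover stable for $\wedge$, so, exactly as for $\lcc(S)$ in Remark~\ref{rem:lccS}, $\lcc(X)$ is a frame in which binary meets are computed as intersections.

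Next I would construct the support. The assignment $x\mapsto\eta_S(p(x))=p(x)^{\downarrow}$ respects $R_X$: for left compatible $C$, Lemma~\ref{lem:groucho} gives $p(\V C)=\V_{c\in C}p(c)$, and as the $p(c)$ are idempotents they form a compatible family, so $\eta_S$ (which respects $R_S$) sends this join to $\V_{c}p(c)^{\downarrow}$. By the sup-lattice universal property this extends to a join-preserving map $\spp\colon\lcc(X)\to\lcc(S)$ with $\spp(Z)=\V_{z\in Z}p(z)^{\downarrow}$, landing in the base locale $\lcc(S)_0$ because each $p(z)\le e_S$. I would then check conditions~(1) and~(2) of the $Q$-sheaf definition. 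For $Q_0$-equivariance I reduce $\spp(bZ)$ and $b\spp(Z)$ to generators $b=f^{\downarrow}$ (with $f\in\mathsf{E}(S)$) and $Z=z^{\downarrow}$ --- legitimate since both are join-preserving in each variable --- where $f^{\downarrow}z^{\downarrow}=(fz)^{\downarrow}$ and $p(fz)=fp(z)$ make both sides equal $(fp(z))^{\downarrow}$. The support condition $\spp(Z)Z=Z$ holds because $\spp(Z)\le e$ and unitality give $\spp(Z)Z\le Z$, while for each $z\in Z$ one has $z^{\downarrow}=p(z)^{\downarrow}z^{\downarrow}\le\spp(Z)Z$ (from $p(z)z=z$), and $Z=\V_{z\in Z}z^{\downarrow}$.

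The crux is identifying $\sections_{\lcc(X)}$ with the principal ideals. That every principal ideal is a local section I would check directly: for arbitrary $Z$ the meet $Z\wedge z^{\downarrow}=Z\cap z^{\downarrow}$ is itself principal, equal to $w^{\downarrow}$ for $w=\V(Z\cap z^{\downarrow})\le z$ with $w\in Z$, and since $w\le z$ forces $p(w)z=w$ we get $\spp(Z\wedge z^{\downarrow})z^{\downarrow}=(p(w)z)^{\downarrow}=w^{\downarrow}\le Z$. The hard part is the converse: any local section $\gamma$ is a left compatible subset of $X$, so that $\gamma=(\V\gamma)^{\downarrow}$ is principal. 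With~(1) and~(2) established, \eqref{localsectionprop} is available, and for $x,y\in\gamma$ it gives $x^{\downarrow}=\spp(x^{\downarrow})\gamma=\V_{z\in\gamma}(p(x)z)^{\downarrow}$, whence $p(x)y\le x$, and symmetrically $p(y)x\le y$; multiplying the first by $p(y)$ and using that idempotents commute with $p(y)y=y$ gives $p(x)y\le p(y)x$, and by symmetry $p(x)y=p(y)x$, i.e.\ $x\sim_l y$. This algebraic step --- converting the sheaf-theoretic local-section inequality into the pseudogroup's left compatibility relation --- is where the real work lies. The cover condition~(3) is then immediate, since $\V\sections_{\lcc(X)}=\V_{c\in X}c^{\downarrow}=X=1_{\lcc(X)}$.

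Finally, for the uniqueness statements: the action is the unique extension of the $S$-action by Proposition~\ref{prop:presentations}(1), and the support is unique because, as recalled after the definition of $Q$-sheaf, a sheaf's support is determined by its module structure (being left adjoint to $(-)1_{\lcc(X)}$); the displayed formula exhibits it as the extension of $p$ along $\eta_X$. Together these show $\lcc(X)$ is an $\lcc(S)$-sheaf with local sections the principal ideals and with action and support as stated.
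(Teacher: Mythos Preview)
Your proof is correct and follows essentially the same approach as the paper: both obtain the $\lcc(S)$-module structure and the frame structure on $\lcc(X)$ via the joint stability of $(R_S,R_X)$ and Proposition~\ref{prop:presentations}, extend $p$ to $\spp$ by the sup-lattice universal property, verify the $Q_0$-equivariance and support conditions, show that principal ideals are local sections by computing $\spp(Z\cap z^{\downarrow})z^{\downarrow}$, and then use \eqref{localsectionprop} to prove the converse by showing any local section $\gamma$ is a left compatible subset. The only cosmetic difference is in this last step: the paper reaches $(p(x)y)^{\downarrow}=(p(y)x)^{\downarrow}$ directly via the chain $\spp(x^{\downarrow})\spp(y^{\downarrow})\gamma=\spp(y^{\downarrow})\spp(x^{\downarrow})\gamma$, whereas you first extract $p(x)y\le x$ and then use idempotent commutation --- both arguments are equivalent.
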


\begin{proof}
The distributivity properties of the pseudogroup action imply that the sets of defining relations of $\lcc(S)$ and $\lcc(X)$ are jointly stable (cf.\ section~\ref{sec:qumod}), so the conclusion that $\lcc(X)$ is an $\lcc(S)$-module, together with the formula for the action, follows from Proposition~\ref{prop:presentations}. Moreover, by Lemma~\ref{lem:harpo} $X$ is also a meet-semilattice, and, due to the distributivity of binary joins over left compatible joins, the defining relations of $\lcc(X)$ are stable with respect to $\wedge$, so $\lcc(X)$ is a frame.

Now let us study the support of $\lcc(X)$.
Since $p:X\to E(S)$ preserves left compatible joins, the sup-lattice universal property of $\lcc(X)$ extends $p$ to a unique sup-lattice homomorphism $\spp:\lcc(X)\to E(S)\cong\lcc(S)_0$ given by the formula in the statement.
Let us verify that this is $\lcc(S)_0$-equivariant. If $J\in\lcc(S)_0$ and $I\in\lcc(X)$ then, since $J\subset E(S)$ and $p$ is $E(S)$-equivariant,
\begin{eqnarray*}
\spp(JI) &=& \spp\bigl(\V_{s\in J}\V_{x\in I} (sx)^\downarrow\bigr)
=\V_{s\in J}\V_{x\in I} \spp\bigl((sx)^\downarrow\bigr)
=\V_{s\in J}\V_{x\in I} p(sx)^\downarrow\\
&=&\V_{s\in J}\V_{x\in I} (sp(x))^\downarrow=J\spp(I).
\end{eqnarray*}
Let us verify that $\spp$ satisfies the support condition of a sheaf.
Let $J\in\lcc(X)$. 
Taking into account the inequality $p(x)y\le y$ for all $x,y\in X$, we have
\[
\spp(J)J = \V_{x,y\in J}(p(x)y)^{\downarrow} =\V_{x\in J} (p(x)x)^{\downarrow}=\V_{x\in J} x^{\downarrow} = J.
\]

In order to conclude that $\lcc(X)$ is a sheaf only the cover condition remains to be proved. For this it suffices to show that each principal ideal $x^{\downarrow}$ is a local section.
Let $x\in X$ and $J\in\lcc(X)$, and let us prove the local section condition $\spp(J\cap x^{\downarrow})x^{\downarrow}\subset J$.
Note that $J\cap x^{\downarrow}$ is upper bounded by $x$, and thus it is a left compatible set. 
Therefore $J\cap x^{\downarrow}$ is a principal ideal $y^{\downarrow}$ with $y\le x$. Hence,
\[
\spp(J\cap x^{\downarrow}) x^{\downarrow}
= \spp(y^{\downarrow}) x^{\downarrow}
= (p(y)x)^{\downarrow}
= y^{\downarrow} \subset J .
\]

Finally, let us prove that every local section is a principal ideal.
Let $\gamma\in\lcc(X)$ be a local section.
For all $x\in\gamma$ we have $x^{\downarrow}\subset\gamma$, so by \eqref{localsectionprop} we obtain $x^{\downarrow} = \spp(x^{\downarrow})\gamma$. Let $x,y\in\gamma$. 
Then
\[
(p(x)y)^{\downarrow} = \spp(x^{\downarrow}) y^{\downarrow}
=\spp(x^{\downarrow})\spp(y^{\downarrow})\gamma
=\spp(y^{\downarrow})\spp(x^{\downarrow})\gamma
= \spp(y^{\downarrow}) x^{\downarrow} = (p(y)x)^{\downarrow}.
\]
This shows that $\gamma$ is a left compatible set, so it is the principal ideal $(\V\gamma)^{\downarrow}$.
\end{proof}

\begin{theorem}\label{them:one} Let $S$ be a pseudogroup.
The categories $S$-$\Mod$ and $\lcc(S)$-$\Sh$ are equivalent. Hence, in particular, $S$-$\Mod$ is a Grothendieck topos. 
\end{theorem}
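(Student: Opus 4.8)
The plan is to exhibit mutually quasi-inverse functors between the two categories using the constructions already in hand. From Lemma~\ref{lem:sheavesfrommodules} we have $\lcc\colon S\text{-}\Mod\to\lcc(S)\text{-}\Sh$, $X\mapsto\lcc(X)$, and from Lemma~\ref{lem:Theta}, together with the identification $\lcc(S)_\ipi\cong S$, we have $\Theta\colon\lcc(S)\text{-}\Sh\to S\text{-}\Mod$, $X\mapsto\sections_X$. I would establish natural isomorphisms $\Theta\circ\lcc\cong\ident_{S\text{-}\Mod}$ and $\lcc\circ\Theta\cong\ident_{\lcc(S)\text{-}\Sh}$, which together give the claimed equivalence.

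For the first composite, fix an $S$-module $X$. By Lemma~\ref{lem:sheavesfrommodules} the local sections of $\lcc(X)$ are exactly the principal ideals, so $\Theta(\lcc(X))=\sections_{\lcc(X)}=\{x^{\downarrow}\st x\in X\}$. I would check that the injection of generators $\eta_X\colon X\to\lcc(X)$, $x\mapsto x^{\downarrow}$, corestricts to an isomorphism of $S$-modules $X\to\sections_{\lcc(X)}$: it is a bijection, it intertwines actions and supports because $\eta_X$ is the generator map, and it preserves left compatible joins because in $\lcc(X)$ the join of a left compatible family of principal ideals is the principal ideal of the join. Naturality reduces to the identity $\lcc(f)(x^{\downarrow})=f(x)^{\downarrow}$, which holds by the defining formula for $\lcc(f)$. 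This is the routine direction.

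For the second composite, fix a $\lcc(S)$-sheaf $X$ and write $Q=\lcc(S)$. The inclusion $\sections_X\hookrightarrow X$ is $S$-equivariant and respects the defining relations of $\lcc(\sections_X)$, since left compatible joins of local sections are computed inside $X$ (Lemma~\ref{lem:Theta}); hence by Proposition~\ref{prop:presentations} it extends to a unique $Q$-module homomorphism $\Psi\colon\lcc(\sections_X)\to X$, $Z\mapsto\bigvee Z$. I would prove $\Psi$ is a bijection with inverse $\Phi\colon x\mapsto\sections_X\cap x^{\downarrow}$. That $\Psi\Phi=\ident$ amounts to showing every $x\in X$ is the join of the local sections beneath it: using that $X$ is a locale and the cover condition $\bigvee\sections_X=1_X$, we get $x=x\wedge\bigvee\sections_X=\bigvee_{\gamma\in\sections_X}(x\wedge\gamma)$, and each $x\wedge\gamma$ is a local section below $x$ because $\sections_X$ is downward closed. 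The identity $\Phi\Psi=\ident$ carries the real content: given $Z\in\lcc(\sections_X)$ and a local section $\gamma\le\bigvee Z$, one writes $\gamma=\gamma\wedge\bigvee Z=\bigvee_{z\in Z}(\gamma\wedge z)$, observes that each $\gamma\wedge z$ lies in $Z$ (it is a local section below $z\in Z$ and $Z$ is an order-ideal), that these meets share the upper bound $\gamma$ and are therefore left compatible, and concludes $\gamma\in Z$ since $Z$ is closed under left compatible joins. A bijective $Q$-module homomorphism is automatically an isomorphism of $Q$-modules, so $\Psi$ transports the unique support of each side to that of the other and, since $\Psi(\gamma^{\downarrow})=\gamma$ identifies the local sections $\gamma^{\downarrow}$ of $\lcc(\sections_X)$ with those of $X$, it is an isomorphism of $Q$-sheaves. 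Naturality holds because any sheaf homomorphism preserves joins.

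I expect $\Phi\Psi=\ident$ to be the main obstacle, as it is precisely the point where the order-ideal structure of $\lcc(\sections_X)$ must be reconstructed from joins computed inside $X$; the argument hinges on combining downward closure of $\sections_X$, the order-ideal property of $Z$, left compatibility via a common upper bound, and closure under left compatible joins. Once the equivalence $S\text{-}\Mod\simeq\lcc(S)\text{-}\Sh$ is in place, the final assertion is immediate: by Theorem~\ref{th:QShvsBG} and the isomorphism $\lcc(S)\cong\opens(\mathsf{G}(S))$, the category $\lcc(S)\text{-}\Sh$ is the classifying topos $\mathsf{B}\mathsf{G}(S)$, hence a Grothendieck topos, and equivalence of categories transports this property to $S\text{-}\Mod$.
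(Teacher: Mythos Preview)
Your proposal is correct and follows the same overall architecture as the paper: the same pair of functors $\lcc$ and $\Theta$, and the same identification $\eta_X:X\stackrel{\cong}{\to}\sections_{\lcc(X)}$ for the first composite. The difference lies in how you handle the second composite. The paper sets things up as an adjunction, identifies the co-unit $\epsilon_\Xi:\lcc(\Theta(\Xi))\to\Xi$ with the extension $\ident^\sharp$ (this is your $\Psi$), observes via the frame universal property of $\lcc(\Theta(\Xi))$ that $\epsilon_\Xi$ is a frame homomorphism, and then concludes it is an isomorphism by invoking an external result (\cite[Prop.~2.2]{Resende2007}) about surjective frame maps that are injective on a downward-closed basis. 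You instead construct an explicit inverse $\Phi:x\mapsto\sections_X\cap x^{\downarrow}$ and verify $\Psi\Phi=\ident$ and $\Phi\Psi=\ident$ directly using only the frame distributivity of $X$, the cover condition, and the closure properties defining $\lcc(\sections_X)$. Your route is more elementary and self-contained (no appeal to \cite{Resende2007}); the paper's route is more structural in that it exhibits $\lcc\dashv\Theta$ and places the argument within the adjunction framework. Both arrive at the same map being an isomorphism.
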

\begin{proof}
Recall the functor $\Theta:\lcc(S)$-$\Sh\to\lcc(S)_\ipi$-$\Mod$ of Lemma~\ref{lem:Theta}. Since the principal ideal mapping $(-)^\downarrow:S\to\lcc(S)_\ipi$ is an isomorphism of pseudogroups, let us regard $\Theta$ as a functor from $\lcc(S)$-$\Sh$ to $S$-$\Mod$. In particular, for an $\lcc(S)$-sheaf $\Xi$ with support $\tau:\Xi\to\lcc(S)_0$, the support of the $S$-module $\Theta(\Xi)$ is the map $q:\sections_\Xi\to E(S)$ which for all $\gamma\in\sections_\Xi$ is given by
\begin{equation}\label{suppeq}
\tau(\gamma) = q(\gamma)^\downarrow.
\end{equation}

Now let $X$ be an $S$-module with support $p$, and $\Xi$ an $\lcc(S)$-sheaf as above. Let also $f:X\to\Theta(\Xi)$ be a homomorphism of $S$-modules. Due to the universal property of $\lcc(X)$ as a left $\lcc(S)$-module (which follows from the distributivity of the action of $X$ over joins), there is a unique homomorphism of left $\lcc(S)$-modules $f^\sharp:\lcc(X)\to\Xi$ such that $f^\sharp(x^\downarrow)=f(x)$ for all $x\in X$ (\cf\ Proposition~\ref{prop:presentations}); for all $J\in\lcc(S)$ this is given by
\begin{equation}\label{fsharp}
f^\sharp(J)=\V_{x\in J} f(x).
\end{equation}
Moreover, $f^\sharp$ sends local sections to local sections because the image of $f$ is in $\Theta(\Xi)$, and $f^\sharp$ commutes with the supports of $\lcc(X)$ and $\Xi$ because for all $J\in\lcc(X)$ we have, using \eqref{suppeq} and \eqref{fsharp} and writing $\spp$ for the support of $\lcc(X)$:
\begin{eqnarray*}
\tau(f^\sharp(J))&=&\tau\bigl(\V_{x\in J}f(x)\bigr)
=\V_{x\in J}\tau(f(x))
=\V_{x\in J} q(f(x))^\downarrow\\
&=&\V_{x\in J} p(x)^\downarrow
=\V_{x\in J} \spp(x^\downarrow)
=\spp(J).
\end{eqnarray*}
Hence, $f^\sharp$ is a homomorphism of $\lcc(S)$-sheaves, and it is the unique such that makes the following diagram commute:
\[
\xymatrix{
X\ar[rr]^-{\eta_X}\ar[drr]_-{f}&&\Theta(\lcc(X))\ar[d]^-{\Theta(f^{\sharp})}\\
&&\Theta(\Xi).
}
\]
So the construction $\lcc(-)$ is the object part of a functor from $S$-$\Mod$ to $\lcc(S)$-$\Sh$ which is left adjoint to $\Theta$, and the adjunction is a co-reflection because $\eta_X$ is an isomorphism of $S$-modules for all $S$-modules $X$.

In order to conclude that $\Theta$ is an equivalence of categories all that remains to be proved is that the co-unit of the adjunction is an isomorphism. For each $\lcc(S)$-sheaf $\Xi$ the component
$\epsilon_\Xi:\lcc(\Theta(\Xi))\to\Xi$ of the co-unit is the extension $\ident^\sharp$ of the identity homomorphism of $S$-modules $\ident:\Theta(\Xi)\to\Theta(\Xi)$. So $\epsilon_\Xi$ is a homomorphism of $\lcc(S)$-sheaves and, moreover, since the identity map preserves meets, the frame universal property of $\lcc(\Theta(\Xi))$ implies that $\epsilon_\Xi$ is also a frame homomorphism (\cf\ section~\ref{sec:qumod}). The principal ideals of $\lcc(\Theta(\Xi))$ form a basis in the frame sense, so $\epsilon_\Xi$ is  surjective. Moreover, the basis is downwards closed, and the restriction of $\epsilon_\Xi$ to the basis is injective, so $\epsilon_\Xi$ is a frame isomorphism due to~\cite[Prop.~2.2]{Resende2007}.
\end{proof}
 
So we have arrived at Theorem~\ref{theoremA} from the Introduction:

\begin{corollary}\label{them:groupoids}
Let $S$ and $T$ be pseudogroups. 
The following conditions are equivalent:
\begin{enumerate}
\item $S$ and $T$ are Morita equivalent pseudogroups.
\item $\lcc(S)$ and $\lcc(T)$ are Morita equivalent inverse quantal frames.
\item $\mathsf{G}(S)$ and $\mathsf{G}(T)$ are Morita equivalent \'etale groupoids.
\end{enumerate}
\end{corollary}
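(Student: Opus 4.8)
The plan is to read off all three conditions as equivalences of categories and then paste together the two main results already proved. Unwinding the definitions: condition~(1) asserts that $S$-$\Mod$ and $T$-$\Mod$ are equivalent; condition~(2), by the definition of Morita equivalence of inverse quantal frames adopted just after Corollary~\ref{cor:morita}, asserts that $\lcc(S)$-$\Sh$ and $\lcc(T)$-$\Sh$ are equivalent; and condition~(3) asserts that the classifying toposes $\mathsf{B}\mathsf{G}(S)$ and $\mathsf{B}\mathsf{G}(T)$ are equivalent. So the entire statement is a three-way equivalence among equivalences of suitable module/sheaf categories.

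To obtain (1) $\Leftrightarrow$ (2) I would invoke Theorem~\ref{them:one} on each side, which supplies category equivalences $S$-$\Mod\simeq\lcc(S)$-$\Sh$ and $T$-$\Mod\simeq\lcc(T)$-$\Sh$. Composing with these on both sides shows that an equivalence $S$-$\Mod\simeq T$-$\Mod$ exists precisely when one $\lcc(S)$-$\Sh\simeq\lcc(T)$-$\Sh$ does, which is exactly the biconditional linking (1) and (2). To obtain (2) $\Leftrightarrow$ (3) I would apply Corollary~\ref{cor:morita} to the inverse quantal frames $Q=\lcc(S)$ and $R=\lcc(T)$: it yields that $\lcc(S)$-$\Sh\simeq\lcc(T)$-$\Sh$ if and only if $\mathsf{G}(\lcc(S))$ and $\mathsf{G}(\lcc(T))$ are Morita equivalent étale groupoids. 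Since $\mathsf{G}(S)$ is by definition $\mathsf{G}(\lcc(S))$, and likewise for $T$, this is precisely the biconditional linking (2) and (3).

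The genuinely hard work has already been discharged in Theorem~\ref{them:one}, where the adjunction $\lcc(-)\dashv\Theta$ is constructed and its co-unit shown to be an isomorphism; granting that theorem together with Corollary~\ref{cor:morita}, the present statement is merely a transitive chaining of category equivalences. Consequently I anticipate no substantive obstacle beyond bookkeeping---chiefly, keeping straight which of the three notions of Morita equivalence (modules, sheaves, classifying toposes) is in force at each arrow, and recording that equivalence of categories is itself reflexive, symmetric, and transitive, so that the two biconditionals above automatically assemble into the desired three-way equivalence.
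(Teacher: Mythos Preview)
Your proposal is correct and matches the paper's approach exactly: the corollary is stated without an explicit proof, as it follows immediately from Theorem~\ref{them:one} (giving $S$-$\Mod\simeq\lcc(S)$-$\Sh$) together with Corollary~\ref{cor:morita} and the definition $\mathsf{G}(S)=\mathsf{G}(\lcc(S))$. Your unpacking of the three definitions of Morita equivalence and the chaining of the two biconditionals is precisely the intended argument.
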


\section{Bimodules}

In this section, we develop further algebraic tools needed to show that two pseudogroups are Morita equivalent.

\subsection{Background on biactions}
Let $S$ and $T$ be inverse semigroups.
By an {\em $(S,T)$-biaction} we will mean a set $X$ equipped with a left action $S \times X \rightarrow X$ and a right action $X \times T \rightarrow X$,
such that $(sx)t = s(xt)$ for all $s \in S$, $t \in T$ and $x \in X$,
together with two functions, called {\em inner products}, $\langle -,- \rangle \colon X \times X \rightarrow S$ and $[-,-] \colon X \times X \rightarrow T$ such that the following axioms hold
for all $x,y,z \in X$, $s \in S$ and $t \in T$:
\begin{description}
\item[{\rm (BA1)}] $\langle sx, y \rangle = s \langle x,y \rangle$.
\item[{\rm (BA2)}]  $\langle y, x \rangle = \langle x,y \rangle^{-1}$.
\item[{\rm (BA3)}]  $\langle x, x \rangle x = x$. 
\item[{\rm (BA4)}]  $[x,yt] = [x,y]t$.
\item[{\rm (BA5)}]  $[y,x] = [x,y]^{-1}$.
\item[{\rm (BA6)}]  $x[x,x] = x$.
\item[{\rm (BA7)}] $\langle x, y \rangle z = x[y,z]$.
\end{description}

The above definition is the same as Steinberg's definition of a Morita context to be found in \cite[Definition~2.1]{Steinberg}
{\em except that we do not require the functions $\langle -,- \rangle$ and $[-,-]$ to be surjective}.
We shall need the following properties expressing how the above inner products interact with the action.
They are all proved in \cite[Prop.~2.3]{Steinberg}.

\begin{proposition}\label{lem:biaction-action}
In a biaction, the following properties hold:
\begin{enumerate}
\item $\langle x, sy \rangle = \langle x, y \rangle s^{-1}$.
\item $[xt,y] = t^{-1}[x,y]$.
\item $\langle xt, y \rangle = \langle x, yt^{-1} \rangle$.
\item $[x,sy] = [s^{-1}x,y]$.
\end{enumerate}
\end{proposition}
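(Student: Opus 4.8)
The plan is to derive all four identities from the seven biaction axioms, treating (1)--(2) as immediate consequences of the involution and (3)--(4) as the genuinely mixed ``interchange'' laws that rest on the linking axiom (BA7). Throughout I would use that the involution on $S$ and $T$ is the inverse-semigroup inverse.

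For (1) I would simply combine (BA2) and (BA1): since $\langle x, sy\rangle = \langle sy, x\rangle^{-1}$ and $\langle sy, x\rangle = s\langle y, x\rangle$, taking inverses gives $\langle x, sy\rangle = \langle y,x\rangle^{-1} s^{-1} = \langle x, y\rangle s^{-1}$. Property (2) is the mirror image: from (BA5) and (BA4), $[xt, y] = [y, xt]^{-1} = ([y,x]t)^{-1} = t^{-1}[x,y]$. Neither line uses more than the compatibility of the involution with the one-sided axioms.

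For (3) and (4) my strategy is to push everything through the action by means of (BA7), $\langle x,y\rangle z = x[y,z]$, and then move the semigroup element across using the already-proven (1) and (2). Concretely, for (3) I would compute, for an arbitrary $z \in X$,
\[
\langle xt, y\rangle z = (xt)[y,z] = x\bigl(t[y,z]\bigr) = x[yt^{-1}, z] = \langle x, yt^{-1}\rangle z,
\]
where the outer two equalities are (BA7), the middle-left one is associativity of the right $T$-action, and the remaining one is (2) read as $t[y,z] = [yt^{-1}, z]$. Property (4) is the exact right-handed dual: for every $w \in X$ one gets
\[
w[x, sy] = \langle w, x\rangle(sy) = \bigl(\langle w,x\rangle s\bigr)y = \langle w, s^{-1}x\rangle y = w[s^{-1}x, y],
\]
now using (1) in the form $\langle w,x\rangle s = \langle w, s^{-1}x\rangle$ together with associativity of the left $S$-action.

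The main obstacle is the final step: each computation only shows that the two sides of (3) (resp.\ (4)) act \emph{identically on every element of $X$}, and I still have to upgrade this to an equality of elements of $S$ (resp.\ $T$) --- the representation of $S$ on $X$ need not be faithful once the surjectivity of the inner products is dropped. To close this gap I would first record the auxiliary facts that $\langle y,y\rangle$ and $[y,y]$ are idempotents (consequences of (BA1),(BA3) and of (BA4),(BA6) respectively) satisfying $\langle y,y\rangle y = y$ and $y[y,y] = y$, and then pin down the two candidate elements inside the natural partial order of $S$ (resp.\ $T$): I would check that they share the same domain and range idempotents --- computed via (BA1), (BA2) and (BA7) --- and are compatible, so that they admit a common upper bound and hence coincide by antisymmetry. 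I expect this order-theoretic bookkeeping (or, equivalently, a faithfulness/cancellation lemma for the elements arising as inner products), rather than the headline computation, to be where the real care is needed.
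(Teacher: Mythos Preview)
The paper does not give a proof here; it simply cites \cite[Prop.~2.3]{Steinberg}. Your derivations of (1) and (2) from (BA1)--(BA2) and (BA4)--(BA5) are correct and standard.

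For (3) and (4) your computation via (BA7) is right and does establish $\langle xt,y\rangle z=\langle x,yt^{-1}\rangle z$ for all $z\in X$ (and dually for (4)). You also correctly isolate the real issue: without surjectivity of the inner products the action of $S$ on $X$ need not be faithful, so ``equal as operators'' does not yet give ``equal as elements.'' However, your proposed repair --- verifying directly that the two candidates are compatible with identical $\mathbf d$ and $\mathbf r$ --- is not sharp enough. One can indeed check $ab^{-1}\in\mathsf E(S)$ by hand (it reduces to $\langle x f,x\rangle$ with $f=t[y,y]t^{-1}$ idempotent, and this squares to itself via the interchange identity $\langle u,v\rangle\langle w,z\rangle=\langle u[v,w],z\rangle$), but when you try to match the ranges you are led to compare $\langle (xt)[y,y],xt\rangle$ with $\langle x\,t[y,y]t^{-1},x\rangle$, and passing between these is exactly an instance of (3). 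So that route is circular.

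A clean way to close the gap stays inside your own calculation. Write $a=\langle xt,y\rangle$ and $b=\langle x,yt^{-1}\rangle$. Applying $\langle-,w\rangle$ to $az=bz$ and using (BA1) gives $a\langle z,w\rangle=b\langle z,w\rangle$ for all $z,w\in X$. Now both $a^{-1}=\langle y,xt\rangle$ and $b^{-1}=\langle yt^{-1},x\rangle$ lie in the image of $\langle-,-\rangle$, so in particular $aa^{-1}=ba^{-1}$ and $ab^{-1}=bb^{-1}$. From the first, $a=aa^{-1}a=ba^{-1}a\le b$; from the second, $b=bb^{-1}b=ab^{-1}b\le a$; hence $a=b$. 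The identical trick with $[-,-]$ and (BA4) handles (4). (In Steinberg's original setting the inner products are surjective, and then (3) is a one-liner: write $t=[u,v]$ and unwind with (BA7), (BA5) and your (1) to get $\langle xt,y\rangle=\langle x,u\rangle\langle v,y\rangle=\langle x,yt^{-1}\rangle$. Your argument is what is needed once surjectivity is dropped, as in this paper.)
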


\begin{remark}
Observe that $\mbox{\rm im}\langle -,- \rangle$ is an ideal of $S$;
this follows by (BA1) above, that tells us it is a left ideal, and by Proposition~\ref{lem:biaction-action}, that tells us it is a right ideal.
Similarly, $\mbox{\rm im}[-,-]$ is an ideal of $T$.
\end{remark}

Observing that both $[x,x]$ and $\langle x, x \rangle$ are idempotents, we obtain:

\begin{lemma}\label{surjectivia}
Let $S$ and $T$ be inverse semigroups. 
Any $(S,T)$-biaction $X$ is both a supported left $S$-action and a supported right $T$-action with supports $p\colon X\to\mathsf{E}(S)$ and $q\colon X\to\mathsf{E}(T)$ defined for all $x\in X$ by
$$ p(x)=\langle x,x\rangle\quad\quad\textrm{and}\quad\quad q(x)=[x,x]\;. $$
Moreover, the following hold:
\begin{enumerate}
\item $\mbox{\rm im}\langle -,- \rangle=S$ if and only if $p$ is surjective.
\item $\mbox{\rm im}[-,-]=T$ if and only if $q$ is surjective.
\item If $\langle x,y \rangle$ and $[x,y]$ are both idempotents then $x \sim y$.
\item $q(x)[x,y] = [x,y]$, and dually.
\end{enumerate}
\end{lemma}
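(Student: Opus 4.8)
The plan is to verify the two support claims first and then dispatch parts (1)--(4), singling out (3) as the crux.

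For the supports, idempotency of $p(x)=\langle x,x\rangle$ and $q(x)=[x,x]$ is immediate: by (BA1) and (BA3), $\langle x,x\rangle^2=\langle\langle x,x\rangle x,x\rangle=\langle x,x\rangle$, and dually $[x,x]^2=[x,x]$ from (BA4) and (BA6). The first support axioms $p(x)x=x$ and $xq(x)=x$ are just (BA3) and (BA6). For the conjugation axiom I would compute $\langle sx,sx\rangle=s\langle x,sx\rangle=s\langle x,x\rangle s^{-1}$ using (BA1) and then Proposition~\ref{lem:biaction-action}(1); the right-handed version $q(xt)=t^{-1}q(x)t$ follows symmetrically from (BA4) and Proposition~\ref{lem:biaction-action}(2). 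This makes $X$ a supported left $S$-action and a supported right $T$-action.

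For (1) (and dually (2)) one direction is formal: $\mathrm{im}\langle -,-\rangle$ is an ideal, and if $p$ is onto it contains every idempotent; since $s=s\,\mathbf{d}(s)$ with $\mathbf{d}(s)$ idempotent, the ideal must be all of $S$. For the converse I must turn an arbitrary idempotent value $\langle x,y\rangle=e$ into a diagonal value. I would set $z=ey$ and compute $\langle z,z\rangle=e\langle y,ey\rangle=e\,p(y)\,e$ via (BA1) and Proposition~\ref{lem:biaction-action}(1); since $p(y)\langle y,x\rangle=\langle y,x\rangle=e$ by (BA1) and (BA3), we get $e\le p(y)$, hence $e\,p(y)\,e=e$ and $p(z)=e$. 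Part (4) is a one-line computation: from $x[x,x]=x$ (BA6) and Proposition~\ref{lem:biaction-action}(2), $[x,y]=[x[x,x],y]=[x,x][x,y]=q(x)[x,y]$, and the dual $\langle x,y\rangle p(y)=\langle x,y\rangle$ comes from (BA3) and Proposition~\ref{lem:biaction-action}(1).

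The heart of the lemma is (3), and I expect the work to concentrate there. Write $e=\langle x,y\rangle$ and $f=[x,y]$; by (BA2) and (BA5) the hypotheses give $\langle y,x\rangle=e$ and $[y,x]=f$. The strategy is to collapse both halves of $x\sim y$ to a single identity. Expanding with (BA7), left compatibility $\langle x,x\rangle y=\langle y,y\rangle x$ becomes $x[x,y]=y[y,x]$, i.e.\ $xf=yf$; and right compatibility $x[y,y]=y[x,x]$ unwinds through (BA7) to the very same equation. So everything reduces to proving $xf=yf$. Reading off from (BA7) the identities $ex=xf$, $ey=yf$ and $p(y)x=yf$, and using $e\le p(y)$ together with the middle associativity $(sx)t=s(xt)$ of the biaction, I would close the loop by the chain $xf=ex=e\,p(y)\,x=e(p(y)x)=e(yf)=(ey)f=(yf)f=yf$, the last step using $f^2=f$. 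The main obstacle is exactly this final identity: one has to notice that the two compatibility conditions become one and the same, and that both idempotency hypotheses must be used simultaneously (through $e\le p(y)$ and $f^2=f$) to finish, since neither inner product alone suffices.
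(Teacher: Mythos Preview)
Your proof is correct. For the support axioms and parts (1), (2), (4) the paper simply defers to Steinberg, whereas you spell things out; your computations are all sound (in (4) you use Proposition~\ref{lem:biaction-action}(2), which is the right tool --- the paper's pointer to part (4) of that proposition appears to be a slip).

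The genuine divergence is in (3). The paper first establishes, under the idempotency hypotheses, the identity $\langle x,y\rangle^2=\langle x,x\rangle\langle y,y\rangle$ (equivalently $e=p(x)p(y)$), and then runs the chain
\[
p(x)y=\langle x,x\rangle y=x[x,y]=x[y,x]=\langle x,y\rangle x=\langle x,y\rangle^2 x=p(x)p(y)x=p(y)x,
\]
treating left and right compatibility separately by symmetry. You instead observe that, via (BA7) and the identities $ex=xf$, $ey=yf$, both compatibility conditions collapse to the \emph{same} equation $xf=yf$ (equivalently $ex=ey$), and you close that single equation with the chain $xf=ex=e\,p(y)x=e(yf)=(ey)f=(yf)f=yf$, using $e\le p(y)$ and $f^2=f$. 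The paper's route packages the ``$e\le p(x),\,e\le p(y)$'' information into the single identity $e=p(x)p(y)$, which is elegant but needs its own verification; your route avoids that lemma and makes visible exactly where each of the two idempotency hypotheses is spent ($e\le p(y)$ from idempotency of $\langle x,y\rangle$, and $f^2=f$ at the last step). Either approach is fine; yours is slightly more self-contained.
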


\begin{proof}
All these points are easily derived from  \cite{Steinberg}. Let us just address (3) and (4). In order to prove (3) let us assume that both $\langle x,y \rangle$ and $[x,y]$ are idempotents.
From the properties, it is easy to prove that
$$\langle x, y \rangle \langle x, y \rangle = \langle x, x \rangle \langle y,y \rangle$$
and
$$[x, y][x, y] = [x, x][y,y].$$
Then
$$p(x)y = \langle x, x \rangle y = x[x,y] = x[y,x]$$
since $[x,y]$ is an idempotent.
But
$$x[y,x] = \langle x, y \rangle x = \langle x, x \rangle \langle y, y \rangle x = \langle y, y \rangle x=p(y)x.$$
Thus $p(x)y = p(y)x$, so we have proved that $x \sim_{l} y$.
By symmetry, $x \sim_{r} y$, thus $x \sim y$. This proves (3), and (4) is a consequence of part (4) of Proposition~\ref{lem:biaction-action}.
\end{proof}

\begin{remark}
Steinberg's theory \cite{Steinberg} is developed for inverse semigroups.
In this paper, we are primarily interested in inverse semigroups with zero.
His theory can be extended to this case by using pointed actions.
\end{remark}

\subsection{Pseudogroup bimodules}
Let $S$ and $T$ be pseudogroups.
By an {\em $(S,T)$-bimodule} we will mean an $(S,T)$-biaction $X$ such that the following axioms hold:
\begin{description}
\item[{\rm (Completeness)}] Every \emph{compatible} subset of $X$ (that is, a subset which is both left and right compatible) has a join.
\item[{\rm (Distributivity I)}] Both actions distribute over compatible joins in each variable.
That is, for all $x\in X$, $s\in S$, and $t\in T$, and all compatible subsets $\{x_i \st i\in I\}$ of $X$, $\{s_i \st i\in I\}$ of $S$, and $\{t_i\st i\in I\}$ of $T$, we have
that the following conditions hold:
\begin{enumerate}
\item $s \left( \V_{i \in I} x_{i} \right) = \V_{i \in I} sx_{i}$.
\item $\left( \V_{i \in I} s_{i} \right)x = \V_{i \in I} s_{i}x$.
\item $\left( \V_{i \in I} x_{i} \right)t = \V_{i \in I} x_{i}t$.
\item $x\left( \V_{i \in I} t_{i} \right) = \V_{i \in I} xt_{i}$.
\end{enumerate}
\item[{\rm (Distributivity II)}] For any compatible subset $\{x_{i} \st i \in I\}$ of $X$ we have that
$$\bigl\langle \V_{i\in I} x_{i}, y \bigr\rangle = \V_{i \in I} \langle x_{i}, y \rangle 
\text{ and } 
\bigl[y, \V_{i\in I} x_{i}\bigr] = \V_{i \in I} [y, x_{i}].$$ 
In particular, $\langle 0,y \rangle = 0$ and $[y,0] = 0$.
\end{description}

\begin{remark}\label{rem:collision}
The reader should be aware that if $X$ is an $(S,T)$-bimodule it need be neither a
left $S$-module nor a right $T$-module. This is because we require that every compatible subset of $X$ has a join
but we do not require that every left (respectively, right) compatible subset has a join. This leads to an unfortunate collision of terminology, but it is nothing more than that.
\end{remark}

The two conditions in the righthand side of the equivalences in the following lemma 
mean that the supports of the elements of $X$ cover the pseudogroup units, so we refer to them as \emph{covering conditions} (cf.\ Lemma~\ref{surjectivia}).

\begin{lemma}[Covering conditions]
\label{lem:coveringconditions}
Let $S$ and $T$ be pseudogroups, with identities $e_{S}$ and $e_{T}$, respectively, and $X$ an $(S,T)$-bimodule.
The following hold:
\begin{enumerate}
\item $\left(\mbox{\rm im}\langle -,-\rangle\right)^{\V} = S$ if and only if $\V_{x\in X} p(x)=e_S$.
\item $\left(\mbox{\rm im}[-,-] \right)^{\V}=T$ if and only if $\V_{x\in X} q(x)=e_T$.
\end{enumerate}
\end{lemma}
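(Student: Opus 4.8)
The plan is to establish statement (1) directly and to deduce statement (2) from the left--right symmetry of the biaction axioms, in which the roles of $\langle-,-\rangle$, $p$, and $\mathbf{r}$ are played respectively by $[-,-]$, $q$, and $\mathbf{d}$. Everything rests on two elementary observations that I would record first. Combining (BA1) with (BA3) yields, for all $x,y\in X$,
\[
\langle x,y\rangle=\langle p(x)x,y\rangle=p(x)\langle x,y\rangle,
\]
which says precisely that $\mathbf{r}(\langle x,y\rangle)\le p(x)$; dually, (BA4) and (BA6) give $\mathbf{d}([x,y])\le q(y)$. Moreover, in any inverse semigroup an element $a$ with $a\le e_S$ is idempotent, since $a\le e_S$ forces $a=\mathbf{r}(a)e_S=\mathbf{r}(a)$.

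For the forward implication of (1), I would assume $\left(\mbox{\rm im}\langle-,-\rangle\right)^{\V}=S$, so that in particular $e_S$ is the join of a compatible family $\{\langle x_i,y_i\rangle\st i\in I\}$ of inner products. Each term satisfies $\langle x_i,y_i\rangle\le e_S$ and is therefore idempotent, so $\langle x_i,y_i\rangle=\mathbf{r}(\langle x_i,y_i\rangle)\le p(x_i)$ by the first observation. Since idempotents are pairwise compatible and joins are monotone, this gives
\[
e_S=\V_{i\in I}\langle x_i,y_i\rangle\le\V_{i\in I}p(x_i)\le\V_{x\in X}p(x)\le e_S,
\]
whence $\V_{x\in X}p(x)=e_S$.

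For the converse I would assume $\V_{x\in X}p(x)=e_S$ and take an arbitrary $s\in S$. Using that $\{p(x)\st x\in X\}$ is compatible (hence so is $\{sp(x)\st x\in X\}$), the distributivity of multiplication over compatible joins, and (BA1), I compute
\[
s=se_S=s\Bigl(\V_{x\in X}p(x)\Bigr)=\V_{x\in X}s\langle x,x\rangle=\V_{x\in X}\langle sx,x\rangle.
\]
Each $\langle sx,x\rangle$ belongs to $\mbox{\rm im}\langle-,-\rangle$, so $s$ is exhibited as a join of a compatible subset of the image, i.e.\ $s\in\left(\mbox{\rm im}\langle-,-\rangle\right)^{\V}$. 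As $s$ was arbitrary and the inclusion $\left(\mbox{\rm im}\langle-,-\rangle\right)^{\V}\subseteq S$ is automatic, we obtain equality.

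I do not anticipate a serious obstacle; the only point demanding care is the bookkeeping of compatibility, namely checking that each join written above is taken over a genuinely compatible family so that it exists and the distributivity laws apply. This is handled by the facts that all idempotents are pairwise compatible and that $a\sim b$ implies $sa\sim sb$. Statement (2) then follows by the same argument after interchanging the two inner products, replacing $\mathbf{r}(\langle x,y\rangle)\le p(x)$ by its dual $\mathbf{d}([x,y])\le q(y)$, and expanding $q(x)t=[x,x]t=[x,xt]$ in place of $s\langle x,x\rangle=\langle sx,x\rangle$.
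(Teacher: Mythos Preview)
Your proof is correct and follows essentially the same route as the paper. The paper proves only the backward implication via the identical computation $s=\V_{x\in X}sp(x)=\V_{x\in X}\langle sx,x\rangle$ and declares the forward implication not to need proving; you have simply made that direction explicit by bounding each $\langle x_i,y_i\rangle\le e_S$ above by $p(x_i)$, which is a clean way to fill the gap.
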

\begin{proof} We prove (1), the proof of (2) is similar.
Only one direction needs proving.
Suppose that $\V_{x\in X} p(x)=e_S$.
Let $s \in S$.
Then 
$$s = \V_{x\in X} sp(x) =  \V_{x\in X} s\langle x, x \rangle = \V_{x\in X} \langle sx, x \rangle,$$
as required.
\end{proof}

\begin{definition*}
Let $S$ and $T$ be pseudogroups.
An {\em $(S,T)$-equivalence bimodule} is an $(S,T)$-bimodule satisfying both the above covering conditions.
\end{definition*}

The following example shows that the notion of an equivalence bimodule arises naturally in the theory of transformation pseudogroups.

\begin{example}\label{ex:atlases}
Let $X$ and $Y$ be topological spaces.
Let $S$ be a pseudogroup of partial homeomorphisms on $X$, 
let $T$ be a pseudogroup of partial homeomorphisms on $Y$
and let $\mathcal{X}$ be a set of partial homeomorphisms from $Y$ to $X$
closed under compatible unions.
We now require the following conditions to hold:
\begin{enumerate}
\item $S \mathcal{X} = \mathcal{X}$.
\item $\mathcal{X}T = \mathcal{X}$.
\item $\mathcal{X}\mathcal{X}^{-1} \subseteq S$ and $\mathcal{X}^{-1}\mathcal{X} \subseteq T$.
\item The join of the images of the elements of $\mathcal{X}$ is $X$
and the join of the domains of the elements of $\mathcal{X}$ is $Y$.
\end{enumerate}
Define $\langle x, y \rangle = xy^{-1} \in S$ and define $[x,y] = x^{-1}y \in T$.
Then $\mathcal{X}$ is an equivalence $(S,T)$-bimodule.
However, $\mathcal{X}$ can also be viewed as being an {\em atlas} from $Y$ to $X$
in the usual sense of differential geometry.
You can find a discussion in \cite[\S 1.2]{Lawson1998}.
\end{example}

\begin{example}
Let $X$ be an equivalence $(S,T)$-bimodule.
Define a ternary operation $\{-,-,-\}$ on $X$ by $\{x,y,z\} = \langle x, y \rangle z$.
Then $(X,\{-,-,-\})$ is a generalized heap \cite{Lawson2011}.
It would be interesting to investigate just which generalized heaps arise in this way
in the light of Wagner's work \cite{HL}.
\end{example}

\subsection{Morita equivalence for inverse quantal frames revisited}\label{sec:morebackground}

A characterization of Morita equivalence of inverse quantal frames in terms of bimodules has been obtained in~\cite{QR}, based on a definition of sheaf for inverse quantal frames which is different from but equivalent to that of section~\ref{sec:shmoritaiqf}. The second definition of sheaf is formulated in terms of the Hilbert modules of~\cite{P} (a quantale version of Hilbert C*-modules), specifically the complete ones that were introduced in~\cite{RR2010,Resende2012}. Let us recall this.

\begin{definition*}
Let $Q$ be an involutive quantale. By a \emph{complete Hilbert left $Q$-module} is meant a left $Q$-module $X$ equipped with a map
\[
\langle -,-\rangle:X\times X\to Q,
\]
called the \emph{inner product},
that satisfies the following three conditions for all $x, y\in X$, all families $(x_i)_{i\in I}$ in $X$, and all $a\in Q$,
\begin{enumerate}
\item $\langle{ax},y\rangle=a\langle x, y\rangle$,
\item $\bigl\langle\V_{i\in I} x_i,y\bigr\rangle =\V_{i\in I}\langle{x_i},y\rangle$,
\item $\langle x, y\rangle=\langle y, x\rangle^*$,
\end{enumerate}
and moreover is such that for some subset $\sections\subset X$, called a \emph{Hilbert basis}, the following condition holds for all $x\in X$:
\begin{enumerate}
\setcounter{enumi}{3}
\item $x = \V_{\gamma\in\sections} \langle x,\gamma\rangle\gamma$.
\end{enumerate}
By a \emph{Hilbert section} is meant an element $\gamma\in X$ such that $\langle x,\gamma\rangle\gamma\le x$ for all $x\in X$, and a \emph{regular section} is a Hilbert section $\gamma$ such that $\langle\gamma,\gamma\rangle\gamma=\gamma$. The set of all the Hilbert sections is denoted by $\sections_X$.
\end{definition*}

The following facts are easy consequences of the definition:

\begin{proposition}\label{parseval}
Let $Q$ be an involutive quantale, and $X$ a complete left Hilbert $Q$-module.  
\begin{enumerate}
\item For all $x,y\in X$ and any Hilbert basis $\sections$ we have $\langle x,y\rangle = \V_{\gamma\in\sections} \langle x,\gamma\rangle\langle\gamma,y\rangle$ (Parseval's identity).
\item If $\langle x,z\rangle=\langle y,z\rangle$ for all $z\in X$ then $x=y$ (the inner product is non-degenerate).
\item Any element of a Hilbert basis is a Hilbert section, and $\sections_X$ is the greatest Hilbert basis.
\item Any join-dense set of regular sections is a Hilbert basis.
\end{enumerate}
\end{proposition}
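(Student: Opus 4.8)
The plan is to derive all four statements from the reproducing identity (axiom~(4)) together with the sup-lattice structure. As a preliminary I would record two monotonicity facts forced by the axioms: since $\langle-,-\rangle$ preserves joins in its first variable (axiom~(2)), it is monotone there; and since the action $(a,x)\mapsto ax$ preserves joins in each variable, it too is monotone. Both are used below.

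For Parseval's identity (part~(1)), I would expand the first argument using axiom~(4), writing $x=\V_{\gamma\in\sections}\langle x,\gamma\rangle\gamma$, and then push the inner product through the join and the scalar using axioms~(2) and~(1):
\[
\langle x,y\rangle=\bigl\langle\V_{\gamma\in\sections}\langle x,\gamma\rangle\gamma,\,y\bigr\rangle=\V_{\gamma\in\sections}\langle x,\gamma\rangle\langle\gamma,y\rangle.
\]
This is a one-line computation needing no manipulation of the involution. Non-degeneracy (part~(2)) is then immediate: if $\langle x,z\rangle=\langle y,z\rangle$ for all $z\in X$, then applying the reproducing formula to $x$ and to $y$ with $z$ ranging over a fixed basis gives $x=\V_{\gamma\in\sections}\langle x,\gamma\rangle\gamma=\V_{\gamma\in\sections}\langle y,\gamma\rangle\gamma=y$.

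For part~(3), the first claim is essentially free: in the reproducing formula $x=\V_{\gamma\in\sections}\langle x,\gamma\rangle\gamma$ each summand is bounded above by the join, so $\langle x,\gamma\rangle\gamma\le x$ for every $\gamma\in\sections$, which says each basis element is a Hilbert section; hence $\sections\subset\sections_X$ for every Hilbert basis $\sections$. To see that $\sections_X$ itself is a Hilbert basis, I would sandwich: fixing any basis $\sections$ (one exists by hypothesis), $x=\V_{\gamma\in\sections}\langle x,\gamma\rangle\gamma\le\V_{\gamma\in\sections_X}\langle x,\gamma\rangle\gamma$ because $\sections\subset\sections_X$, while the reverse inequality holds since each term $\langle x,\gamma\rangle\gamma\le x$ by the Hilbert-section property. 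That $\sections_X$ is the greatest basis then follows from $\sections\subset\sections_X$ for all bases.

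Part~(4) is where the regularity hypothesis does real work, and it is the step I expect to need the most care. Let $\sections$ be a join-dense set of regular sections. Since regular sections are in particular Hilbert sections, $\V_{\gamma\in\sections}\langle x,\gamma\rangle\gamma\le x$ for every $x$. For the reverse inequality I would use join-density to write $x=\V\{\gamma\in\sections\st\gamma\le x\}$ and then show each such $\gamma$ lies below the right-hand side: from $\gamma\le x$ and monotonicity of the inner product, $\langle\gamma,\gamma\rangle\le\langle x,\gamma\rangle$, whence by monotonicity of the action and regularity $\gamma=\langle\gamma,\gamma\rangle\gamma\le\langle x,\gamma\rangle\gamma\le\V_{\gamma'\in\sections}\langle x,\gamma'\rangle\gamma'$. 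Taking the join over all $\gamma\le x$ yields $x\le\V_{\gamma'\in\sections}\langle x,\gamma'\rangle\gamma'$, and the two inequalities identify $\sections$ as a Hilbert basis. The only subtlety to watch is that regularity (rather than mere Hilbert-section status) is precisely what licenses replacing $\gamma$ by $\langle\gamma,\gamma\rangle\gamma$ and thereby comparing it with the Parseval-type sum.
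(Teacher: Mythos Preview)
Your proof is correct. The paper does not supply a proof of this proposition, merely introducing it with ``The following facts are easy consequences of the definition''; your argument fills in exactly the kind of direct derivation from axioms~(1)--(4) that this remark points to, and each step is sound.
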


In~\cite{HS} it has been proved that, for any modular quantal frame $Q$, in particular any inverse quantal frame, each left $Q$-module admits at most one inner product that makes it a complete Hilbert $Q$-module. Hence, for modules on such well behaved quantales, having a complete Hilbert $Q$-module structure is a property rather than additional structure. For inverse quantal frames that property is the same as being a sheaf in the sense of section~\ref{sec:shmoritaiqf}:

\begin{theorem}[\cite{Resende2012}]
Let $Q$ be an inverse quantal frame, and $X$ a left $Q$-module. The following conditions are equivalent:
\begin{enumerate}
\item $X$ is a $Q$-sheaf.
\item $X$ is a complete Hilbert $Q$-module.
\end{enumerate}
Moreover, for any $Q$-sheaf $X$:
\begin{enumerate}
\setcounter{enumi}{2}
\item The local sections are the same as the Hilbert sections.
\item For all $x\in X$ the support and the inner product are related by the formula
\[
\spp(x) = \langle x,x\rangle\wedge e.
\]
\item Any downwards-closed set $\sections\subset X$ of regular sections such that $\V\sections=1$ is a Hilbert basis.
\end{enumerate}
\end{theorem}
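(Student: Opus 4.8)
The plan is to prove the equivalence (1)$\Leftrightarrow$(2) by setting up a dictionary between the support of a sheaf and the inner product of a Hilbert module, governed precisely by the formula in item~(4), and then to let the remaining assertions (3)--(5) fall out of this dictionary. Since \cite{HS} guarantees that a left $Q$-module carries at most one complete Hilbert structure, and since the support of a $Q$-sheaf is likewise unique, it suffices to construct each structure from the other and to verify that local sections and Hilbert sections coincide; uniqueness then forces the two passages to be mutually inverse.

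For (1)$\Rightarrow$(2), starting from a $Q$-sheaf $X$ with support $\spp$, I would define the inner product first on local sections and then extend by join-density. For $\gamma,\delta\in\sections_X$ I put
\[
\langle\gamma,\delta\rangle=\V\{s\in Q_\ipi\st s\delta=ss^*\gamma,\ s^*s\le\spp(\delta),\ ss^*\le\spp(\gamma)\},
\]
the join of the partial units identifying the overlap of $\delta$ with $\gamma$; since every element of $X$ is the join of the local sections beneath it (by the cover condition together with downward-closure of $\sections_X$ and frame distributivity), one extends by $\langle x,y\rangle=\V\{\langle\gamma,\delta\rangle\st\gamma\le x,\ \delta\le y,\ \gamma,\delta\in\sections_X\}$. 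The symmetry axiom $\langle x,y\rangle=\langle y,x\rangle^*$ is built into the defining condition, because multiplying $s\delta=ss^*\gamma$ on the left by $s^*$ yields $s^*\gamma=s^*s\delta$, which is exactly the condition placing $s^*$ in $\langle\delta,\gamma\rangle$. The $Q$-linearity and join-preservation axioms are then reduced to partial units via the identity $a=\V\{s\in Q_\ipi\st s\le a\}$ valid in any inverse quantal frame, and discharged using the conjugation formulas \eqref{conjugation}--\eqref{conjugation2}. Taking $\sections=\sections_X$ as Hilbert basis, the reconstruction $x=\V_{\gamma\in\sections_X}\langle x,\gamma\rangle\gamma$ follows because $\spp(\gamma)\le\langle x,\gamma\rangle$ whenever $\gamma\le x$, so the right-hand side dominates $\V\{\gamma\in\sections_X\st\gamma\le x\}=x$, while each summand lies below $x$. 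The same computation shows every local section is a regular Hilbert section, giving one inclusion of~(3); specializing to $\gamma=\delta=x$ and intersecting with $e$ yields~(4); and~(5) is then immediate from Proposition~\ref{parseval}(4).

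For (2)$\Rightarrow$(1), starting from a complete Hilbert $Q$-module $X$ with basis $\sections$, I would take formula~(4) as the definition of the support, $\spp(x)=\langle x,x\rangle\wedge e$, and verify the three sheaf axioms. The $Q_0$-equivariance $\spp(bx)=b\spp(x)$ follows by a short computation from axioms~(1) and~(3) of the inner product, using $b^*=b$ and $b\le e$ for $b\in Q_0$; the support condition $\spp(x)x=x$ and the cover condition $\V\sections_X=1_X$ are obtained by expanding $x$ in the basis through Parseval's identity (Proposition~\ref{parseval}(1)) and using that its members are regular sections. The one genuinely structural point is that the underlying sup-lattice of $X$ must be shown to be a locale: here I would use the basis to present $X$ as a frame, reading off the meet of two basis sections from their inner product, and then invoke the uniqueness in \cite{HS} to guarantee compatibility of the resulting locale and module structures. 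Finally, the identification of Hilbert sections with local sections (the remaining half of~(3)) follows by comparing the defining inequalities once the support is in place.

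I expect the main obstacle to lie in two closely related places: the join-preservation of the inner product in (1)$\Rightarrow$(2), and the frame distributivity of $X$ in (2)$\Rightarrow$(1). Both are resolved by the same device --- expanding an arbitrary element as a join of (local, equivalently Hilbert) sections and using that a partial unit carries a section to a section --- so the crux of the argument is to establish carefully that $\sections_X$ is a downward-closed, join-dense family of sections closed under the action of $Q_\ipi$, after which the bilinearity, the support identity~(4), and the coincidence of the two notions of section in~(3) reduce to bookkeeping.
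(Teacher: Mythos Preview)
The paper does not prove this theorem at all: it is stated with the citation \cite{Resende2012} and no proof is given, since it is being recalled as background from that reference. So there is nothing in the present paper to compare your proposal against.

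As to the proposal itself, the broad strategy of building a dictionary via formula~(4) is the right idea, but two steps in your sketch are underdetermined. First, in $(2)\Rightarrow(1)$ you need $X$ to be a locale, and ``using the basis to present $X$ as a frame, reading off the meet of two basis sections from their inner product'' is not yet an argument: a complete Hilbert $Q$-module is only a sup-lattice a priori, and establishing frame distributivity from the Hilbert structure requires real work (in \cite{Resende2012} this is one of the main technical results). You should expect to construct binary meets explicitly from the inner product and then verify the infinite distributive law, rather than invoke a presentation. Second, in $(1)\Rightarrow(2)$ your proposed formula for $\langle\gamma,\delta\rangle$ on local sections is plausible but you have not shown it is $Q$-linear in the first variable for arbitrary $a\in Q$ (not just partial units); the reduction ``via $a=\V\{s\in Q_\ipi\st s\le a\}$'' needs the inner product to preserve that join, which is precisely the axiom you are trying to establish, so there is a circularity to break. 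Both issues are surmountable, but they are the substantive content of the theorem rather than bookkeeping.
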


\begin{remark}
Complete Hilbert modules provide a robust theory of sheaves whose applicability extends beyond quantales of \'etale groupoids. In particular, any Grothendieck topos is, up to equivalence, the category of sheaves on a Grothendieck quantale~\cite{Heymans}, which is a type of modular unital quantal frame of which inverse quantal frames are examples. Alternatively, staying closer to the representation of toposes by groupoids of~\cite{JT}, one may go beyond \'etale groupoids but restricting to open groupoids $G$, such as Lie groupoids, that are equipped with good pseudogroups of local bisections such that the (non-unital) quantale $\opens(G)$ has an appropriate embedding into an inverse quantal frame $Q$~\cite{Protin,QR2}. Then the classifying topos $\mathsf{B}G$ is equivalent to the full subcategory of $Q$-$\Sh$ whose objects have $\opens(G)$-valued inner products~\cite{QR2}.
\end{remark}

Let us further recall some facts about principal bundles from~\cite{QR}.

\begin{definition*}
Let $Q$ be an inverse quantal frame, and $X$ a $Q$-sheaf. A \emph{principal section} of $X$ is a local section $\gamma$ such that $\langle\gamma,\gamma\rangle\in Q_0$. The set of all the principal sections is denoted by $\psections_X$, and a \emph{principal basis} is a Hilbert basis $\sections\subset\psections_X$.
\end{definition*}

Principal bases enable us to restrict to pseudogroup-valued inner products:

\begin{proposition}[\cite{QR}]\label{prop:princbasis}
Let $Q$ be an inverse quantal frame, and $\sections$ a Hilbert basis of a $Q$-sheaf. The following conditions are equivalent:
\begin{enumerate}
\item $\sections$ is a principal basis.
\item $\langle\gamma,\gamma'\rangle\in Q_\ipi$ for all $\gamma,\gamma'\in\sections$.
\end{enumerate}
\end{proposition}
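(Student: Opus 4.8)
The plan is to prove the equivalence of the two conditions in Proposition~\ref{prop:princbasis} by establishing the nontrivial implication (1)$\Rightarrow$(2); the reverse implication is essentially immediate, since if $\langle\gamma,\gamma'\rangle\in Q_\ipi$ for all pairs, then in particular $\langle\gamma,\gamma\rangle\in Q_\ipi$, and combined with the fact that $\langle\gamma,\gamma\rangle=\langle\gamma,\gamma\rangle^*$ is self-adjoint, one checks that this forces $\langle\gamma,\gamma\rangle\in Q_0$ so that every $\gamma\in\sections$ is a principal section, making $\sections$ a principal basis. So the real content is showing that if every element of the basis has inner product $\langle\gamma,\gamma\rangle\le e$, then all the cross inner products $\langle\gamma,\gamma'\rangle$ are partial units.

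First I would fix $\gamma,\gamma'\in\sections$ and set $a=\langle\gamma,\gamma'\rangle$, with the goal of verifying the two defining inequalities of a partial unit, namely $aa^*\le e$ and $a^*a\le e$. The natural tool is Parseval's identity (Proposition~\ref{parseval}(1)): for any Hilbert basis $\sections$ and any $x,y\in X$ we have $\langle x,y\rangle=\V_{\delta\in\sections}\langle x,\delta\rangle\langle\delta,y\rangle$. Applying this to compute $aa^*=\langle\gamma,\gamma'\rangle\langle\gamma',\gamma\rangle$, I would use condition (3) of the Hilbert module definition to write $\langle\gamma',\gamma\rangle=\langle\gamma,\gamma'\rangle^*$ and then expand using Parseval with $x=y=\gamma$, obtaining a join of terms of the form $\langle\gamma,\delta\rangle\langle\delta,\gamma\rangle$. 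The key observation is that each such term is bounded by $e$: since $\gamma$ is a principal section, $\langle\gamma,\gamma\rangle\le e$, and the self-product structure forces the individual Parseval summands to stay below the diagonal inner product.

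The crux of the argument is therefore the inequality $\langle\gamma,\delta\rangle\langle\delta,\gamma\rangle\le\langle\gamma,\gamma\rangle$, which should follow from the Hilbert-module axioms together with the completeness/positivity encoded in the definition. I expect this to be the main obstacle: one must leverage the interplay between the inner product axioms (1)--(3) and the reconstruction formula (4), perhaps by first establishing a Cauchy--Schwarz-type bound in the quantale setting, where the order plays the role usually served by positivity in the $C^*$-algebraic case. Once $aa^*\le\langle\gamma,\gamma\rangle\le e$ is secured, the symmetric computation gives $a^*a\le\langle\gamma',\gamma'\rangle\le e$, using that $\gamma'$ is also principal. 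Together these two bounds are exactly the conditions $aa^*\le e$ and $a^*a\le e$ defining membership in $Q_\ipi$, completing the implication.

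Throughout, I would rely on the fact that in an inverse quantal frame multiplication distributes over the joins produced by Parseval's identity, so that the bound established termwise propagates to the full join; this is where the frame structure of $Q$ and the stably-Gelfand condition are implicitly doing their work. The whole proof is short once the termwise Cauchy--Schwarz inequality is in place, so I would lead with that lemma-style estimate and then assemble the two sides of the partial-unit condition as a brief corollary.
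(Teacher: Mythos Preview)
The paper does not actually prove this proposition; it is quoted from~\cite{QR} without proof, so there is nothing in the paper to compare your argument against. That said, your outline is essentially correct, but you are manufacturing difficulty where there is none.

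For (1)$\Rightarrow$(2), the inequality you call the ``crux'' and the ``main obstacle,''
\[
\langle\gamma,\gamma'\rangle\langle\gamma',\gamma\rangle\le\langle\gamma,\gamma\rangle,
\]
is not a Cauchy--Schwarz estimate requiring a separate lemma: it is an immediate consequence of Parseval's identity itself, since the left-hand side is one of the joinands in the expansion $\langle\gamma,\gamma\rangle=\V_{\delta\in\sections}\langle\gamma,\delta\rangle\langle\delta,\gamma\rangle$. Alternatively, and even more directly, it follows from the defining property of a Hilbert section applied to $\gamma'$: from $\langle\gamma,\gamma'\rangle\gamma'\le\gamma$ and monotonicity of $\langle-,\gamma\rangle$ (which is join-preserving, hence monotone) one gets $\langle\gamma,\gamma'\rangle\langle\gamma',\gamma\rangle\le\langle\gamma,\gamma\rangle$ in one line. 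Either way, no stably-Gelfand machinery or positivity argument is needed; once you see this, $aa^*\le\langle\gamma,\gamma\rangle\le e$ and $a^*a\le\langle\gamma',\gamma'\rangle\le e$ are immediate and you are done.

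For (2)$\Rightarrow$(1), your ``one checks'' hides a small step. Knowing only that $b:=\langle\gamma,\gamma\rangle$ lies in $Q_\ipi$ and satisfies $b=b^*$ does \emph{not} by itself force $b\le e$ (a self-inverse element of an inverse monoid need not be an idempotent). You also need $b^2\le b$, which again comes straight from Parseval (or the Hilbert-section property) as above; then $b=b b^* b=b^3\le b^2\le b$ gives $b=b^2$, hence $b\in Q_0$.
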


In addition, the importance of principal sections is that they cater for simple module-theoretic characterizations of sheaves that are also principal bundles:

\begin{theorem}[\cite{QR}]
Let $G$ be an \'etale groupoid, and $X$ a $G$-sheaf (equivalently, an $\opens(G)$-sheaf). The following conditions are equivalent:
\begin{enumerate}
\item $X$ is a principal $G$-bundle (over the ``locale of orbits'' $X/G$).
\item $X$ has a principal basis.
\item $\psections_X$ is a principal basis.
\item $\V\psections_X=1$ (i.e., $X$ is covered by principal sections).
\end{enumerate}
\end{theorem}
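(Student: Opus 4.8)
The plan is to prove the equivalence of the four conditions by establishing a cycle of implications, exploiting the machinery already developed in this section and the cited results from~\cite{QR}. The implication (3)$\Rightarrow$(2) is immediate since $\psections_X$ is a particular subset of the local sections, and (2)$\Rightarrow$(4) follows because any Hilbert basis $\sections$ satisfies $\V\sections=1$ by the cover condition, so if $\sections\subset\psections_X$ then a fortiori $\V\psections_X=1$. The substantive work lies in the implications (1)$\Leftrightarrow$(2) connecting principality as a bundle to the existence of a principal basis, and (4)$\Rightarrow$(3) showing that mere covering by principal sections already forces $\psections_X$ itself to be a principal basis.

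For (4)$\Rightarrow$(3), I would argue as follows. Suppose $\V\psections_X=1$. The set $\psections_X$ is downwards closed: if $\gamma\in\psections_X$ and $\delta\le\gamma$, then $\delta$ is a local section (local sections are downwards closed), and using~\eqref{localsectionprop} together with the fact that $\langle-,-\rangle$ behaves multiplicatively under the action, one checks $\langle\delta,\delta\rangle=\spp(\delta)\langle\gamma,\gamma\rangle\spp(\delta)\le e$, so $\delta\in\psections_X$. By Proposition~\ref{prop:princbasis}, to conclude that $\psections_X$ is a principal basis it suffices to verify it is a Hilbert basis, i.e. that $x=\V_{\gamma\in\psections_X}\langle x,\gamma\rangle\gamma$ for all $x$. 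Here I would invoke part~(5) of the theorem relating $Q$-sheaves and complete Hilbert modules (\cite{Resende2012}): a downwards-closed set of regular sections with join $1$ is automatically a Hilbert basis. Since principal sections are in particular regular (a principal section $\gamma$ satisfies $\langle\gamma,\gamma\rangle\in Q_0$, whence $\langle\gamma,\gamma\rangle\gamma=\spp(\gamma)\gamma=\gamma$), and $\psections_X$ is downwards closed with $\V\psections_X=1$ by hypothesis, the cited result applies directly and yields (3).

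For the equivalence (1)$\Leftrightarrow$(2), I would appeal to the characterization of principal $G$-bundles from~\cite{QR}: a $G$-sheaf $X$ descends to a principal bundle over the locale of orbits precisely when the anchor map $X\to X\times_{X/G} X$ given by the inner product is suitably surjective, and in the quantale-theoretic formulation this is encoded exactly by the existence of a Hilbert basis consisting of principal sections. I expect the main obstacle to be making the bundle-theoretic condition in (1) precise enough to match the module-theoretic condition in (2) without re-deriving the full correspondence; the cleanest route is to quote the relevant result of~\cite{QR} that identifies principality of the bundle with the property that the pseudogroup-valued inner products $\langle\gamma,\gamma'\rangle$ land in $Q_\ipi$ on a generating family of sections, which by Proposition~\ref{prop:princbasis} is the same as having a principal basis. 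Thus the genuinely new content of the proof is the self-contained argument for (4)$\Rightarrow$(3), where the downward closure of $\psections_X$ and the regularity of principal sections combine with the Hilbert-basis criterion to close the cycle.
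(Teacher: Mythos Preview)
The paper does not prove this theorem: it is stated as a result cited from~\cite{QR}, with no proof given. So there is no ``paper's own proof'' to compare against.

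That said, your sketch is largely sound where it is self-contained. The easy implications (3)$\Rightarrow$(2)$\Rightarrow$(4) are correct, and your argument for (4)$\Rightarrow$(3) is essentially complete: downward closure of $\psections_X$ is verified correctly, principal sections are indeed regular because $\langle\gamma,\gamma\rangle\le e$ implies $\langle\gamma,\gamma\rangle=\spp(\gamma)$, and then part~(5) of the cited theorem from~\cite{Resende2012} applies. One small gap: you should check that a Hilbert basis $\sections$ actually satisfies $\V\sections=1$ for (2)$\Rightarrow$(4); this follows from $x=\V_{\gamma\in\sections}\langle x,\gamma\rangle\gamma$ applied to $x=1$, but is worth stating.

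The real issue is (1)$\Leftrightarrow$(2). You write that you would ``appeal to the characterization of principal $G$-bundles from~\cite{QR}'' and ``quote the relevant result of~\cite{QR}''---but the theorem you are proving \emph{is} that relevant result from~\cite{QR}. This is circular: you cannot cite the theorem to prove itself. A genuine proof of (1)$\Leftrightarrow$(2) requires unpacking what it means for the $G$-action on $X$ to be principal (freeness and the orbit-locale condition) and relating this to the inner-product condition $\langle\gamma,\gamma\rangle\le e$ on a basis. That is the substantive content of the result in~\cite{QR}, and your proposal does not supply it.
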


Based on this, Morita equivalence for inverse quantal frames can be characterized, much in the same way as for noncommutative rings, in terms of quantale bimodules, which are the unital bi-action objects in the category of sup-lattices.
The appropriate bimodules are those that define principal sheaves with respect to both actions --- and thus correspond to Hilsum--Skandalis maps that are invertible 1-morphisms in the bicategory of localic \'etale groupoids (\cf\ Remark~\ref{rem:HSmaps}). A summary of these requirements can be given as follows~(\cf\ \cite[\S5.4--5.5]{QR}):

\begin{definition*}
Let $Q$ and $R$ be inverse quantal frames. By a \emph{$Q$-$R$-bisheaf} will be meant a (unital) $Q$-$R$-bimodule $X$ whose left and right actions make it both a left $Q$-sheaf and a right $R$-sheaf. 
A \emph{local bisection} of $X$ is an element $\gamma\in X$ which is a local section with respect to both sheaf structures, and we denote the set of all the local bisections by $\bisections_X$. 
Equivalently, a $Q$-$R$-bimodule $X$ is a $Q$-$R$-bisheaf if it satisfies the following conditions:
\begin{description}
\item[Inner products] $X$ is equipped with two inner products
\[
\langle-,-\rangle: X\times X\to Q\quad\quad\text{and}\quad\quad [-,-]:X\times X\to R
\]
(with $\langle-,-\rangle$ being left $Q$-linear and $[-,-]$ being right $R$-linear).
\item[Bisections] There is $\sections\subset X$ such that for all $x\in X$
\[
\V_{\gamma\in\sections} \langle x,\gamma\rangle \gamma = \V_{\gamma\in\sections} \gamma[\gamma,x]=x\;.
\]
\end{description}
(Then $\bisections_X$ is the largest such set $\sections$.)
We shall usually denote the corresponding support operators by
\[
\spp :X\to Q_0\quad\quad\text{and}\quad\quad\tspp :X\to R_0\;,
\]
so for all $x\in X$ we have
\[
\spp(x) = \langle x,x\rangle\wedge e_Q\quad\quad\text{and}\quad\quad\tspp(x)=[x,x]\wedge e_R\;.
\]
Finally, a $Q$-$R$-bisheaf is \emph{biprincipal} if the following additional conditions are satisfied:
\begin{description}
\item[Covering conditions]
\[
\V_{\gamma\in\bisections_X} \langle \gamma,\gamma\rangle = e_Q\quad \textrm{and}\quad \V_{\gamma\in\bisections_X}[\gamma,\gamma]= e_R.
\]
\item[Inner product associativity] For all $\gamma,\gamma',\gamma''\in\bisections_X$ we have
\[
\langle \gamma,\gamma'\rangle \gamma''= \gamma[\gamma',\gamma''].
\]
\end{description}
\end{definition*}

\begin{theorem}[{\cite[\S5.5--5.6]{QR}}]\label{thm:QRbiprincipal}
Two inverse quantal frames $Q$ and $R$ are Morita equivalent if and only if there is a biprincipal $Q$-$R$-bisheaf.
\end{theorem}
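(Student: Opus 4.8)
The plan is to reproduce, in the sup-lattice enriched setting, the Eilenberg--Watts/Rieffel pattern of Morita theory: a biprincipal bisheaf should be precisely the data needed to manufacture a pair of mutually inverse equivalences by tensoring. Given a biprincipal $Q$-$R$-bisheaf $X$, regarded simultaneously as a left $Q$-sheaf and a right $R$-sheaf, I would build the interior tensor product functor
\[
X\otimes_R(-)\colon R\textrm{-}\Sh\to Q\textrm{-}\Sh ,
\]
where $X\otimes_R M$ is the tensor product in $\SL$ that coequalizes the right $R$-action on $X$ against the left $R$-action on $M$, equipped with the left $Q$-action inherited from $X$ and with the composite inner product $\langle x\otimes m,x'\otimes m'\rangle=\langle x\,[m,m'],x'\rangle$, where $[m,m']\in R$ is the $R$-valued inner product of $M$ acting on $X$ on the right. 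Writing $\bar X$ for the conjugate $R$-$Q$-bisheaf obtained by interchanging the two inner products through the involution, I would symmetrically form $\bar X\otimes_Q(-)\colon Q\textrm{-}\Sh\to R\textrm{-}\Sh$. The whole theorem then reduces to the two bimodule isomorphisms
\[
\bar X\otimes_Q X\cong R\quad\text{and}\quad X\otimes_R\bar X\cong Q ,
\]
which furnish the unit and counit of the adjunction witnessing that these functors are inverse equivalences.

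For the forward implication there are two steps. First, I would verify that $X\otimes_R M$ is again a sheaf in the sense of section~\ref{sec:shmoritaiqf}, by exhibiting a Hilbert basis built from $\{\gamma\otimes\delta\st\gamma\in\bisections_X,\ \delta\in\sections_M\}$ and checking the support and cover conditions, using that $\bisections_X$ covers $X$ and $\sections_M$ covers $M$; Proposition~\ref{prop:princbasis} ensures the relevant inner products land in $Q_\ipi$, so the composite inner product is well behaved. Second, and this is where the biprincipality hypotheses are consumed, I would establish the two displayed isomorphisms: the covering condition $\V_{\gamma\in\bisections_X}\langle\gamma,\gamma\rangle=e_Q$ gives surjectivity of the canonical map $X\otimes_R\bar X\to Q$ onto the unit, while the inner product associativity $\langle\gamma,\gamma'\rangle\gamma''=\gamma[\gamma',\gamma'']$ is exactly what makes that map well defined on the balanced tensor product and injective. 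I expect this second step to be the main obstacle, since it is the point at which the sup-lattice tensor product must be shown to compute correctly against the sheaf structure rather than merely against the underlying module structure.

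For the converse I would begin from an equivalence $F\colon R\textrm{-}\Sh\to Q\textrm{-}\Sh$ and reconstruct a biprincipal bisheaf by applying $F$ to a canonical generator. Viewing $R$ itself as the regular left $R$-sheaf (its local sections are the $\gamma$ with $\gamma^*\gamma\le e$, which cover $1$ since $1=\V R_\ipi$), set $X=F(R)$; this is a left $Q$-sheaf, and the right $R$-action on $R$, commuting with the left action, is transported by $F$ — which is $\SL$-enriched and colimit-preserving — to a right $R$-action making $X$ a $Q$-$R$-bisheaf. Because $F$ is an equivalence, $X$ is a generator whose endomorphisms recover $R$, and this forces the covering conditions and inner product associativity, so $X$ is biprincipal. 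Alternatively, as a consistency check, this direction can be obtained entirely at the level of groupoids: by Corollary~\ref{cor:morita} the hypothesis is equivalent to Morita equivalence of the étale groupoids $\mathsf{G}(Q)$ and $\mathsf{G}(R)$, which by the theory recalled in Remark~\ref{rem:HSmaps} is witnessed by an invertible Hilsum--Skandalis map, i.e.\ a principal bibundle, and translating this bibundle through the sheaf--module dictionary of \cite{Resende2012,QR} yields a biprincipal bisheaf directly. In either route the remaining work is the routine but delicate matching of the abstract categorical data against the concrete inner product and covering axioms of a biprincipal bisheaf.
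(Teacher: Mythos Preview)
The paper does not prove this theorem at all: it is imported wholesale from \cite[\S5.5--5.6]{QR} as a background result, with no argument given. So there is nothing in the paper to compare your proposal against. In \cite{QR} the proof proceeds via the bicategory of localic \'etale groupoids and Hilsum--Skandalis maps, showing that biprincipal bisheaves correspond precisely to invertible $1$-morphisms there; this is the route you allude to in your ``alternative'' for the converse, and it is in fact the primary method in the source.

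Your Eilenberg--Watts/Rieffel sketch is a reasonable independent approach and is broadly sound. A couple of points deserve care. For the forward direction, you need $X\otimes_R M$ to be a $Q$-\emph{sheaf}, not merely a left $Q$-module with an inner product; by the theorem of \cite{Resende2012} quoted in the paper these coincide for inverse quantal frames once you have a genuine Hilbert basis, so your strategy of exhibiting $\{\gamma\otimes\delta\}$ as such a basis is the right one, but that equivalence is doing nontrivial work. For the converse via $X=F(R)$, the assertion that the right $R$-action transports along $F$ requires $F$ to preserve the relevant sup-lattice structure (tensoring with $R$ on the right is a colimit construction), which is not automatic from a bare equivalence of categories; you would need that equivalences between these toposes are automatically $\SL$-enriched, or argue via geometric morphisms. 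The final clause ``this forces the covering conditions and inner product associativity'' is where the substance lies and is not routine --- this is exactly why \cite{QR} goes through the groupoid picture instead.
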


\subsection{The main theorem}

Based on the characterization of Morita equivalence for inverse quantal frames via biprincipal bisheaves, in this section we prove Theorem~\ref{theoremB} of the Introduction.

\begin{lemma}\label{lem:biprincipal1}
Let $Q$ and $R$ be inverse quantal frames and $\Xi$ a biprincipal $Q$-$R$-bisheaf. Then $\bisections_\Xi$ is a $(Q_\ipi,R_\ipi)$-equivalence bimodule.
\end{lemma}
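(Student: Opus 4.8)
The plan is to verify directly that $\bisections_\Xi$, equipped with the restrictions of the two actions and of the two inner products, satisfies all the axioms in the definition of an $(S,T)$-equivalence bimodule, where $S=Q_\ipi$ and $T=R_\ipi$. The overall strategy mirrors the proof of Lemma~\ref{lem:Theta}: there we showed that the local sections of a single $Q$-sheaf form a $Q_\ipi$-module, and here we must run the argument twice, once for each side, and then check that the two structures interact correctly via the inner-product associativity axiom (BA7) and the covering conditions.

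First I would establish that $\bisections_\Xi$ is closed under the relevant operations and carries a well-defined biaction. The fact that $s\gamma\in\bisections_\Xi$ for $s\in Q_\ipi$ and $\gamma\in\bisections_\Xi$ should follow from the computation in Lemma~\ref{lem:Theta} applied to the left $Q$-sheaf structure, since left multiplication by a partial unit sends local sections to local sections; symmetrically $\gamma t\in\bisections_\Xi$ for $t\in R_\ipi$ by the right-handed version. The biaction compatibility $(s\gamma)t=s(\gamma t)$ is inherited from the ambient $Q$-$R$-bimodule. Next I would define the inner products on $\bisections_\Xi$ as the restrictions $\langle-,-\rangle$ and $[-,-]$; by Proposition~\ref{prop:princbasis} and the biprincipality hypothesis these land in $Q_\ipi$ and $R_\ipi$ respectively when evaluated on local bisections, which is exactly what is needed for them to serve as $S$- and $T$-valued inner products. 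The biaction axioms (BA1)--(BA7) then transcribe almost verbatim: (BA1), (BA2), (BA4), (BA5) are immediate from left/right linearity and the involutive symmetry of the inner products; (BA3) and (BA6) follow because local bisections are in particular local sections of each sheaf, so $\spp(\gamma)\gamma=\gamma$ and dually; and (BA7) is precisely the inner product associativity clause of the biprincipal bisheaf definition.

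Then I would turn to the completeness and distributivity conditions. For Completeness I need that every compatible subset of $\bisections_\Xi$ has a join lying in $\bisections_\Xi$; the argument from Lemma~\ref{lem:Theta} showing that left compatible joins of local sections are again local sections can be adapted, applied simultaneously to both sheaf structures, so that a set that is both left and right compatible has a join that is a local bisection. Distributivity~I in the four variables follows from the $\SL$-bilinearity of the two actions on the ambient module, and Distributivity~II from the join-preservation of the inner products (axiom (2) in the complete Hilbert module definition and its right-handed analogue). Finally, the two covering conditions of an equivalence bimodule are exactly the covering conditions in the definition of biprincipal, translated through Lemma~\ref{lem:coveringconditions}: the identities $\V_{\gamma\in\bisections_\Xi}\langle\gamma,\gamma\rangle=e_Q$ and $\V_{\gamma\in\bisections_\Xi}[\gamma,\gamma]=e_R$ give $\V_{\gamma} p(\gamma)=e_S$ and $\V_{\gamma} q(\gamma)=e_T$ after intersecting with $e$, using the formula $\spp(x)=\langle x,x\rangle\wedge e$.

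The step I expect to be the main obstacle is verifying Completeness, specifically that the join of a compatible family of local bisections is again a local bisection rather than merely an element of the ambient sheaf. The subtlety, flagged in Remark~\ref{rem:collision}, is that $\bisections_\Xi$ is required to be closed only under \emph{compatible} (two-sided) joins, not under the one-sided left or right compatible joins under which the full sheaf of sections is closed, so I must be careful that ``compatible in $\bisections_\Xi$'' (as a biaction, meaning $\sim_l$ for the $S$-structure and $\sim_r$ for the $T$-structure) matches the condition needed to guarantee that the join is simultaneously a left section and a right section. I anticipate that the cleanest route is to show that two-sided compatibility of bisections forces the join to satisfy the local-section inequality on both sides via the same meet-distributivity computation used in Lemma~\ref{lem:Theta}, invoking Lemma~\ref{lem:zeppo}-style distributivity of binary meets over joins in the underlying locale.
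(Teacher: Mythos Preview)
Your approach is essentially the same as the paper's, and your verification outline is correct. Two small remarks. First, your derivation of (BA3) from $\spp(\gamma)\gamma=\gamma$ silently uses $\langle\gamma,\gamma\rangle=\spp(\gamma)$, which requires $\langle\gamma,\gamma\rangle\le e_Q$; the paper makes this step explicit by noting that the covering condition $\V_{\gamma}\langle\gamma,\gamma\rangle=e_Q$ forces each $\langle\gamma,\gamma\rangle\in Q_0$, whence $\spp(\gamma)=\langle\gamma,\gamma\rangle\wedge e=\langle\gamma,\gamma\rangle$. Second, your anticipated ``main obstacle'' is in fact the easiest step: once Lemma~\ref{lem:Theta} is available on both sides, a compatible set of bisections is in particular left compatible (so its join is a left local section) and right compatible (so its join is a right local section), hence the join is a bisection---the paper dispatches this in one sentence. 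If anything, the point that deserves a moment's care, and which both you and the paper pass over, is the closure of $\bisections_\Xi$ under the actions: Lemma~\ref{lem:Theta} gives only that $s\gamma$ is a \emph{left} local section, and one must still check it remains a \emph{right} local section (this follows from $\tspp(s\,\cdot\,)\le\tspp(\,\cdot\,)$ together with $ss^*(x\wedge s\gamma)=x\wedge s\gamma$, mimicking the computation in Lemma~\ref{lem:Theta} on the right side).
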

\begin{proof}
The covering conditions in the definition of biprincipal bisheaf imply that that the bisections are principal for both sheaf structures, so by Proposition~\ref{prop:princbasis} the inner products restricted to $\bisections_\Xi$ are valued in $Q_\ipi$ and $R_\ipi$, respectively:
\[
\langle-,-\rangle\colon \bisections_\Xi\times \bisections_\Xi\to Q_\ipi\quad\quad\text{and}\quad\quad[-,-]\colon \bisections_\Xi\times \bisections_\Xi\to R_\ipi\;.
\]
Then the covering conditions of the definition of pseudogroup bimodule follow from those of the definition of a biprincipal $Q$-$R$-bisheaf. Proving the remaining axioms for the inner products of a $(Q_\ipi,R_\ipi)$-biaction is straightforward, only the axiom $\langle \gamma,\gamma\rangle\gamma=\gamma$ requires some justification: the support $\spp\colon \Xi\to Q_0$ is defined by $\spp(x)=\langle x,x\rangle\wedge e_Q$ for all $x\in \Xi$, and thus for $\gamma\in\bisections_\Xi$ we have $\spp(\gamma)=\langle\gamma,\gamma\rangle$, which implies $\langle\gamma,\gamma\rangle\gamma=\spp(\gamma)\gamma=\gamma$. Finally, in order to conclude that $\bisections_\Xi$ is a $(Q_\ipi,R_\ipi)$-equivalence bimodule we only need to verify the completeness and distributivity conditions. The two distributivity conditions follow immediately from the distributivity of the actions and the inner products of sheaves on inverse quantal frames. In order to verify completeness, let $Z\subset\bisections_\Xi$ be a compatible set. This means it satisfies both left compatibility and right compatibility. Hence, $\V Z$ is both a local section for the $Q$-sheaf structure and a local section for the $R$-sheaf structure, which means that it is a local bisection, thus concluding the proof.
\end{proof}

Now for each $(S,T)$-bimodule $X$ we write $\lcc(X)$ for the sup-lattice whose elements are the downwards closed subsets of $X$ that are closed under the formation of joins of compatible sets. Notice that, despite the fact that the notation coincides with that of section~\ref{sec:equivcat}, here the meaning of $\lcc(X)$ is slightly different because the completion of the elements of $\lcc(X)$ is done with respect to compatibility rather than left compatibility --- \cf\ Remark~\ref{rem:collision}. The technical details pertaining to the universal properties of $\lcc(X)$ are very similar to those based on Proposition~\ref{prop:presentations} which were used in Lemma~\ref{lem:sheavesfrommodules}, so we will slightly abbreviate the presentation.

\begin{lemma}\label{lem:biprincipal2}
Let $S$ and $T$ be pseudogroups, and $X$ an equivalence $(S,T)$-bimodule. 
Then $\lcc(X)$ is a biprincipal $\lcc(S)$-$\lcc(T)$-bisheaf.
\end{lemma}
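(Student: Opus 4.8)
The plan is to establish that $\lcc(X)$ is a biprincipal $\lcc(S)$-$\lcc(T)$-bisheaf by leveraging the machinery already assembled, specifically Lemma~\ref{lem:biprincipal1} in reverse and the universal-property techniques of Lemma~\ref{lem:sheavesfrommodules}. First I would set up $\lcc(X)$ as a bimodule object: since the distributivity conditions of an equivalence $(S,T)$-bimodule guarantee that the defining relations of $\lcc(X)$ (namely $(C,\{\V C\})$ for compatible $C\subset X$) are jointly stable with respect to both the left $S$-action and the right $T$-action, Proposition~\ref{prop:presentations} (applied on each side) extends both actions uniquely to a $\lcc(S)$-$\lcc(T)$-biaction on $\lcc(X)$, with actions given by the expected formulas $UZ=\V_{u\in U}\V_{z\in Z}(uz)^{\downarrow}$ and dually. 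Because binary joins distribute over compatible joins, the relations are also stable under $\wedge$, so $\lcc(X)$ is a frame, hence in particular a locale on each side.

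Next I would produce the two inner products. The maps $\langle-,-\rangle\colon X\times X\to S$ and $[-,-]\colon X\times X\to T$ are, by the Distributivity~II axiom, join-preserving in the appropriate variable over compatible sets, so the universal property of $\lcc(X)$ as a sup-lattice extends them to
\[
\langle Z,W\rangle=\V_{z\in Z}\V_{w\in W}\langle z,w\rangle^{\downarrow}
\quad\text{and}\quad
[Z,W]=\V_{z\in Z}\V_{w\in W}[z,w]^{\downarrow},
\]
valued in $\lcc(S)$ and $\lcc(T)$ respectively. I would then check that these satisfy left $\lcc(S)$-linearity and right $\lcc(T)$-linearity and the symmetry axioms $\langle Z,W\rangle=\langle W,Z\rangle^{*}$ and $[W,Z]=[Z,W]^{*}$, all of which follow by extending the corresponding biaction axioms (BA1)--(BA5) through the join-dense principal ideals $x^{\downarrow}$. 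The support operators are then $\spp(Z)=\langle Z,Z\rangle\wedge e_{\lcc(S)}$ and $\tspp(Z)=[Z,Z]\wedge e_{\lcc(T)}$, and I expect these to restrict to the extended supports $\V_{z\in Z}p(z)^{\downarrow}$ and $\V_{z\in Z}q(z)^{\downarrow}$, exactly as in Lemma~\ref{lem:sheavesfrommodules}.

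The core of the argument is to verify the \textbf{Bisections} condition, i.e.\ that the principal ideals $x^{\downarrow}$ (for $x\in X$) form a set $\sections$ with $\V_{\gamma\in\sections}\langle Z,\gamma\rangle\gamma=Z=\V_{\gamma\in\sections}\gamma[\gamma,Z]$ for all $Z\in\lcc(X)$. Using axiom (BA3), $\langle x,x\rangle x=x$, one computes for each $z\in Z$ that $\langle z^{\downarrow},z^{\downarrow}\rangle z^{\downarrow}=(\langle z,z\rangle z)^{\downarrow}=z^{\downarrow}$, and summing over the join-dense family of principal ideals below $Z$ recovers $Z$; the right-hand identity follows dually from (BA6). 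This simultaneously shows that each $x^{\downarrow}$ is a local bisection, so that $\bisections_{\lcc(X)}$ contains all principal ideals and is join-dense. I would then identify $\bisections_{\lcc(X)}$ precisely with the principal ideals, by the same downwards-closure argument used at the end of Lemma~\ref{lem:sheavesfrommodules}: a local bisection $\gamma$ is both left and right compatible as a subset of $X$, hence compatible, hence equal to $(\V\gamma)^{\downarrow}$.

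Finally I would dispatch the biprincipal conditions. For the \textbf{Covering conditions}, $\V_{\gamma\in\bisections}\langle\gamma,\gamma\rangle=\V_{x\in X}\langle x,x\rangle^{\downarrow}=\V_{x\in X}p(x)^{\downarrow}=e_{\lcc(S)}$ is exactly the covering condition of the equivalence bimodule as expressed in Lemma~\ref{lem:coveringconditions}, and dually for $T$. For \textbf{Inner product associativity}, evaluating on principal ideals reduces $\langle\gamma,\gamma'\rangle\gamma''=\gamma[\gamma',\gamma'']$ to axiom (BA7), $\langle x,y\rangle z=x[y,z]$. The main obstacle I anticipate is not any single axiom but the bookkeeping needed to confirm that extending the inner products via the sup-lattice universal property is compatible with extending the actions via Proposition~\ref{prop:presentations} --- i.e.\ that the linearity axioms (BA1), (BA4) and Proposition~\ref{lem:biaction-action} survive the passage to joins and that $\bisections_{\lcc(X)}$ really is a Hilbert basis rather than merely join-dense among sections; here I would lean on Proposition~\ref{parseval}(4), which guarantees that any join-dense set of regular sections is a basis.
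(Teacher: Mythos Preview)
Your approach is essentially the same as the paper's: extend the bimodule structure and both inner products to $\lcc(X)$ via the universal properties, show that the principal ideals form a Hilbert basis for each inner product, and then read off the covering and associativity conditions from the equivalence-bimodule axioms.

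There is, however, one genuine gap. When you argue that $\sections=\{x^\downarrow:x\in X\}$ is a Hilbert basis, you verify only the \emph{regularity} equation $\langle x^\downarrow,x^\downarrow\rangle x^\downarrow=x^\downarrow$ and then appeal to Proposition~\ref{parseval}(4). But that proposition asks for a join-dense set of \emph{regular sections}, and by definition a regular section is in particular a Hilbert section, i.e.\ already satisfies $\langle J,\gamma\rangle\gamma\le J$ for every $J$. You never establish this inequality --- your ``summing over the join-dense family of principal ideals below $Z$'' only gives the direction $\ge$ --- so the appeal is circular, and your claim that regularity ``simultaneously shows that each $x^\downarrow$ is a local bisection'' does not follow. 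The paper fills this with a one-line use of (BA7): for $\gamma=x^\downarrow$ and $J\in\lcc(X)$,
\[
\langle J,\gamma\rangle\gamma=\V_{y\in J}\bigl(\langle y,x\rangle x\bigr)^\downarrow=\V_{y\in J}\bigl(y[x,x]\bigr)^\downarrow\subset\V_{y\in J}y^\downarrow=J,
\]
since $[x,x]\in\mathsf{E}(T)$ gives $y[x,x]\le y$. With this in hand, Proposition~\ref{parseval}(4) applies as you intend and the rest of your outline goes through.

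A minor side remark: your detour through ``binary joins distribute over compatible joins, so $\lcc(X)$ is a frame'' is both mis-stated (you mean meets) and unnecessary --- and indeed no analogue of Lemmas~\ref{lem:harpo}--\ref{lem:zeppo} has been proved for bimodules. The paper does not verify any frame structure on $\lcc(X)$ directly; it relies instead on the equivalence between $Q$-sheaves and complete Hilbert $Q$-modules, so once $\lcc(X)$ is shown to be a complete Hilbert $\lcc(S)$-module it is automatically an $\lcc(S)$-sheaf, and the locale structure comes for free.
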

\begin{proof}
The general properties of module presentations allow us to extend the $(S,T)$-bimodule structure of $X$ to an $\lcc(S)$-$\lcc(T)$-bimodule structure on $\lcc(X)$, and checking the bi-action associativity properties is straightforward. Similarly, the inner products of $X$ extend to Hilbert module inner products on $\lcc(X)$. Again, showing that these are Hermitian and left/right-linear is straightforward. Let us just verify the left $\lcc(S)$-equivariance: let $I\in\lcc(S)$ and $J,J'\in\lcc(X)$; then we obtain
\[
\langle IJ,J'\rangle = \V_{a\in I}\V_{x\in J}\V_{y\in J'}\langle ax,y\rangle^{\downarrow} 
=\V_{a\in I}\V_{x\in J}\V_{y\in J'}a^{\downarrow}\langle x,y\rangle^{\downarrow}= I\langle J,J'\rangle\;.
\]
The associativity condition $\langle J,J'\rangle J''= J[J',J'']$ is proved in the same way, so let us prove that there is a common Hilbert basis for the two inner products.
Write $\sections\subset\lcc(X)$ for the set of principal order-ideals. Let $J\in\lcc(X)$ and $x\in X$ be such that $J\subset x^{\downarrow}$. Then $J$ has an upper bound $x$, which implies that it is a compatible set and therefore must contain its own join $y=\V J$. Hence $J=y^{\downarrow}$, and we conclude that $\sections$ is downwards closed in $\lcc(X)$. Moreover, $\V\sections=X$, so $\sections$ is a downwards closed cover of $\lcc(X)$. Furthermore, each $\gamma=x^{\downarrow}\in\sections$ is a Hilbert section with respect to both inner products, since for each $J\in\lcc(X)$ we have
\[
\langle J,\gamma\rangle\gamma =\V_{y\in J}\bigl(\langle y,x\rangle x\bigr)^{\downarrow}=\V_{y\in J} \bigl(y[x,x]\bigr)^{\downarrow}\subset \V_{y\in J}y^{\downarrow}=J\;,
\]
and it is a regular Hilbert section:
\[
\langle\gamma,\gamma\rangle\gamma = \bigl(\langle x,x\rangle x\bigr)^{\downarrow}=x^{\downarrow}=\gamma\;.
\]
From Proposition~\ref{parseval} it follows that $\sections$ is a Hilbert basis for the $\lcc(S)$-module structure, so $\lcc(X)$ is an $\lcc(S)$-sheaf. Analogously we see that it is a right $\lcc(T)$-sheaf. Finally, we have
\[
\V_{\gamma\in\sections} \langle\gamma,\gamma\rangle = \V_{x\in X}\langle x,x\rangle^{\downarrow} = (e_S)^{\downarrow}=e_{\lcc(S)}\;,
\]
and, similarly,
\[
\V_{\gamma\in\sections} [\gamma,\gamma] = e_{\lcc(T)}\;,
\]
so $\lcc(X)$ is a biprincipal $\lcc(S)$-$\lcc(T)$-bisheaf.
\end{proof}

Finally we arrive at Theorem~\ref{theoremB}:

\begin{theorem}\label{them:three} Let $S$ and $T$ be pseudogroups.
Then $S$ and $T$ are Morita equivalent if and only if there is an $(S,T)$-equivalence bimodule.
\end{theorem}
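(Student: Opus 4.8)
The plan is to prove both implications by transporting the statement across the dictionary between pseudogroups and inverse quantal frames supplied by Theorem~\ref{theoremA}, and then invoking the bisheaf/bimodule correspondence packaged in Lemmas~\ref{lem:biprincipal1} and~\ref{lem:biprincipal2} together with the characterization of Theorem~\ref{thm:QRbiprincipal}. Since the genuinely technical work has already been done in those earlier results, the argument here is essentially an assembly: each direction is a short chain through the three bridging layers (pseudogroups, inverse quantal frames, biprincipal bisheaves).

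For the forward direction, suppose $S$ and $T$ are Morita equivalent. By Corollary~\ref{them:groupoids} the inverse quantal frames $\lcc(S)$ and $\lcc(T)$ are then Morita equivalent, so Theorem~\ref{thm:QRbiprincipal} furnishes a biprincipal $\lcc(S)$-$\lcc(T)$-bisheaf $\Xi$. Lemma~\ref{lem:biprincipal1} applied to $\Xi$ shows that $\bisections_\Xi$ is a $(\lcc(S)_\ipi,\lcc(T)_\ipi)$-equivalence bimodule. Finally I would transport this along the natural isomorphisms of pseudogroups $\lcc(S)_\ipi\cong S$ and $\lcc(T)_\ipi\cong T$ (\cf\ \cite[Th.~1.6]{Resende2007}) to obtain an $(S,T)$-equivalence bimodule, completing this implication.

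For the backward direction, suppose $X$ is an $(S,T)$-equivalence bimodule. Lemma~\ref{lem:biprincipal2} says that $\lcc(X)$ is a biprincipal $\lcc(S)$-$\lcc(T)$-bisheaf, whence Theorem~\ref{thm:QRbiprincipal} yields that $\lcc(S)$ and $\lcc(T)$ are Morita equivalent inverse quantal frames; Corollary~\ref{them:groupoids} then returns the conclusion that $S$ and $T$ are Morita equivalent pseudogroups.

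The only step demanding real care, and thus the nearest thing to an obstacle, is the transport of structure along $\lcc(S)_\ipi\cong S$ (and its $T$-counterpart) in the forward direction: one must check that the $(\lcc(S)_\ipi,\lcc(T)_\ipi)$-biaction on $\bisections_\Xi$, together with its two inner products, pulls back under these isomorphisms to a bona fide $(S,T)$-biaction verifying (BA1)--(BA7) as well as the completeness, distributivity, and covering conditions. This is routine once one observes that the maps in question are \emph{isomorphisms of pseudogroups} — they are the restrictions to partial units of the equivalence of categories between pseudogroups and inverse quantal frames — so they preserve products, involution, the natural partial order, and all compatible joins. Consequently every defining axiom of an equivalence bimodule, being phrased purely in these terms, is carried across verbatim, and no further computation is needed.
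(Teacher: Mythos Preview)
Your proof is correct and follows essentially the same route as the paper's own argument: both directions are obtained by chaining Theorem~\ref{theoremA} (equivalently Corollary~\ref{them:groupoids}), Theorem~\ref{thm:QRbiprincipal}, and Lemmas~\ref{lem:biprincipal1}--\ref{lem:biprincipal2}, with the transport along the isomorphisms $S\cong\lcc(S)_\ipi$ and $T\cong\lcc(T)_\ipi$ handled exactly as you describe.
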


\begin{proof}
Assume that $S$ and $T$ are Morita equivalent. Then by Theorem~\ref{theoremA} the inverse quantal frames $\lcc(S)$ and $\lcc(T)$ are Morita equivalent, so there is a biprincipal $\lcc(S)$-$\lcc(T)$-bisheaf $\Xi$ due to Theorem~\ref{thm:QRbiprincipal}. Then Lemma~\ref{lem:biprincipal1} gives us an $(\lcc(S)_\ipi,\lcc(T)_\ipi)$-equivalence bimodule $\bisections_\Xi$, which in turn can be made an $(S,T)$-equivalence bimodule due to the pseudogroup isomorphisms $S\cong\lcc(S)_\ipi$ and $T\cong\lcc(T)_\ipi$.

Now assume that $X$ is an $(S,T)$-equivalence bimodule. Then Lemma~\ref{lem:biprincipal2} gives us a biprincipal $\lcc(S)$-$\lcc(T)$-bisheaf $\lcc(X)$, hence showing that $\lcc(S)$ and $\lcc(T)$ are Morita equivalent due to Theorem~\ref{thm:QRbiprincipal}. Then $S$ and $T$ are Morita equivalent, by Theorem~\ref{theoremA}.
\end{proof}

Although irrelevant for the main conclusion that was the goal of this section, we remark that the above results can be strengthened by establishing isomorphisms $X\cong\sections_{\lcc(X)}$ and $\lcc(\sections_\Xi)\cong\Xi$,  indeed showing that there is an equivalence, analogous to the equivalence of categories of Theorem~\ref{them:one}, between appropriate categories of $(S,T)$-equivalence bimodules and of biprincipal $\lcc(S)$-$\lcc(T)$-bisheaves. This, in turn, can be placed in a wider bicategorical context (\cf\ \cite{Resende2015}).

\section{Enlargements}

In this section, we shall describe a purely algebraic characterization of when two pseudogroups are Morita equivalent.
It relies on the concept of an equivalence bimodule introduced in the previous section.
 
Let $T$ be a pseudogroup and $e$ an idempotent in $T$.
Then $eTe$ is a pseudogroup in its own right with respect to the inherited operations of multiplication, inversion
and compatible joins that we call a {\em local pseudogroup} of $T$.
It is not a `subpseudogroup' because the identity of $eTe$ is $e$ rather than the identity of $T$.
We say that $T$ is a {\em sup-enlargement} of $S = eTe$ if the following two conditions hold:
\begin{description}

\item[{\rm (E1)}] $S = STS$.

\item[{\rm (E2)}] $T = (TST)^{\V}$.

\end{description}
Enlargments were first defined in \cite{Lawson1996} and the above definition was motivated by the additive enlargements defined in \cite{Wehrung}.
The following is actually just a consequence of (E1); 
see \cite{Lawson1996}.

\begin{lemma}\label{lem:arret} Let $T$ be a sup-enlargement of $S = eTe$.
\begin{enumerate}
\item $S$ is an order-ideal of $T$.
\item If $t \in T$ is such that $t^{-1}t,tt^{-1} \in S$ then $t \in S$.
\end{enumerate}
\end{lemma}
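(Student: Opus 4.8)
The plan is to reduce everything to the defining relation $S = eTe$ and to the behaviour of the idempotent $e$ as a two-sided ``local identity''. First I would record the characterization
\[
S = eTe = \{a \in T \st ea = a = ae\},
\]
which holds because $e$ is idempotent: every $a = ete \in eTe$ satisfies $ea = a = ae$, and conversely $ea = a = ae$ forces $a = eae \in eTe$. In particular, an idempotent $f \in T$ lies in $S$ if and only if $f \le e$ (equivalently $ef = fe = f$, using that idempotents commute). Only this relation is needed for both parts.

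For part (1), I would take $a \in S$ and $b \le a$ in $T$, and use the standard descriptions of the natural partial order $b = aa^{-1}b = ba^{-1}a$. Multiplying the first on the left by $e$ and using $ea = a$ gives $eb = (ea)a^{-1}b = aa^{-1}b = b$; multiplying the second on the right by $e$ and using $ae = a$ gives $be = ba^{-1}(ae) = ba^{-1}a = b$. Hence $eb = b = be$, so $b \in S$, and $S$ is downward closed, i.e.\ an order-ideal.

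For part (2), suppose $t \in T$ with $t^{-1}t,\, tt^{-1} \in S$. These are idempotents of $S$, so by the observation above $tt^{-1} \le e$ and $t^{-1}t \le e$, that is $e(tt^{-1}) = tt^{-1}$ and $(t^{-1}t)e = t^{-1}t$. Then
\[
et = e(tt^{-1})t = (tt^{-1})t = t
\quad\text{and}\quad
te = t(t^{-1}t)e = t(t^{-1}t) = t,
\]
so $et = t = te$ and therefore $t = ete \in eTe = S$.

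The computations are entirely routine; the only point requiring care is the bookkeeping with the natural partial order in part (1), namely choosing the two equivalent descriptions $b = aa^{-1}b$ and $b = ba^{-1}a$ so that the absorbing identities $ea = a$ and $ae = a$ can each be applied on the correct side. I do not anticipate any substantial obstacle, consistent with the remark that the statement is ``just a consequence'' of the enlargement data; in fact the argument uses only the relation $S = eTe$.
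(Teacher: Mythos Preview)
Your proof is correct. The paper does not give its own argument here; it simply records that the lemma follows from (E1) and cites \cite{Lawson1996}. Your direct verification from the relation $S = eTe$ via the characterization $S = \{a\in T : ea = a = ae\}$ is exactly the sort of routine check intended, and indeed you correctly observe that only the local-submonoid description $S = eTe$ is used, not the additional enlargement axioms.
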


Let $S$ and $T$ be pseudogroups.
A {\em joint sup-enlargement} of $S$ and $T$ is a pseudogroup $U$ which is a sup-enlargement of both $S$ and $T$.
The following is the first part of Theorem~\ref{theoremC} from the Introduction.

\begin{theorem}\label{them:enlargement} Let the pseudogroups $S$ and $T$ have a joint sup-enlargment $U$.
Then there is an $(S,T)$-equivalence bimodule.
\end{theorem}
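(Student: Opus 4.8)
The plan is to realize the equivalence bimodule concretely as the corner $X = eUf$, where $S = eUe$ and $T = fUf$; explicitly $X = \{u \in U \st \mathbf{r}(u) \le e \text{ and } \mathbf{d}(u) \le f\}$. The left $S$-action and right $T$-action are just multiplication inside $U$ (note that $su \in eUf$ when $s \in eUe$ and $u \in eUf$, since ranges and domains only shrink under multiplication), and the inner products are $\langle x,y\rangle = xy^{-1}$ and $[x,y] = x^{-1}y$. A short check shows that $xy^{-1}$ has both range and domain below $e$, hence lies in $S = eUe$, and dually that $x^{-1}y \in T$; axioms (BA1)--(BA7) are then immediate from associativity and the inverse-semigroup identities in $U$, and the induced supports are $p(x) = \langle x,x\rangle = \mathbf{r}(x)$ and $q(x) = [x,x] = \mathbf{d}(x)$.

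Next I would dispose of the bimodule axioms, all of which descend from $U$. The left- and right-compatibility relations of the biaction unwind to $\mathbf{r}(x)y = \mathbf{r}(y)x$ and $x\mathbf{d}(y) = y\mathbf{d}(x)$, that is, to compatibility in $U$, so a subset of $X$ is compatible exactly when it is compatible in $U$; and the $U$-join of such a family stays in $eUf$ because $e$ fixes it on the left and $f$ on the right. The Completeness axiom, and the two Distributivity axioms (Distributivity~I and~II), then follow at once from the fact that multiplication in the pseudogroup $U$ distributes over compatible joins.

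The crux is the two covering conditions, which by Lemma~\ref{lem:coveringconditions} amount to $\V_{x\in X}\mathbf{r}(x) = e$ and $\V_{x\in X}\mathbf{d}(x) = f$ (here $e = e_S$ and $f = e_T$). The key step I would isolate is the identity $\V_{u\in U} ufu^{-1} = 1_U$, obtained from the enlargement condition (E2) applied to $T$, namely $U = (TUT)^{\V}$. Writing an arbitrary $u \in U$ as a compatible join $\V_i a_i t_i b_i$ with $t_i \in fUf$, and using that the range map preserves compatible joins, one gets $\mathbf{r}(u) = \V_i \mathbf{r}(a_i t_i b_i) \le \V_i a_i \mathbf{r}(t_i) a_i^{-1} \le \V_i a_i f a_i^{-1} \le \V_{v\in U} vfv^{-1}$ (the join of idempotents exists since any set of idempotents is compatible); taking $u = 1_U$ forces $\V_v vfv^{-1} = 1_U$. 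Conjugating into the corner then yields $\V_{u\in U}\mathbf{r}(euf) = \V_u e(ufu^{-1})e = e\bigl(\V_u ufu^{-1}\bigr)e = e$, whence $\V_{x\in X}\mathbf{r}(x) = e$; the dual computation, using (E2) for $S$ (that is, $U = (USU)^{\V}$), gives $\V_{x\in X}\mathbf{d}(x) = f$. I expect this conjugation-and-covering argument to be the main obstacle, as it is the only place where the enlargement hypotheses genuinely enter --- indeed (E1) is not needed for this direction.
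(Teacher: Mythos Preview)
Your proposal is correct and follows essentially the same construction as the paper: the corner $X = eUf$ with actions by multiplication and inner products $\langle x,y\rangle = xy^{-1}$, $[x,y] = x^{-1}y$. The paper merely asserts that the covering conditions ``follow from condition (E2),'' whereas you supply an explicit argument via the identity $\V_{u\in U} ufu^{-1} = 1_U$; note only the harmless slip in quoting (E2) --- it should read $U = (UTU)^{\V}$ rather than $U = (TUT)^{\V}$, and indeed your decomposition $u = \V_i a_i t_i b_i$ with $t_i \in fUf$ already uses the correct form.
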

\begin{proof} Denote the identities of $S$ and $T$ by $e$ and $f$, respectively.
Put $X = \mathsf{E}(S)U\mathsf{E}(T) = eUf$.
Define the action $S \times X \rightarrow X$ by left multiplication and $X \times T \rightarrow X$ by right multiplication.
Define $\langle -,- \rangle \colon X \times X \rightarrow S$ by $\langle x,y \rangle = xy^{-1}$ and 
$[-,-] \colon X \times X \rightarrow T$ by $[x,y] = x^{-1}y$.
Then it is a routine, if somewhat laborious, business to check that with these definitions,
we have defined an equivalence bimodule and so by Theorem~\ref{them:three} we have proved that the pseudogroups $S$ and $T$ are Morita equivalent.
Observe that conditions (1)---(7) in the definition of a biaction are routine to check;
the covering conditions follow from condition (E2) in the definition of an enlargement;
the conditions in the definition of a bimodule follow from the fact that $U$ is a pseudogroup.
\end{proof}

\begin{example}
Denote the finite symmetric inverse monoid on the letters $\{1, \ldots, n\}$ by $I_{n}$.
These are all pseudogroups and they are all Morita equivalent as pseudogroups,
as we now show.
Denote the unique element of $I_{n}$ that maps $\{x\}$ to $\{y\}$ by $f_{y,x}$.
Observe that $f_{1,1}I_{n}f_{1,1} \cong I_{1}$,
and that for any $1 \leq x,y \leq n$, we have that $f_{y,x} = f_{y,1}f_{1,1}f_{1,x}$.
Thus $I_{n} = \left(I_{n}f_{1,1} I_{n} \right)^{\V}$.
It follows that $I_{1}$ is Morita equivalent to $I_{n}$ for all finite $n$ as a pseudogroup.
Thus, since Morita equivalence is an equivalence relation, all finite symmetric inverse monoids are Morita equivalent {\em as pseudogroups}.
However, by \cite[Corollary 5.2]{Steinberg}, Morita equivalent inverse semigroups have the same number of $\mathscr{D}$-classes.
It follows that two finite symmetric inverse monoids $I_{m}$ and $I_{n}$ are Morita equivalent as inverse semigroups
if and only if $m = n$.
\end{example}

The proof of the following lemma is routine.

\begin{lemma}\label{lem:dual} Let $(S,T,X,\langle -,- \rangle,[-,-])$ be an equivalence bimodule.
Define $\overline{X} = \{\bar{x} \st x \in X\}$.
Define a right action of $S$ on $\overline{X}$ by $\bar{x}s = \overline{s^{-1}x}$ and define a left action of
$T$ on $\overline{X}$ by $t\bar{x} = \overline{xt^{-1}}$.
Then $(T,S,\overline{X},[-,-], \langle -,- \rangle)$  is an equivalence bimodule.
We call this the {\em dual} of  $(S,T,X,\langle -,- \rangle,[-,-])$.
\end{lemma}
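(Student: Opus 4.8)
The plan is to prove Lemma~\ref{lem:dual} by directly verifying that the dual data $(T,S,\overline{X},[-,-],\langle-,-\rangle)$ satisfies all the defining axioms of an equivalence bimodule. The guiding principle is that the entire construction is built on a single formal symmetry: the map $x\mapsto\bar x$ exchanges the roles of $S$ and $T$, swaps left and right, swaps the two inner products, and introduces an inversion. Thus the strategy is not to recompute everything from scratch, but to observe that each axiom for the dual is the image under this symmetry of a corresponding axiom (or already-established property) for the original bimodule $X$.

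First I would record the translation dictionary explicitly: the new left $T$-action is $t\bar x=\overline{xt^{-1}}$, the new right $S$-action is $\bar xs=\overline{s^{-1}x}$, the new left inner product (valued in $T$) is $[\bar x,\bar y]_{\mathrm{new}}=[x,y]$, and the new right inner product (valued in $S$) is $\langle\bar x,\bar y\rangle_{\mathrm{new}}=\langle x,y\rangle$. Next I would verify that these are genuine actions: associativity of the left $T$-action, $t(t'\bar x)=(tt')\bar x$, reduces to $\overline{x(t')^{-1}t^{-1}}=\overline{x(tt')^{-1}}$, which is immediate from $(tt')^{-1}=(t')^{-1}t^{-1}$; the right $S$-action is checked the same way, and the biaction compatibility $(t\bar x)s=t(\bar xs)$ follows from the original compatibility $(sy)t=s(yt)$ applied to $y=x$ after inverting. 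Then I would run through the seven biaction axioms (BA1)--(BA7) one at a time, in each case rewriting the dual statement in terms of the barred data and checking it against the original axioms together with Proposition~\ref{lem:biaction-action}. For example, (BA1) for the dual, $[t\bar x,\bar y]_{\mathrm{new}}=t[\bar x,\bar y]_{\mathrm{new}}$, unwinds to $[xt^{-1},y]=t[x,y]$, which is exactly part~(2) of Proposition~\ref{lem:biaction-action}; the Hermitian axioms (BA2) and (BA5) swap into each other; and the associativity axiom (BA7) for the dual, $[\bar x,\bar y]_{\mathrm{new}}\,\bar z=\bar x\langle\bar y,\bar z\rangle_{\mathrm{new}}$, becomes a restatement of the original (BA7) after applying the action definitions and inverting.

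Finally I would address the completeness, distributivity, and covering conditions. Completeness and Distributivity~I/II transfer because $x\mapsto\bar x$ sends compatible subsets of $X$ to compatible subsets of $\overline X$ (the notion of compatibility is symmetric in left/right, which is precisely why we use two-sided compatibility here) and sends joins to joins, so the required existence of joins and the distributivity of the dual actions and dual inner products follow from the corresponding properties of $X$. The two covering conditions for the dual are again the two covering conditions for $X$ with their roles interchanged: $\bigvee_{\bar x}[\bar x,\bar x]_{\mathrm{new}}=\bigvee_x[x,x]=e_T$ and $\bigvee_{\bar x}\langle\bar x,\bar x\rangle_{\mathrm{new}}=\bigvee_x\langle x,x\rangle=e_S$, using Lemma~\ref{lem:coveringconditions}. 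The only genuine care required—and the step I expect to be the mildest obstacle—is bookkeeping: keeping the parity conventions straight so that each inversion lands on the correct side, and confirming that the single inversion built into the action definitions is exactly what is needed to turn left-linearity into right-linearity and vice versa. Since every step is a mechanical application of the symmetry together with results already in hand, the proof is indeed routine, as the statement asserts.
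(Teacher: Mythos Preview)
Your proposal is correct and matches the paper's approach: the paper simply states that the proof is routine, and your detailed verification via the symmetry dictionary is exactly the routine check the authors have in mind. The bookkeeping you outline (actions, (BA1)--(BA7) via Proposition~\ref{lem:biaction-action}, completeness, distributivity, and covering conditions) is accurate and complete.
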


The next lemma will be useful in the proof of our main theorem.

The following is an adaptation of the main construction of \cite{BGR} to this setting.
The generalized rook conditions are a slight extension of what is to be found in \cite{Hines, Wallis, KLLR}.
It proves the second part of Theorem~\ref{theoremC} in the Introduction.

\begin{theorem}\label{them:rieffel} Let $S$ and $T$ be pseudogroups with an $(S,T)$-equivalence bimodule.
Then $S$ and $T$ have a joint sup-enlargement.
\end{theorem}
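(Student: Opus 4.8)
The plan is to construct the joint sup-enlargement $U$ directly from the equivalence bimodule data, mimicking the classical ``linking algebra'' construction familiar from the $C^*$-algebraic setting. Given an $(S,T)$-equivalence bimodule $(S,T,X,\langle -,- \rangle,[-,-])$, I would first form its dual $\overline{X}$ using Lemma~\ref{lem:dual}, and then assemble a single set $U$ out of the four ``blocks''
\[
U = S \sqcup X \sqcup \overline{X} \sqcup T,
\]
thought of as formal $2\times 2$ matrices with entries $\begin{pmatrix} s & x \\ \bar{y} & t\end{pmatrix}$ where $s\in S$, $t\in T$, $x\in X$, $\bar y\in\overline{X}$. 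The idea is that $S$ sits in the top-left corner as $eUe$ and $T$ in the bottom-right corner as $fUf$, where $e,f$ will be the idempotents given by the identities $e_S,e_T$; the off-diagonal corners recover $X$ and $\overline{X}$. The remaining work is to equip $U$ with the structure of a pseudogroup and to verify the two enlargement axioms (E1), (E2) for each of $S$ and $T$.

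First I would define the multiplication on $U$ by the obvious matrix formula, using the two actions to multiply diagonal-against-off-diagonal terms and using the two inner products to multiply off-diagonal-against-off-diagonal terms; concretely, products such as $sx$, $xt$, $\langle x,y\rangle$ and $[x,y]$ all feed into the matrix product, and axiom (BA7), namely $\langle x,y\rangle z = x[y,z]$, is exactly the associativity constraint that makes this matrix multiplication well-defined and associative. The involution is the matrix transpose-conjugate, sending $x\mapsto\bar x$ and using (BA2), (BA5). Next I would verify that $U$ is an inverse semigroup: the key point is that $\mathbf{d}$ and $\mathbf{r}$ of a matrix element are the diagonal idempotents built from the supports $p(x)=\langle x,x\rangle$ and $q(x)=[x,x]$ of Lemma~\ref{surjectivia}, and axioms (BA3), (BA6) guarantee $uu^{-1}u = u$. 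Then I would promote $U$ to a pseudogroup by defining joins block-wise: compatibility in $U$ restricts to compatibility in each block, and the completeness and two distributivity axioms of an $(S,T)$-bimodule are precisely what is needed to show that compatible joins exist in $U$ and that multiplication distributes over them. With $e=e_S$ and $f=e_T$ viewed as the diagonal idempotents, one checks $eUe\cong S$ and $fUf\cong T$.

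Finally I would verify the enlargement axioms. Axiom (E1) for $S$, i.e. $S=SXS$ reinterpreted as $e U e = (eUe)(eUe)(eUe)$, is automatic since $S$ is a pseudogroup with identity $e$; the substantive content is (E2), $U=(USU)^{\V}$, which must be checked corner by corner. The off-diagonal corner $X = (XSX)$-type expressions and the bottom-right corner $T$ must both be reachable as compatible joins of products passing through the top-left corner $S$: this is exactly where the \textbf{covering conditions} of the equivalence bimodule enter, via Lemma~\ref{lem:coveringconditions}, since $\V_{x\in X}\langle x,x\rangle = e_S$ lets one resolve elements of $X$ and of $\overline{X}$ through $S$, and likewise $\V_{x\in X}[x,x]=e_T$ resolves the $T$-corner through $S$ using (BA7). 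I expect the \emph{main obstacle} to be not any single algebraic identity but the bookkeeping required to show that the block-defined joins genuinely make $U$ a pseudogroup --- in particular, verifying that a compatible subset of $U$ decomposes into compatible subsets of the four blocks, and that the mixed products arising in the distributivity checks are controlled by Distributivity~I and Distributivity~II of the bimodule definition. Once that is in place, (E1) and (E2) for $T$ follow by the symmetry built into the dual $\overline{X}$, completing the construction of a joint sup-enlargement.
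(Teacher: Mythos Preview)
Your overall strategy---the linking-algebra construction adapted from Brown--Green--Rieffel---is exactly the route the paper takes, and your identification of (BA7) as the source of associativity and of the covering conditions as the engine behind (E2) is correct. But there is a genuine gap in how you set up the carrier set $U$.

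Writing $U = S \sqcup X \sqcup \overline{X} \sqcup T$ cannot be right: this disjoint union has no identity, since the identity of the linking pseudogroup must be the diagonal matrix with entries $e_S$ and $e_T$, which lives in no single block; more generally, elements of distinct blocks such as $e_S$ and $e_T$ are compatible yet have no join in the disjoint union, so it is not a pseudogroup and your claim that ``compatibility in $U$ restricts to compatibility in each block'' already fails. If instead you mean the full set of $2\times 2$ matrices $S\times X\times \overline X\times T$, as your displayed matrix suggests, then the matrix product is not well defined: the top-left entry of a product is $s_1 s_2 \vee \langle x_1, y_2\rangle$, and in a pseudogroup this join exists only when the two terms are compatible, which fails for generic entries.

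The paper resolves this by taking $U$ to be the set of those matrices satisfying the \emph{generalized rook conditions}
\[
s^{-1}x = 0,\qquad yt = 0,\qquad sy = 0,\qquad xt^{-1} = 0,
\]
which force all the compatibilities needed for the product (for instance $s_1^{-1}x_1=0$ and $s_2 y_2=0$ yield $s_1 s_2 \sim \langle x_1,y_2\rangle$ via Lemma~\ref{surjectivia}) and are themselves preserved under multiplication, inversion, and compatible joins. Once this restriction is in place, the remainder of your outline---checking $U$ is an inverse monoid, that joins are computed entrywise, and that (E2) follows from the covering conditions---matches the paper's proof.
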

\begin{proof} We shall construct a pseudogroup $U$ that contains copies of $S$ and $T$ and is a sup-enlargement
of both.
This is a sequence of laborious verifications.
We simply give the main stages here.
Denote the identity of $S$ by $e_{S}$ and the identity of $T$ by $e_{T}$.
Let 
$$(S,T,X,\langle -,- \rangle,[-,-])$$ 
be an $(S,T)$-equivalence bimodule.
By Lemma~\ref{lem:dual}, we can also construct the dual equivalence bimodule
$$(T,S,\overline{X},[-,-], \langle -,- \rangle).$$
Denote by
$$U = \left(\begin{array}{cc} S & X \\ \overline{X} & T \end{array} \right)$$
the set of all $2 \times 2$ matrices
$$\left(\begin{array}{cc} s & x \\ \bar{y} & t \end{array} \right)$$
satisfying what we shall term the {\em generalized rook conditions}:
\begin{itemize}
\item $s^{-1}x = 0$.
\item $yt = 0$. 
\item $sy = 0$. 
\item $xt^{-1} = 0$.
\end{itemize}
Define a product in $U$ by
$$
\left(\begin{array}{cc} s_{1} & x_{1} \\ \overline{y_{1}} & t_{1} \end{array} \right)
\left(\begin{array}{cc} s_{2} & x_{2} \\ \overline{y_{2}} & t_{2} \end{array} \right)
=
\left(\begin{array}{cc} s_{1}s_{2} \vee \langle x_{1}, y_{2} \rangle & s_{1}x_{2} \vee x_{1}t_{2} 
\\ \overline{s_{2}^{-1}y_{1} \vee y_{2}t_{1}^{-1}} & [y_{1},x_{2}] \vee t_{1}t_{2} \end{array} \right).$$
This is really a matrix product in which $\vee$ replaces addition;
multiplication is treated in various ways:
as semigroup multiplication,
as a semigroup action,
and we treat a product of $x$ and $\bar{y}$ as $\langle x, y \rangle$
and we treat a product of $\bar{x}$ and $y$ as $[x,y]$.\\

\noindent
(1) {\em The product is well-defined.} 
We need to check that the joins as written are defined:
\begin{itemize}

\item The fact that $s_{1}s_{2} \sim \langle x_{1}, y_{2} \rangle$ follows from the fact that $s_{1}^{-1}x_{1} = 0$ and $s_{2}y_{2} = 0$.

\item To prove that $s_{1}x_{2} \sim x_{1}t_{2}$ we show that $\langle s_{1}x_{2}, x_{1}t_{2} \rangle = 0$
and $[s_{1}x_{2}, x_{1}t_{2}] = 0$ using Lemma~\ref{surjectivia}.
To do this, requires us to use $x_{2}t_{2}^{-1} = 0$ and $s_{1}^{-1}x_{1} = 0$
and Proposition~\ref{lem:biaction-action}.

\item To prove that $s_{2}^{-1}y_{1} \sim y_{2}t_{1}^{-1}$ is similar to the above proof using the fact that $y_{1}t_{1} = 0$ and $s_{2}y_{2} = 0$.

\item The fact that $[y_{1},x_{2}] \sim t_{1}t_{2}$ follows from the fact that $y_{1}t_{1} = 0$ and $x_{2}t_{2}^{-1} = 0$.

\end{itemize}
Thus all the joins that are claimed to exist, do really exist.
It is routine to check that the generalized rook conditions hold for the new matrix;
in checking these, you will find it useful to observe that if $ ax = 0$ and $b \leq a$ then $bx = 0$.\\

\noindent
(2) {\em $U$ is a monoid with zero.}
We check that the multiplication is associative.
We therefore have to calculate the following product in two different ways and show that they are equal:
$$\left(\begin{array}{cc} s_{1} & x_{1} \\ \overline{y_{1}} & t_{1} \end{array} \right)
\left(\begin{array}{cc} s_{2} & x_{2} \\ \overline{y_{2}} & t_{2} \end{array} \right)
\left(\begin{array}{cc} s_{3} & x_{3} \\ \overline{y_{3}} & t_{3} \end{array} \right).$$
When we multiply the first two matrices together first,  we get the following four elements:
\begin{enumerate}
\item $s_{1}s_{2}s_{3} \vee \langle x_{1}, y_{2}\rangle s_{3} \vee \langle s_{1}x_{2} \vee x_{1}t_{2}, y_{3} \rangle$. 
We have used the fact that multiplication distributes over joins.
We can expand the final term to get
$s_{1}s_{2}s_{3} \vee \langle x_{1}, y_{2}\rangle s_{3} 
\vee
s_{1}\langle x_{2}, y_{3} \rangle
\vee
\langle x_{1}t_{2}, y_{3} \rangle$.

\item $s_{1}s_{2}x_{3} \vee \langle x_{1}, y_{2}\rangle x_{3} \vee s_{1}x_{2}t_{3} \vee x_{1}t_{2}t_{3}$.
We have used the fact that the action distributes over joins.
\item $s_{3}^{-1}s_{2}^{-1}y_{1} \vee s_{3}^{-1}y_{2}t_{1}^{-1} \vee y_{3}[x_{2}, y_{1}] \vee y_{3}t_{2}^{-1}t_{1}^{-1}$.

\item $[s_{2}^{-1}y_{1} \vee y_{2}t_{1}^{-1}, x_{3}] \vee [y_{1}, x_{2}]t_{3} \vee t_{1}t_{2}t_{3}$.
This expands to the following
$[s_{2}^{-1}y_{1}, x_{3}] \vee [y_{2}t_{1}^{-1}, x_{3}] \vee [y_{1}, x_{2}]t_{3} \vee t_{1}t_{2}t_{3}$.

\end{enumerate}
When we multiply the second two matrices together first, we get the following four elements:
\begin{enumerate}
\item $s_{1}s_{2}s_{3} \vee s_{1}\langle x_{2}, y_{3} \rangle \vee \langle x_{1}, s_{3}^{-1}y_{2} \vee y_{3}t_{2}^{-1}\rangle$.
We can expand the final term to get
$s_{1}s_{2}s_{3} \vee s_{1}\langle x_{2}, y_{3} \rangle \vee 
\langle x_{1}, s_{3}^{-1}y_{2} \rangle
\vee
\langle x_{1}, y_{3}t_{2}^{-1}\rangle$.
By \cite[Prop.~2.3 part (3)]{Steinberg},
we have that $\langle x_{1}, s_{3}^{-1}y_{2} \rangle = \langle x_{1}, y_{2} \rangle s_{3}$
and by \cite[Prop.~2.3 part (9)]{Steinberg},
we have that $\langle x_{1}, y_{3}t_{2}^{-1} \rangle = \langle x_{1}t_{2}, y_{3}\rangle$.

\item $s_{1}s_{2}x_{3} \vee s_{1}x_{2}t_{3} \vee x_{1}[y_{2},x_{3}] \vee x_{1}t_{2}t_{3}$.
This is equal to 
$s_{1}s_{2}x_{3} \vee s_{1}x_{2}t_{3} \vee \langle x_{1}, y_{2} \rangle x_{3} \vee x_{1}t_{2}t_{3}$.

\item $s_{3}^{-1}s_{2}^{-1}y_{1} \vee \langle y_{3}, x_{2} \rangle y_{1} \vee s_{3}^{-1}y_{2}t_{1}^{-1} \vee y_{3}t_{2}^{-1}t_{1}^{-1}$.
This is equal to 
$s_{3}^{-1}s_{2}^{-1}y_{1} \vee y_{3}[x_{2}, y_{1}] \vee s_{3}^{-1}y_{2}t_{1}^{-1} \vee y_{3}t_{2}^{-1}t_{1}^{-1}$.

\item $[y_{1}, s_{2}x_{3} \vee x_{2}t_{3}] \vee t_{1}[y_{2},x_{3}] \vee t_{1}t_{2}t_{3}$.
This expands to
$[y_{1}, s_{2}x_{3}] \vee [y_{1}, x_{2}t_{3}] \vee t_{1}[y_{2}, y_{3}] \vee t_{1}t_{2}t_{3}$.
\end{enumerate}
We need to show that corresponding entries (these have the same number labelling) are equal.
These all check out, often using Proposition~\ref{lem:biaction-action}.
It is easy to verify that
the identity is
$$\left(\begin{array}{cc} e_{S} & 0 \\ \bar{0} & e_{T} \end{array} \right)$$
and the zero is
$$\left(\begin{array}{cc} 0 & 0 \\ \bar{0} & 0 \end{array} \right).$$

\noindent
(3) {\em $U$ is an inverse monoid.}
Define
$$\left(\begin{array}{cc} s & x \\ \bar{y} & t \end{array} \right)^{-1}
=
\left(\begin{array}{cc} s^{-1} & y \\ \bar{x} & t^{-1} \end{array} \right).$$
It can be checked that this is a well-defined operation meaning that
the generalized rook conditions hold.
Then
$$\left(\begin{array}{cc} s & x \\ \bar{y} & t \end{array} \right)
=
\left(\begin{array}{cc} s & x \\ \bar{y} & t \end{array} \right)
\left(\begin{array}{cc} s^{-1} & y \\ \bar{x} & t^{-1} \end{array} \right)
\left(\begin{array}{cc} s & x \\ \bar{y} & t \end{array} \right)$$
and
$$
\left(\begin{array}{cc} s & x \\ \bar{y} & t \end{array} \right)
\left(\begin{array}{cc} s^{-1} & y \\ \bar{x} & t^{-1} \end{array} \right)
=
\left(\begin{array}{cc} ss^{-1} \vee \langle x,x \rangle & 0 \\ \bar{0} & [y,y] \vee tt^{-1} \end{array} \right)$$
and
$$
\left(\begin{array}{cc} s^{-1} & y \\ \bar{x} & t^{-1} \end{array} \right)
\left(\begin{array}{cc} s & x \\ \bar{y} & t \end{array} \right)
=
\left(\begin{array}{cc} s^{-1}s \vee \langle y,y \rangle & 0 \\ \bar{0} & [x,x] \vee t^{-1}t \end{array} \right).
$$
The idempotents of $U$ can be characterized as precisely those elements of the form
$$\left(\begin{array}{cc} e & 0 \\ \bar{0} & f \end{array} \right)$$
where $e \in \mathsf{E}(S)$ and $f \in \mathsf{E}(T)$.
It follows that the idempotents commute and so $U$ is an inverse monoid with zero.\\

\noindent
(3) {\em $U$ contains copies of $S$ and $T$. }
Define
$$\mathbf{e}_{S} = \left(\begin{array}{cc} e_{S} & 0 \\ \bar{0} & 0 \end{array} \right)
\text{ and }
\mathbf{e}_{T} = \left(\begin{array}{cc} 0 & 0 \\ \bar{0} & e_{T} \end{array} \right).$$
Then
$$S \cong \mathbf{e}_{S}U\mathbf{e}_{S} = S'
\text{ and }
T \cong \mathbf{e}_{T}U \mathbf{e}_{T} = T'.$$
It follows that $U$ contains copies of $S$ and $T$.\\

\noindent
(4) {\em $U$ is a pseudogroup.} If
$$
\left(\begin{array}{cc} s_{1} & x_{1} \\ \bar{y}_{1} & t_{1} \end{array} \right)
\sim
\left(\begin{array}{cc} s_{2} & x_{2} \\ \bar{y}_{2} & t_{2} \end{array} \right)$$
then
$s_{1} \sim s_{2}$, $x_{1} \sim x_{2}$, $y_{1} \sim y_{2}$ and $t_{1} \sim t_{2}$.
It can now be checked that
$$
\left(\begin{array}{cc} s_{1} & x_{1} \\ \bar{y}_{1} & t_{1} \end{array} \right)
\vee
\left(\begin{array}{cc} s_{2} & x_{2} \\ \bar{y}_{2} & t_{2} \end{array} \right)$$
exists and is equal to
$$
\left(\begin{array}{cc} s_{1} \vee s_{2} & x_{1} \vee x_{2} \\ \overline{y_{1} \vee y_{2}} & t_{1} \vee t_{2} \end{array} \right).$$
It is routine to check that this is a well-defined operation.
We now prove that $U$ is a pseudogroup.
By symmetry, it is enough to check that multiplication distributes from the left over arbitrary joins.
We calculate
$$\left(\begin{array}{cc} s & x \\ \bar{y} & t \end{array} \right)
\left(\bigvee_{i \in I}
\left(\begin{array}{cc} s_{i} & x_{i} \\ \bar{y_{i}} & t_{i} \end{array} \right)
\right).
$$
This is equal to
$$\left(\begin{array}{cc} s & x \\ \bar{y} & t \end{array} \right)
\left(\begin{array}{cc} \bigvee_{i \in I} s_{i} & \bigvee_{i \in I} x_{i} \\ \overline{(\bigvee_{i \in I} y_{i})} & \bigvee_{i \in I} t_{i} \end{array} \right).
$$
It is now routine to check the answer.\\

\noindent
(5) {\em $U$ is an enlargement of the copy of $S$.}
It remains to show that condition (E2) in the definition of an enlargement holds.
We shall do this with respect to $S$, since by symmetry the result will then be true for $T$.
We shall use the copy $S'$ of $S$.
Let 
$$\left(\begin{array}{cc} s & x \\ \bar{y} & t \end{array} \right)$$
be an arbitrary element of $U$.
Then
$$\left(\begin{array}{cc} s & x \\ \bar{y} & t \end{array} \right)
=
\left(\begin{array}{cc} \V_{i \in I} \langle x_{i}, y_{i} \rangle & x \\ \bar{y} & \V_{j \in J} [u_{j},v_{j}] \end{array} \right)$$
where we have used Lemma~\ref{lem:coveringconditions}.
Consider now the following four kinds of element of $U$:
\begin{enumerate}
\item $\left(\begin{array}{cc} \langle x_{i}, y_{i} \rangle & 0 \\ \bar{0} & 0 \end{array} \right)$.
\item $\left(\begin{array}{cc} 0 & 0 \\ \bar{y} & 0 \end{array} \right) 
= \left(\begin{array}{cc} 0 & 0 \\ \bar{y} & 0 \end{array} \right)\left(\begin{array}{cc} \langle y,y \rangle & 0 \\ \bar{0} & 0 \end{array} \right)$.
\item $\left(\begin{array}{cc}  0 & x \\ \bar{0} & 0 \end{array} \right)
= \left(\begin{array}{cc} \langle x, x \rangle & 0 \\ \bar{0} & 0 \end{array} \right)\left(\begin{array}{cc} 0  & x \\ \bar{0} & 0 \end{array} \right)$.
\item $\left(\begin{array}{cc} 0 & 0 \\ \bar{0} & [u_{j},v_{j}] \end{array} \right)
= 
\left(\begin{array}{cc} 0 & 0 \\ \bar{u}_{j} & 0 \end{array} \right)
\left(\begin{array}{cc} \langle u_{j}, u_{j} \rangle  & 0 \\ \bar{0} & 0 \end{array} \right)
\left(\begin{array}{cc} 0  & v_{j} \\ \bar{0} & 0 \end{array} \right)$.
\end{enumerate}
It follows that each of these four kinds of elements belongs to $US'U$ and so each element of $U$ is a join of elements of $US'U$.
\end{proof}

\section{Morita invariant properties}

A property $\mathscr{P}$ of pseudogroups is said to be {\em Morita invariant} if whenever a pseudogroup $S$ has the property $\mathscr{P}$
and $T$ is Morita equivalent to $S$ then $T$ has the property $\mathscr{P}$.
In this section, we describe two Morita invariant properties.

Let $S$ be a pseudogroup.
A {\em sup-ideal} of $S$ is an ideal $I \subseteq S$ which is closed under arbitrary compatible joins.

\begin{lemma}\label{lem:ideal} Let $S$ be a pseudogroup.
Let $A$ be an ideal.
Then $A^{\V}$ is a sup-ideal.
\end{lemma}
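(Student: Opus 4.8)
The plan is to verify the two defining properties of a sup-ideal for $A^{\V}$ in turn: first that $A^{\V}$ is an ideal of $S$, and then that it is closed under arbitrary compatible joins. Throughout I will use that, by definition, an element of $A^{\V}$ is exactly a join $\V B$ of some compatible subset $B \subseteq A$.

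First I would check the ideal condition. Fix $s \in S$ and an element $\V B \in A^{\V}$, where $B \subseteq A$ is compatible. By the distributivity axiom of a pseudogroup, $s(\V B) = \V_{b \in B} sb$, and the family $\{sb \st b \in B\}$ is again compatible by the observation recorded in the definition of a pseudogroup (namely that $b \sim b'$ implies $sb \sim sb'$). Since $A$ is an ideal we have $sb \in A$ for every $b \in B$, so $\{sb \st b \in B\}$ is a compatible subset of $A$, whence $s(\V B) = \V_{b\in B} sb \in A^{\V}$. The symmetric argument on the right gives $(\V B)s \in A^{\V}$. Thus $SA^{\V}, A^{\V}S \subseteq A^{\V}$, so $A^{\V}$ is an ideal.

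Next I would establish closure under compatible joins. Let $\{c_i \st i \in I\}$ be a compatible subset of $A^{\V}$, and write each $c_i = \V B_i$ with $B_i \subseteq A$ compatible. The key step is to show that $B := \bigcup_{i \in I} B_i$ is itself a compatible subset of $A$. For $b, b'$ lying in the same $B_i$ this is immediate; for $b \in B_i$ and $b' \in B_j$ we have $b \leq c_i$ and $b' \leq c_j$ with $c_i \sim c_j$, so $b \sim b'$ follows from the standard fact that the compatibility relation of an inverse semigroup is inherited downwards (if $c \sim c'$, $b \leq c$ and $b' \leq c'$, then $b \sim b'$). Granting this, $B$ is a compatible subset of $A$, so $\V B$ exists and lies in $A^{\V}$; and by associativity of joins $\V B = \V_{i\in I}\bigl(\V B_i\bigr) = \V_{i \in I} c_i$, whence $\V_{i\in I} c_i \in A^{\V}$.

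The main obstacle is the downward-inheritance claim for compatibility used in the last paragraph; this is where the content lies, since once $B$ is known to be compatible the rest is bookkeeping about joins. It is a standard inverse-semigroup fact (see \cite{Lawson1998}), which can also be verified directly from the characterization $b \leq c \iff b = \mathbf{r}(b)c$ together with the idempotency of $c^{-1}c'$ and $cc'^{-1}$ and the commutativity of idempotents. With this in hand, both properties are established and $A^{\V}$ is a sup-ideal.
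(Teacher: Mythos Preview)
Your proof is correct and follows the same approach as the paper: the ideal property is obtained from distributivity of multiplication over compatible joins, and sup-closure is established separately. The paper's proof simply asserts that $A^{\V}$ is sup-closed without justification and then notes the ideal property via distributivity, so your argument is the same in outline but supplies the details the paper omits---in particular the downward-inheritance of compatibility needed to show $\bigcup_i B_i$ is compatible.
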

\begin{proof} The set $A^{\V}$ is sup-closed.
It remains only to check that it is an ideal.
But this follows from the fact that products distribute over arbitrary compatible joins.
\end{proof}

The following definition is motivated by \cite{K}.
A pseudogroup $S$ is said to be {\em $0$-simplifying} if there are no non-trivial sup-ideals.
Let $e$ and $f$ be idempotents.
A subset $X \subseteq S$ is called a {\em pencil} from $e$ to $f$ if 
$e = \V_{x \in X} \mathbf{d}(x)$ and $f \geq \V_{x \in X} \mathbf{r}(x)$.
Define $e \preceq f$ if there is a pencil from $e$ to $f$.
The proof of the following is straightforward.

\begin{lemma}\label{lem:pencil} 
The relation $\preceq$ is a preorder on the set $\mathsf{E}(S)$.
\end{lemma}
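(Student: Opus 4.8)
The plan is to verify the two defining properties of a preorder directly from the definition of $\preceq$, namely reflexivity and transitivity. For reflexivity, given an idempotent $e \in \mathsf{E}(S)$, I would exhibit an explicit pencil from $e$ to $e$. The natural candidate is the singleton $X = \{e\}$: since $e$ is idempotent we have $\mathbf{d}(e) = e^{-1}e = e$ and $\mathbf{r}(e) = ee^{-1} = e$, so $\V_{x \in X}\mathbf{d}(x) = e$ and $\V_{x \in X}\mathbf{r}(x) = e \geq e$ (here I am using that an idempotent is its own inverse). Thus $\{e\}$ is a pencil from $e$ to $e$ and $e \preceq e$.

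The substantive part is transitivity. Suppose $e \preceq f$ and $f \preceq g$, witnessed by pencils $X$ from $e$ to $f$ and $Y$ from $f$ to $g$, so that
\[
e = \V_{x \in X}\mathbf{d}(x), \quad f \geq \V_{x \in X}\mathbf{r}(x), \quad f = \V_{y \in Y}\mathbf{d}(y), \quad g \geq \V_{y \in Y}\mathbf{r}(y).
\]
The natural candidate for a pencil from $e$ to $g$ is the set of composites $Z = \{yx \st x \in X,\ y \in Y\}$ (discarding zero products). I would need to check two things: first that each product $yx$ makes sense and the relevant composites behave well, and second that $Z$ is genuinely a pencil, i.e. that $\V_{z \in Z}\mathbf{d}(z) = e$ and $\V_{z \in Z}\mathbf{r}(z) \leq g$. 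The key computation is on the domain side: using that multiplication distributes over compatible joins in a pseudogroup and the standard inverse-semigroup identity $\mathbf{d}(yx) = \mathbf{d}(\mathbf{r}(x)\,\mathbf{d}(y)\,x)$ (equivalently $\mathbf{d}(yx) = x^{-1}\mathbf{d}(y)x$), I would show that summing the $\mathbf{d}(yx)$ over $y \in Y$ recovers $x^{-1}fx = \mathbf{d}(x)$ (since $\mathbf{r}(x) \leq f$ forces $x^{-1}fx = x^{-1}x = \mathbf{d}(x)$), and then summing over $x \in X$ recovers $e$. On the range side, $\mathbf{r}(yx) \leq \mathbf{r}(y) \leq g$ for each composite, so the join of the ranges stays below $g$.

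The main obstacle I expect is the domain computation, which requires care in interchanging joins with the idempotent-extraction maps $\mathbf{d}$ and $\mathbf{r}$ and in applying the distributivity of multiplication over compatible joins. One must confirm that the relevant families really are compatible so that the joins are defined and distributivity applies; here the fact that everything lies below idempotents of the frame $\mathsf{E}(S)$, together with the observation that elements bounded by a common upper bound are compatible, should guarantee this. The range inequality is comparatively routine since it only needs the order-theoretic bound $\mathbf{r}(yx) \leq \mathbf{r}(y)$. Once both pencil conditions are verified, $Z$ witnesses $e \preceq g$, establishing transitivity and completing the proof that $\preceq$ is a preorder.
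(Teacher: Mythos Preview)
The paper omits the proof entirely, declaring it ``straightforward,'' and your proposal supplies exactly the natural argument one would expect: the singleton $\{e\}$ for reflexivity, and the composite set $Z=\{yx: x\in X,\ y\in Y\}$ for transitivity, with the domain computation $\V_{y}\mathbf d(yx)=x^{-1}\bigl(\V_y\mathbf d(y)\bigr)x=x^{-1}fx=\mathbf d(x)$ using $\mathbf r(x)\le f$. Your care about compatibility is well placed but easily resolved, since every $\mathbf d(yx)$ is an idempotent and idempotents in an inverse semigroup are pairwise compatible; hence all the relevant joins exist and distributivity applies.
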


Denote by $\equiv$ the equivalence relation induced on $\mathsf{E}(S)$ by $\preceq$.
Compare the following with \cite[Corollary, page 150]{K}; despite appearances, there is no number missing following the word `Corollary'
in this reference.

\begin{lemma}\label{lem:simple} Let $S$ be a pseudogroup.
The following are equivalent.
\begin{enumerate}
\item $S$ is $0$-simplifying.
\item $\equiv$ is the universal relation on the set of non-zero idempotents.
\item For any two non-zero idempotents $e$ and $f$ 
there is a subset $Z \subseteq S$ such that $e = \V_{z \in Z} \mathbf{d}(z)$ and $f = \V_{z \in Z} \mathbf{r}(z)$.
\end{enumerate}
\end{lemma}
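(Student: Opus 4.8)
The plan is to prove the cycle of implications (1) $\Rightarrow$ (2) $\Rightarrow$ (3) $\Rightarrow$ (1). The unifying observation, which I would establish first, is that $0$-simplicity can be tested on \emph{principal} sup-ideals: by Lemma~\ref{lem:ideal} the smallest sup-ideal containing an idempotent $e$ is $I_e = (SeS)^{\V}$, and every non-zero sup-ideal $J$ contains a non-zero idempotent (namely $\mathbf{d}(a) = a^{-1}a \in SJ \subseteq J$ for any $0 \neq a \in J$, which is non-zero since otherwise $a = a\mathbf{d}(a) = 0$). Hence $S$ is $0$-simplifying if and only if $I_e = S$ for every non-zero idempotent $e$. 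Throughout I would use freely that the idempotents of a pseudogroup form a frame (so arbitrary families of idempotents have joins), that $\mathbf{d}$ and $\mathbf{r}$ preserve compatible joins (so $\mathbf{d}(\V w_i) = \V \mathbf{d}(w_i)$), and that every ideal is an order-ideal; these are routine and I would relegate them to one-line remarks.

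For (1) $\Rightarrow$ (2), fix non-zero idempotents $e,f$. Since $I_e = S$, we have $f \in (SeS)^{\V}$, so $f = \V_i w_i$ for a compatible family $w_i = s_i e t_i$, and applying $\mathbf{d}$ gives $f = \mathbf{d}(f) = \V_i \mathbf{d}(w_i)$. I would then exhibit a pencil from $f$ to $e$ by setting $x_i = e t_i \mathbf{d}(w_i)$. A direct computation gives $\mathbf{r}(x_i) = e\,t_i \mathbf{d}(w_i) t_i^{-1}\,e \le e$, and, using the key inequality $\mathbf{d}(w_i) = t_i^{-1}(e\,\mathbf{d}(s_i))t_i \le t_i^{-1} e t_i$, that $\mathbf{d}(x_i) = \mathbf{d}(w_i)(t_i^{-1} e t_i)\mathbf{d}(w_i) = \mathbf{d}(w_i)$. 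Thus $\V_i \mathbf{d}(x_i) = f$ and $\V_i \mathbf{r}(x_i) \le e$, so $f \preceq e$; exchanging the roles of $e$ and $f$ (using $I_f = S$) gives $e \preceq f$, whence $e \equiv f$. This element manipulation is the main obstacle of the whole argument: the crux is the inequality $\mathbf{d}(w_i) \le t_i^{-1} e t_i$, which lets the conjugate idempotent $t_i^{-1} e t_i$ be absorbed and forces the domain of $x_i$ to be exactly $\mathbf{d}(w_i)$ rather than something strictly smaller, so that the domains still join up to all of $f$.

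For (2) $\Rightarrow$ (3), given $e \equiv f$ I would choose a pencil $X$ witnessing $e \preceq f$ and a pencil $Y$ witnessing $f \preceq e$, and set $Z = X \cup \{y^{-1} : y \in Y\}$. Since $\mathbf{d}(y^{-1}) = \mathbf{r}(y)$ and $\mathbf{r}(y^{-1}) = \mathbf{d}(y)$, the range-side inequalities of the two pencils are absorbed by the domain-side equalities, yielding $\V_{z \in Z}\mathbf{d}(z) = e$ and $\V_{z \in Z}\mathbf{r}(z) = f$ exactly. For (3) $\Rightarrow$ (1), let $J$ be a non-zero sup-ideal; it contains a non-zero idempotent $e$, and, being an ideal, it is downward closed. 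Applying (3) to $e$ and $f = e_S$ produces $Z$ with $e = \V_z \mathbf{d}(z)$ and $e_S = \V_z \mathbf{r}(z)$. Each $\mathbf{d}(z) \le e \in J$ lies in $J$ by downward closure, so $\mathbf{r}(z) = z\,\mathbf{d}(z)\,z^{-1} \in SJS \subseteq J$; as the $\mathbf{r}(z)$ are idempotents and hence a compatible family, their join $e_S$ lies in $J$, forcing $J = S$. The degenerate case $S = \{0\}$ I would dispatch separately, where all three conditions hold vacuously.
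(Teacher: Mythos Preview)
Your proof is correct and follows essentially the same cycle of implications as the paper. The only differences are cosmetic: in (1)$\Rightarrow$(2) the paper first observes that each $w_i\le f$ is automatically an idempotent and then factors it as a product of mutual inverses to obtain the pencil, whereas you write down the pencil element $x_i=et_i\mathbf d(w_i)$ directly and verify $\mathbf d(x_i)=\mathbf d(w_i)$ via the absorption $\mathbf d(w_i)\le t_i^{-1}et_i$; and in (3)$\Rightarrow$(1) you shortcut the paper's ``every non-zero idempotent lies in $J$'' to the single instance $f=e_S$, which already forces $J=S$. Both are legitimate and amount to the same argument.
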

\begin{proof} (1)$\Rightarrow$(2).
Let $e$ and $f$ be non-zero idempotents.
Then by assumption and Lemma~\ref{lem:ideal},
we have that
$S = (SeS)^{\V} = (SfS)^{\V}$.
Now, $e \in (SfS)^{\V}$.
Thus $e = \bigvee_{i \in I} e_{i}$ where $e_{i} \in SfS$.
We can write $e_{i} = e_{i}a_{i}fb_{i}e_{i}$ where $a_{i}, b_{i} \in S$.
Put $x_{i}^{-1} = e_{i}a_{i}f$ and $y_{i} = fb_{i}e_{i}$.
It is easy to check that $y_{i} = x_{i}^{-1}$ (inverses are unique in inverse semigroups).
Thus  $e = \V_{i \in I} x_{i}^{-1}x_{i}$ where
$\mathbf{r}(x_{i}) \leq f$.
It follows that $e \preceq f$.
Symmetry delivers the result.

(2)$\Rightarrow$(3). Let $X$ be a pencil from $e$ to $f$ and let $Y$ be a pencil from $f$ to $e$.
Put $Z = X \cup Y^{-1}$.
Then 
$$\V_{z \in Z} \mathbf{d}(z) = \left( \V_{x \in X} \mathbf{d}(x) \right) \vee \left( \V_{y \in Y} \mathbf{r}(y) \right) = e$$
and, by symmetry,
$$\V_{z \in Z} \mathbf{r}(z) = f.$$

(3)$\Rightarrow$(1). Let $I \neq \{0\}$ be a sup-ideal of $S$.
Since there exists $a \in I$ with $a \neq 0$, it follows that there exists an idempotent $e \in I$ where $e \neq 0$.
Let $f \in S$ be any non-zero idempotent.
Then by (3), we can find a subset $Z \subseteq S$ such that $e = \V_{z \in Z} \mathbf{d}(z)$ and $f = \V_{z \in Z} \mathbf{r}(z)$.
Every ideal is an order-ideal so that $\mathbf{d}(z) \in I$ for all $z \in Z$.
But $z = z \mathbf{d}(z)$ and so $z \in I$.
Thus we have shown that $Z \subseteq I$.
But $I$ is $\V$-closed and so $f \in I$.
It follows that $\mathsf{E}(S) \subseteq I$.
But if $s \in S$ is arbitrary, then $s = s(s^{-1}s)$.
It follows that $s \in I$ and so we have proved that $S = I$, as required. 
\end{proof}

The following lemma was actually proved in \cite{Lawson1996} but is so useful in
our calculation that we reprove it here.

\begin{lemma}\label{lem:d-relation} Let $S$ and $T$ be pseudogroups such that $T$ is a sup-enlargement of $S$.
Then each idempotent in $TST$ is $\mathscr{D}$-related to an idempotent in $S$.
\end{lemma}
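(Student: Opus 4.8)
The plan is to prove the statement by directly exhibiting, for each idempotent $i\in TST$, an element $x\in T$ that realises a $\mathscr{D}$-link from $i$ to an idempotent of $S$; concretely, an $x$ with $\mathbf{r}(x)=i$ and $\mathbf{d}(x)\in\mathsf{E}(S)$, so that $i=\mathbf{r}(x)\,\mathscr{D}\,\mathbf{d}(x)$. First I would record the two elementary features of the local pseudogroup $S=eTe$ that drive the argument: the idempotents of $S$ are exactly the idempotents of $T$ lying below $e$, and every $s\in S$ satisfies $es=s=se$ (because $\mathbf{d}(s),\mathbf{r}(s)\le e$). The decisive use of the hypothesis $i\in TST$ is that it lets me factor $i$ through $S$, namely $i=asb$ for some $a,b\in T$ and $s\in S$.

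With this factorisation fixed, I would set $x=ias$ and compute its two idempotents. For the range, $\mathbf{r}(x)=xx^{-1}=ias\,s^{-1}a^{-1}i=i\,\mathbf{r}(as)\,i$, using $i^{-1}=i$; since $i$ is idempotent we have $\mathbf{r}(i)=i$, and the left factorisation $i=(as)b$ yields $i=\mathbf{r}(i)\le\mathbf{r}(as)$ by the standard inequality $\mathbf{r}(pq)\le\mathbf{r}(p)$. Hence $i\,\mathbf{r}(as)=i$ and therefore $\mathbf{r}(x)=i$ exactly. For the domain, $\mathbf{d}(x)=x^{-1}x=s^{-1}a^{-1}ias$; inserting the identities $s^{-1}=s^{-1}e$ and $s=es$ rewrites this as $s^{-1}(ea^{-1}iae)s$, whose middle factor $ea^{-1}iae$ lies in $eTe=S$. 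Since $S$ is closed under products and inverses, $\mathbf{d}(x)\in S$, and being an idempotent of $T$ contained in $S$ it is an idempotent of $S$. This gives $i\,\mathscr{D}\,\mathbf{d}(x)$ with $\mathbf{d}(x)\in\mathsf{E}(S)$, as required.

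The step that needs the most care is securing $\mathbf{r}(x)=i$ \emph{on the nose} rather than merely $\mathbf{r}(x)\le i$: this is precisely where the left factorisation $i=(as)b$ enters, through $i\le\mathbf{r}(as)$. The complementary delicate point, landing $\mathbf{d}(x)$ back inside $S$, is exactly where the presentation $S=eTe$ is deployed, via the sandwiching $es=s=se$. I would also remark that the argument uses only that $S$ is the local pseudogroup $eTe$ together with $i\in TST$; neither condition (E2) nor Lemma~\ref{lem:arret} is needed for this particular lemma, even though it is stated within the enlargement setting.
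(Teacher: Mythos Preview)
Your proof is correct and follows essentially the same approach as the paper: both take the idempotent $i=asb\in TST$ and use the witness $x=ias$ (the paper writes $x=ets$ with $e$ for the idempotent and $t,t'$ for your $a,b$), showing $\mathbf r(x)=i$ and $\mathbf d(x)\in S$. The only notable difference is in verifying $\mathbf d(x)\in S$: the paper computes $x^{-1}x\in STS$ and invokes (E1), whereas you sandwich with the identity $e$ of $S$ to land directly in $eTe=S$; your remark that neither (E1) nor (E2) is actually needed is correct and worth keeping.
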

\begin{proof} Let $e \in TST$ be an idempotent.
Then $e = (ets)(s^{-1}st'e)$ where $s \in S$ and $t \in T$.
Put $x = ets$ and $y = s^{-1}st'e$.
Then $xyx = ex = x$ and $yxy = ye = y$.
It follows that $x$ and $y$ are inverses of each other.
Thus $e = xx^{-1}$ and $f = x^{-1}x \in STS = S$. 
\end{proof}

We can now state our first Morita invariant property.

\begin{theorem}\label{prop:simplifying-mi} Let $S$ and $T$ be Morita equivalent pseudogroups.
Then $S$ is $0$-simplifying if and only if $T$ is $0$-simplifying.
\end{theorem}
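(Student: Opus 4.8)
The plan is to reduce the statement to the behaviour of $0$-simplifying under a single sup-enlargement, and then to exploit a correspondence between the sup-ideals of a pseudogroup and those of its enlargement. Since $S$ and $T$ are Morita equivalent, Theorem~\ref{them:three} produces an $(S,T)$-equivalence bimodule, and Theorem~\ref{them:rieffel} then yields a joint sup-enlargement $U$, so that $S=eUe$ and $T=fUf$ for idempotents $e,f\in U$. It therefore suffices to prove the local statement: \emph{if $U$ is a sup-enlargement of $S=eUe$, then $S$ is $0$-simplifying if and only if $U$ is}. Applying this to both $e$ and $f$ gives the theorem.

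For the local statement I would introduce two maps between the partially ordered sets of sup-ideals, namely $\phi(I)=I\cap S$ for a sup-ideal $I$ of $U$, and $\psi(J)=(UJU)^{\V}$ for a sup-ideal $J$ of $S$; the latter is a sup-ideal by Lemma~\ref{lem:ideal}. Using that $S=eUe$ is an order-ideal of $U$ closed under compatible joins (Lemma~\ref{lem:arret}), one checks routinely that $\phi(I)$ is again an ideal that is closed under compatible joins, so both maps land among sup-ideals. The key is that these maps take non-trivial sup-ideals to non-trivial ones.

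The direction ``$U$ is $0$-simplifying $\Rightarrow$ $S$ is $0$-simplifying'' follows once I establish $\phi\bigl(\psi(J)\bigr)=J$, i.e. $(UJU)^{\V}\cap S=J$. The inclusion $\supseteq$ is immediate. For $\subseteq$, take $a\in(UJU)^{\V}\cap S$ and write $a=\V_i b_ic_id_i$ with $c_i\in J\subseteq S$; since $a=eae$ and $c_i=ec_ie$, we get $a=\V_i (eb_ie)c_i(ed_ie)$, each summand lying in $SJS\subseteq J$, so $a\in J$ because $J$ is closed under compatible joins. Consequently a non-trivial sup-ideal $J$ of $S$ gives a non-trivial $\psi(J)$ (it is nonzero as $J\subseteq\psi(J)$, and proper since $\psi(J)=U$ would force $J=\phi(U)=S$). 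Contrapositively, if $U$ has no non-trivial sup-ideals then neither does $S$.

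The converse is the main obstacle, and it is here that Lemma~\ref{lem:d-relation} does the essential work of descending from $U$ to $S$. Let $I$ be a nonzero sup-ideal of $U$ and pick $0\neq a\in I$; then $g=aa^{-1}$ is a nonzero idempotent in $I$. Using (E2) write $g=\V_i w_i$ with $w_i\in USU$; since $g$ is idempotent and $\mathbf{d},\mathbf{r}$ preserve compatible joins in a pseudogroup, $g=\mathbf{r}(g)=\V_i\mathbf{r}(w_i)$ with each $\mathbf{r}(w_i)$ a nonzero-or-zero idempotent of the ideal $USU$. As $g\neq 0$, some $\mathbf{r}(w_{i_0})$ is a nonzero idempotent of $USU$, so by Lemma~\ref{lem:d-relation} it is $\mathscr{D}$-related to a nonzero idempotent $f\in S$; choose $x\in U$ with $\mathbf{d}(x)=\mathbf{r}(w_{i_0})\le g$ and $\mathbf{r}(x)=f$. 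Because $\mathbf{d}(x)\le g\in I$ we have $x=x\,\mathbf{d}(x)\in I$, hence $f=xx^{-1}\in I\cap S$ is nonzero, so $\phi(I)\neq\{0\}$. Finally $\phi(I)=S$ would give $S\subseteq I$ and therefore $U=(USU)^{\V}\subseteq I$ by (E2); thus a non-trivial $I$ always yields a non-trivial $\phi(I)$, and $S$ being $0$-simplifying forces $U$ to be $0$-simplifying. (Alternatively, the same two descents can be phrased through the pencil criterion of Lemma~\ref{lem:simple}, showing $g\equiv e$ for every nonzero idempotent $g\in U$, but the sup-ideal bookkeeping above seems the more economical route.)
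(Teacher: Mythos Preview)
Your proof is correct. The reduction to a single sup-enlargement is the same as the paper's, and both arguments rely on Lemma~\ref{lem:d-relation} as the descent mechanism, but the implementations diverge. The paper invokes the pencil characterization of Lemma~\ref{lem:simple}: for the harder direction it first treats idempotents $e,f\in TST$, conjugating a pencil in $S$ along the $\mathscr D$-witnesses supplied by Lemma~\ref{lem:d-relation}, and then handles arbitrary idempotents of $T$ by decomposing them via (E2) into joins of idempotents in $TST$ and assembling the pencils. You instead stay with the definition of $0$-simplifying in terms of sup-ideals and set up a correspondence $\phi(I)=I\cap S$, $\psi(J)=(UJU)^{\V}$, showing that non-trivial ideals go to non-trivial ideals in both directions. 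Your computation $\phi(\psi(J))=J$ gives the easy direction cleanly, and for the hard direction you use (E2) together with the preservation of compatible joins by $\mathbf r$ to find a nonzero idempotent of $USU$ inside any nonzero sup-ideal $I$, then transport it into $S$ via Lemma~\ref{lem:d-relation}. This is a genuinely more economical bookkeeping: it avoids the pencil machinery entirely and isolates exactly the lattice-theoretic content of the invariance statement, at the cost of being slightly less constructive than the paper's explicit pencil-building.
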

\begin{proof} It is enough to prove the result under the assumption that $T$ is a sup-enlargement of $S$.
Because $S$ is an order-ideal of $T$ by Lemma~\ref{lem:arret}, it is straightforward to check that if $T$ is $0$-simplifying then $S$ is $0$-simplifying.
We prove the converse.
Let $S$ be $0$-simplifying.
We prove that $T$ is $0$-simplifying.
We begin with a special case where $e$ and $f$ are idempotents in $TST$.
We use Lemma~\ref{lem:d-relation}.
If $e \in \mathsf{E}(TST)$ then $e \stackrel{a}{\rightarrow} e'$
for some $a \in T$ where $e' \in \mathsf{E}(S)$,
and that if $f\in \mathsf{E}(TST)$ then  $f\stackrel b\rightarrow f’$ for some $b\in T$ where $f’\in E(S)$. 
By assumption, $e’\preceq f’$
and so there is a pencil $X$ from $e'$ to $f'$.
It can now be checked that $b^{-1}Xa$ is a pencil from $e$ to $f$.
Thus $e \preceq f$.
Now let $e$ and $f$ be arbitrary idempotents in $T$.
The $e = \bigvee_{i \in I} e_{i}$ and $f = \bigvee_{j \in J} f_{j}$
where $e_{i}$ and $f_{j}$ are idempotents in $TST$.
Then $e_{i} \preceq f_{j}$, by our above result.
Let $X_{ij}$ be a pencil from $e_{i}$ to $f_{j}$.
Put $X = \bigcup_{i \in I, j \in J} X_{ij}$.
Then $X$ is a pencil from $e$ to $f$ and so $e \preceq f$.
By symmetry, $f \preceq e$.
It follows that $T$ is $0$-simplifying.
\end{proof}

Recall that an inverse semigroup $S$ is said to be {\em fundamental} if the only elements that commute with
all the idempotents are the idempotents themselves.

\begin{lemma}\label{lem:fundamental} 
Suppose that $T$ is a sup-enlargement of $S$.
Then $S$ is fundamental if and only if $T$ is fundamental.
\end{lemma}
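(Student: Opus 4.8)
The plan is to exploit the tight structural relationship between $S$ and its sup-enlargement $T$, using the fact established in Lemma~\ref{lem:arret} that $S$ is an order-ideal of $T$ and that an element $t \in T$ with $\mathbf{d}(t), \mathbf{r}(t) \in S$ already lies in $S$. Since fundamentality is a statement about elements commuting with all idempotents, I would first analyse how idempotents and centralizers transfer between the two pseudogroups. A key preliminary observation is that, because $S = eTe$ for the idempotent $e = e_S$, the idempotents of $S$ are exactly those idempotents of $T$ lying below $e$, and more generally every idempotent of $T$ is a compatible join of idempotents that are $\mathscr{D}$-related into $S$ via Lemma~\ref{lem:d-relation}.

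First I would prove the easier direction, that if $T$ is fundamental then $S$ is fundamental. Suppose $s \in S$ commutes with every idempotent of $S$. I would show $s$ in fact commutes with every idempotent of $T$: given an arbitrary idempotent $g \in T$, the element $ege \in S$ is an idempotent (it lies below $e$), and conjugating/multiplying appropriately—using that $s = ese$ since $s \in eTe$—one reduces the commutation $sg = gs$ to the already-known commutation of $s$ with the $S$-idempotent $ege$. Then fundamentality of $T$ forces $s$ to be idempotent, whence $s \in \mathsf{E}(S)$, giving fundamentality of $S$.

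The harder direction is the converse: assume $S$ is fundamental and deduce $T$ is. Let $t \in T$ commute with all idempotents of $T$; the goal is to show $t$ is idempotent. The plan is to first observe that commuting with all idempotents forces $\mathbf{d}(t) = \mathbf{r}(t)$ (since $t$ commutes with $\mathbf{d}(t)$ and $\mathbf{r}(t)$, a standard inverse-semigroup computation shows these two idempotents coincide), so $t$ is a ``partial bijection fixing its domain idempotent.'' I would then cut $t$ down by idempotents from $S$: for each idempotent $f \in \mathsf{E}(S)$, the element $ftf$ lies in $S$ (its domain and range idempotents both lie below $f \le e$, so Lemma~\ref{lem:arret}(2) applies), and $ftf$ commutes with every idempotent of $S$; hence by fundamentality of $S$ each $ftf$ is idempotent. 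The main obstacle will be assembling these local pieces into the global conclusion that $t$ itself is idempotent. I would handle this by using condition (E2), $T = (TST)^{\V}$, together with the $\mathscr{D}$-relation analysis of Lemma~\ref{lem:d-relation}, to express $\mathbf{d}(t) = \mathbf{r}(t)$ as a compatible join of idempotents each of which is $\mathscr{D}$-related into $S$; restricting $t$ to each such idempotent yields an element whose domain and range idempotents are $S$-conjugate, and comparing $t$ restricted to these pieces with the known idempotent elements $ftf$ via the rigidity of the $\mathscr{D}$-relation should force each restriction of $t$ to be idempotent. Distributivity of multiplication over compatible joins then recombines these restrictions to show $t$ is idempotent.

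The step I expect to be the genuine difficulty is this recombination: controlling how $t$ behaves across different $\mathscr{D}$-blocks when its domain idempotent is not itself in $S$. The cleanest route is probably to show directly that $\mathbf{d}(t) \in S$—equivalently, that $t$'s support lies below $e$ up to the $\equiv$/pencil machinery—so that $t = \mathbf{r}(t)\, t\, \mathbf{d}(t)$ with both bracketing idempotents in $S$ forces $t \in S$ by Lemma~\ref{lem:arret}(2), at which point $t$ commutes with all $S$-idempotents and fundamentality of $S$ finishes the argument. Establishing that $\mathbf{d}(t) \in S$ for a $t$ centralizing every idempotent of $T$ is where I would concentrate the real work, and it is plausible this is exactly where the author invokes the order-ideal property together with (E2).
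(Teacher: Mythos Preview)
Your ``cleanest route'' is a genuine error: it is simply false that an element $t\in T$ centralizing all idempotents must have $\mathbf{d}(t)\in S$. Take $t=e_T$ itself (or any idempotent of $T$ not lying below $e_S$); this centralizes every idempotent but $\mathbf{d}(t)=t\notin S$ whenever $S\neq T$. So the strategy of forcing $t$ into $S$ via Lemma~\ref{lem:arret}(2) cannot work, and this is where you said you would ``concentrate the real work.'' Relatedly, your $ftf$ idea for $f\in\mathsf E(S)$ only controls $e_S t$, not $t$ itself, since $\V_{f\in\mathsf E(S)} f=e_S$ rather than $e_T$.

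By contrast, the sketch you gave just before that --- decompose $\mathbf d(t)=\mathbf r(t)$ as a compatible join of idempotents in $TST$, conjugate each restriction $tg_i$ into $S$ via Lemma~\ref{lem:d-relation}, and use fundamentality of $S$ on the conjugate --- is correct and is essentially what the paper does. The paper organizes it slightly differently and more cleanly: it first proves, as an intermediate step, that the inverse subsemigroup $TST$ is fundamental (any $a\in TST$ centralizing $\mathsf E(TST)$ has $\mathbf d(a)=\mathbf r(a)\in TST$, and one conjugates $a$ into $S$ via a witness $u$ of $\mathbf d(a)\,\mathscr D\,f\in\mathsf E(S)$; then $uau^{-1}$ centralizes $\mathsf E(S)$, hence is idempotent, hence so is $a$). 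With $TST$ known to be fundamental, one writes $t=\V_i t_i$ with $t_i\in TST$ by (E2), observes $t_i=t\,\mathbf d(t_i)$, and checks that each $t_i$ centralizes $\mathsf E(TST)$; so each $t_i$ is idempotent and therefore $t$ is. The two-step structure (first $TST$, then $T$) sidesteps exactly the ``recombination'' difficulty you flagged, and no attempt to push $t$ into $S$ is needed.
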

\begin{proof} Suppose that $T$ is fundamental.
Then $S$ is isomorphic to a local submonoid of $T$.
It is well-known (and easy to prove) that $S$ must itself be fundamental.
We therefore need only prove the converse.
Suppose that $S$ is fundamental;
we prove that $T$ is fundamental.

We prove first that $TST$ is fundamental.
Let $a \in TST$ be an element that commutes with every idempotent in $TST$.
We prove that $a$ is itself an idempotent.
Observe first that $\mathbf{d}(a) = \mathbf{r}(a) = e$ where $e \in TST$.
By Lemma~\ref{lem:d-relation}, 
there exists an idempotent $f \in S$ such that  $e \mathscr{D} f$.
Let $e \stackrel{u}{\rightarrow} f$.
Then $s = uau^{-1} \in S$.
Let $e$ be any idempotent in $S$.
Then 
$$se = uau^{-1}e = ua (u^{-1}eu)u^{-1} = u (u^{-1}eu)au^{-1} = euau^{-1}  = es.$$
It follows that $s$ is an idempotent
and so $a$ is an idempotent.

We can now prove that $T$ is fundamental.
Let $a$ be an element of $T$ that commutes with all idempotents of $T$.
We prove that $a$ is an idempotent.
By definition, $a = \V_{i \in I} t_{i}$ where $t_{i} \in TST$.
We shall prove that each $t_{i}$ is an idempotent from which it follows that $a$ is an idempotent.
Let $i \in I$.
Then $t_{i} \leq a$ and so $t_{i} = a \mathbf{d}(t_{i})$.
Let $e \in \mathsf{E}(TST)$.
Then $t_{i}e = a \mathbf{d}(t_{i})e = ae \mathbf{d}(t_{i}) = ea\mathbf{d}(t_{i}) = e t_{i}$.
But $TST$ is fundamental and so 
it follows that $t_{i}$ is an idempotent and so $a$ is an idempotent.
\end{proof}

Our second Morita invariant property is now immediate by Lemma~\ref{lem:fundamental} and Theorem~\ref{them:rieffel}.

\begin{corollary}\label{prop:fundamental} Let $S$ and $T$ be Morita equivalent pseudogroups.
Then $S$ is fundamental if and only if $T$ is fundamental.
\end{corollary}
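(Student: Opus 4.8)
The plan is to reduce Morita equivalence to the existence of a joint sup-enlargement and then transport fundamentality through the enlargement using Lemma~\ref{lem:fundamental}. Since fundamentality is the property in question and Lemma~\ref{lem:fundamental} already handles its transfer between a local submonoid and its sup-enlargement in both directions, almost all of the work has been done in advance; the corollary should follow by chaining the structural theorems.

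Concretely, I would first invoke Theorem~\ref{them:three}: from the assumption that $S$ and $T$ are Morita equivalent we obtain an $(S,T)$-equivalence bimodule. Feeding this into Theorem~\ref{them:rieffel} produces a joint sup-enlargement $U$ of $S$ and $T$, so that $U$ is simultaneously a sup-enlargement of $S$ and a sup-enlargement of $T$. This is exactly the reduction used in the proof of Theorem~\ref{prop:simplifying-mi}, where it suffices to argue under the assumption that one pseudogroup is a sup-enlargement of the other.

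With $U$ in hand, I would apply Lemma~\ref{lem:fundamental} twice: to the pair $(S,U)$ to conclude that $S$ is fundamental if and only if $U$ is fundamental, and to the pair $(T,U)$ to conclude that $T$ is fundamental if and only if $U$ is fundamental. Composing these two equivalences through the common middle term $U$ yields that $S$ is fundamental if and only if $T$ is fundamental, which is the desired statement.

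I do not expect a genuine obstacle at this level, since the analytically delicate part is already encapsulated in Lemma~\ref{lem:fundamental} --- namely the argument that an element of the enlargement commuting with all idempotents must be idempotent, which is established there by decomposing into elements of $TST$, passing to a $\mathscr{D}$-related idempotent in $S$, and conjugating into $S$ to use the fundamentality of $S$. Had that lemma not been available, this conjugation-and-descent step would have been the hard part; as it stands, the only care needed is to confirm that $U$ really is a sup-enlargement of \emph{both} $S$ and $T$, which is precisely the content of the joint sup-enlargement delivered by Theorem~\ref{them:rieffel}.
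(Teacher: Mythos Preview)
Your proposal is correct and follows essentially the same route as the paper, which simply states that the result is immediate from Lemma~\ref{lem:fundamental} and Theorem~\ref{them:rieffel}; you have merely made the (implicit) use of Theorem~\ref{them:three} and the two applications of Lemma~\ref{lem:fundamental} through the common enlargement $U$ explicit.
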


\appendix

\section*{Appendix}

\begin{proof}[Proof of Proposition~\ref{prop:presentations}]
$\mathcal P(X)$ is a left $\mathcal P(M)$-module under the pointwise action.
For all $A\subset M$ and $Y,Z\subset X$ the two residuations for the action are defined as follows:
\begin{eqnarray*}
A\backslash Z &=& \bigcup\bigl\{ W\in\mathcal P(X) : A W\subset Z\bigr\}
=\{x\in X : A x\subset Z\},\\
Z/Y &=& \bigcup\bigl\{ B\in\mathcal P(M) : B Y\subset Z\bigr\}
=\{a\in M : a Y\subset Z\}.
\end{eqnarray*}
Take for instance $Z/Y$. This coincides with the intersection
\[
\bigcap_{y\in Y}\{a\in M : ay\in Z\}.
\]
If $Z\in\langle X\vert R_X\rangle$ and $(S,T)\in R_M$ then, using the third joint stability condition,
\[
S\subset\{a\in M : ay\in Z\}\iff Sy\subset Z\iff Ty\subset Z\iff T\subset\{a\in M : ay\in Z\},
\]
and thus
$Z/Y\in \langle M\vert R_M\rangle$. Analogously we conclude that if $Z\in\langle X\vert R_X\rangle$ then $A\backslash Z\in\langle X\vert R_X\rangle$ for all $A\subset M$. Similar conclusions apply to the two residuations for the multiplication of $Q$.

Now let $j$ and $k$ be the closure operators on $\mathcal P(M)$ and $\mathcal P(X)$ which are determined by the intersection-closed subsets $\langle M\vert R_M\rangle$ and $\langle X\vert R_X\rangle$. The former is the set of fixed points of $j$, the latter is the set of fixed points of $k$, and we have two surjective sup-lattice homomorphisms
\[
j:\mathcal P(M)\to\langle M\vert R_M\rangle\quad\textrm{ and }\quad k:\mathcal P(X)\to\langle X\vert R_X\rangle.
\]
Let $A\subset M$ and $Y\subset X$. Then
\begin{eqnarray*}
AY\subset k(AY) &\iff& A\subset k(AY)/Y\iff j(A)\subset k(AY)/Y\\
&\iff& j(A)Y\subset k(AY)
\iff Y\subset j(A)\backslash k(AY)\\
&\iff& k(Y)\subset j(A)\backslash k(AY)\iff
j(A)k(Y)\subset j(AY),
\end{eqnarray*}
so the condition $j(A)k(Y)\subset k(AY)$ holds. Similarly, $j(A)j(B)\subset j(AB)$  for all $A,B\subset M$ ($j$ is a \emph{quantic nucleus}~\cite[\S 3.1]{Rosenthal}), and $j$ commutes with the involution. Hence, $\langle M\vert R_M\rangle$ has a well defined multiplication given by
\[
(A,B)\mapsto A\then B := j(AB),
\]
and, similarly, it has a well defined action on $\langle X\vert R_X\rangle$ given by
\[
(A,Z)\mapsto A\then Z :=k(AZ).
\]
So $j$ and $k$ preserve all the operations of the quantale and module structures,
\[
j(AB) = j(A)\then j(B)\quad\textrm{ and }\quad k(AZ) = j(A)\then k(Z),
\]
and, due to surjectivity, the required algebraic properties such as the associativity of the action of $\langle M\vert R_M\rangle$ on $\langle X\vert R_X\rangle$ are inherited from the corresponding properties of $\mathcal P(M)$ and $\mathcal P(X)$.

Now let $\Xi$ be a left $\langle M\vert R_M\rangle$-module, and $f:X\to\Xi$ an $M$-equivariant map that respects $R_X$. Due to the sup-lattice universal property of $\langle X\vert R_X\rangle$ there is a unique sup-lattice homomorphism $f^\sharp:\langle M\vert R_X\rangle\to\Xi$ such that $f^\sharp\circ\eta_X=f$, defined by
\[
f^\sharp(J) = \V_{x\in J} f(x).
\]
The $M$-equivariance of $f$ translates to left equivariance of $f^\sharp$ with respect to a set of generators of $\langle M\vert R_M\rangle$ and a set of generators of $\langle X\vert R_X\rangle$, and makes $f^\sharp$ a homomorphism of $\langle M\vert R_M\rangle$-modules as required. 
\end{proof}


\end{document}